\theoremstyle{plain}
\newtheorem{theorem}{Theorem}[section]
\newtheorem{lemma}[theorem]{Lemma}
\newtheorem{proposition}[theorem]{Proposition}
\newtheorem{corollary}[theorem]{Corollary}
\theoremstyle{definition}
\newtheorem{definition}[theorem]{Definition}
\newtheorem{example}[theorem]{Example}
\newtheorem{assumption}[theorem]{Assumption}
\theoremstyle{remark}
\newtheorem{remark}[theorem]{Remark}
\newlist{tfae}{enumerate}{1}%
\setlist[tfae,1]{label=(\roman*)}%
\newcommand{\fv}{\mathfrak{v}}
\newcommand{\fw}{\mathfrak{w}}
\newcommand{\fx}{\mathfrak{x}}
\newcommand{\fy}{\mathfrak{y}}
\DeclareMathOperator{\upc}{\uparrow\!}
\DeclareMathOperator{\downc}{\downarrow\!}
\DeclareMathOperator{\CoAlg}{CoAlg}
\newcommand{\catA}{\catfont{A}}
\newcommand{\catC}{\catfont{C}}
\newcommand{\catI}{\catfont{I}}
\newcommand{\catX}{\catfont{X}}
\newcommand{\SET}{\catfont{Set}}
\newcommand{\ORD}{\catfont{Ord}}
\newcommand{\MET}{\catfont{Met}}
\newcommand{\TOP}{\catfont{Top}}
\newcommand{\PRIEST}{\catfont{Priest}}
\newcommand{\COMPHAUS}{\catfont{CompHaus}}
\newcommand{\STCOMP}{\catfont{StablyComp}}
\newcommand{\POSCH}{\catfont{PosComp}}
\newcommand{\ORDCH}{\catfont{OrdCH}}
\newcommand{\Rels}[1]{#1\text{-}\catfont{Rel}}
\newcommand{\Cats}[1]{#1\text{-}\catfont{Cat}}
\newcommand{\CatCHs}[1]{#1\text{-}\catfont{CatCH}}
\newcommand{\Priests}[1]{{#1\text{-}\catfont{Priest}}}
\newcommand{\op}{\mathrm{op}}
\newcommand{\sep}{\mathrm{sep}}
\newcommand{\ch}{\mathrm{ch}}
\newcommand{\fin}{\mathrm{fin}}
\newcommand{\sym}{\mathrm{sym}}
\newcommand{\two}{\catfont{2}}
\newcommand{\V}{\mathcal{V}}
\newcommand{\catfont}[1]{\mathsf{#1}}
\newcommand{\relto}{\mathrel{\mathmakebox[\widthof{$\xrightarrow{\rule{1.45ex}{0ex}}$}]{\xrightarrow{\rule{1.45ex}{0ex}}\hspace*{-2.4ex}{\mapstochar}\hspace*{1.8ex}}}}
\newcommand{\mate}[1]{\,^\ulcorner\! #1^\urcorner}
\newcommand{\ftD}{\functorfont{D}}
\newcommand{\ftF}{\functorfont{F}}
\newcommand{\ftG}{\functorfont{G}}
\newcommand{\ftH}{\functorfont{H}}
\newcommand{\ftI}{\functorfont{I}}
\newcommand{\ftL}{\functorfont{L}}
\newcommand{\ftP}{\functorfont{P}}
\newcommand{\ftR}{\functorfont{R}}
\newcommand{\ftS}{\functorfont{S}}
\newcommand{\ftT}{\functorfont{T}}
\newcommand{\ftU}{\functorfont{U}}
\newcommand{\ftV}{\functorfont{V}}
\newcommand{\ftId}{\functorfont{Id}}
\newcommand{\ftII}[1]{\catfont{|}#1\catfont{|}}
\newcommand{\ftbF}{\functorfont{\bar{F}}}
\newcommand{\ftbU}{\functorfont{\bar{U}}}
\newcommand{\ftbT}{\functorfont{\bar{T}}}
\newcommand{\Up}{\functorfont{Up}}
\newcommand{\functorfont}{\mathsf}
\newcommand{\mT}{\monadfont{T}}
\newcommand{\mH}{\monadfont{H}}
\newcommand{\mU}{\monadfont{U}}
\newcommand{\monadfont}[1]{\mathbbm{#1}}
\DeclareMathAlphabet{\mathpzc}{OT1}{pzc}{m}{it}
\DeclareMathOperator{\coyoneda}{\mathpzc{h}}
\DeclareMathOperator{\coyonmult}{\mathpzc{w}}
\newcommand{\monad}{(\ftT,m,e)}
\newcommand{\umonad}{(\ftU,m,e)}
\newcommand{\hmonad}{(\ftH,\coyonmult,\coyoneda)}
\newcommand{\N}{\field{N}}
\newcommand{\field}[1]{\mathds{#1}}
\newcommand{\df}[1]{\emph{\textbf{#1}}}
\newcommand\verticalprec{\mathrel{\rotatebox[origin=c]{90}{$\prec$}}}
\DeclareMathOperator{\eupc}{\verticalprec\!}
\newcommand\twoheaddownarrow{\mathrel{\rotatebox[origin=c]{-90}{$\twoheadrightarrow$}}}
\DeclareMathOperator{\tadown}{\twoheaddownarrow}
\DeclareMathOperator{\tbdown}{\Downarrow}
\DeclareFontFamily{U}{mathb}{\hyphenchar\font45}
\DeclareFontShape{U}{mathb}{m}{n}{
<-6> mathb5 <6-7> mathb6 <7-8> mathb7
<8-9> mathb8 <9-10> mathb9
<10-12> mathb10 <12-> mathb12
}{}
\DeclareSymbolFont{mathb}{U}{mathb}{m}{n}
\DeclareMathSymbol{\llcurly}{\mathrel}{mathb}{"CE}
\DeclareMathSymbol{\ggcurly}{\mathrel}{mathb}{"CF}
\newcommand{\ta}{\ggcurly} 
\DeclareMathOperator{\OLB}{L}
\DeclareMathOperator{\ORB}{R}
\newcommand{\monadb}{(\ftbT,m,e)}
\newcommand{\mbT}{\monadfont{\bar{T}}}
\begin{document}

\title{Hausdorff coalgebras}

\author{Dirk Hofmann and Pedro Nora%
  \thanks{Research partially supported by Funda\c{c}\~ao para a Ci\^encia e a
    Tecnologia (FCT), within projects UID/MAT/04106/2019 (CIDMA),
    POCI-01-0145-FEDER-030947 and by the ERDF – European Regional Development
    Fund through the Operational Programme for Competitiveness and
    Internationalisation - COMPETE 2020 Programme.}}

\publishers{\small{Center for Research and Development in Mathematics and
    Applications, Department of Mathematics, University of Aveiro, Portugal.

    HASLab INESC TEC - Institute for Systems and Computer Engineering,
    Technology and Science, University of Minho, Portugal.

  \texttt{dirk@ua.pt},\quad \texttt{pedro.top.nora@gmail.com}}}

\date{}

\maketitle

\begin{abstract}
  As composites of constant, (co)product, identity, and powerset functors,
  Kripke polynomial functors form a relevant class of $\SET$-functors in the
  theory of coalgebras. The main goal of this paper is to expand the theory of
  limits in categories of coalgebras of Kripke polynomial functors to the
  context of quantale-enriched categories.  To assume the role of the powerset
  functor we consider ``powerset-like'' functors based on the Hausdorff
  $\V$-category structure. As a starting point, we show that for a lifting of a
  $\SET$-functor to a topological category $\catX$ over $\SET$ that commutes
  with the forgetful functor, the corresponding category of coalgebras over
  $\catX$ is topological over the category of coalgebras over $\SET$ and,
  therefore, it is ``as complete'' but cannot be ``more complete''. Secondly,
  based on a Cantor-like argument, we observe that Hausdorff functors on
  categories of quantale-enriched categories do not admit a terminal
  coalgebra. Finally, in order to overcome these ``negative'' results, we
  combine quantale-enriched categories and topology \emph{à la} Nachbin. Besides
  studying some basic properties of these categories, we investigate
  ``powerset-like'' functors which simultaneously encode the classical Hausdorff
  metric and Vietoris topology and show that the corresponding categories of
  coalgebras of ``Kripke polynomial'' functors are (co)complete.
\end{abstract}

\tableofcontents

\section{Introduction}

Starting with early studies in the nineties to the introduction of uniform
notions of \emph{behavioural metric} in the last decade the study of coalgebras
over metric-like spaces has focused on four specific areas:
\begin{enumerate}
\item liftings of functors from the category $\SET$ of sets and functions to
  categories of metric spaces (see \cite{BBKK18,BK16,BKV19}), as a way of
  lifting state-based transition systems into transitions systems over
  categories of metric spaces;
\item results on the existence of terminal coalgebras and their computation (see
  \cite{TR98,BBKK18}), as a way of calculating the behavioural distance of two
  given states of a transition system;
\item the introduction of behavioural metrics with corresponding ``Up-To
  techniques'' (see \cite{BKP18,BBKK18,vBHMW05}), as a way of easing the
  calculation of behavioural distances;
\item and the development of coalgebraic logical foundations over metric spaces
  (see \cite{BK16,KM18,WSPK18}), as a way of reasoning in a quantitive way about
  transition systems.
\end{enumerate}

In this paper we focus on the first two topics, with particular interest in
metric versions of \emph{Kripke polynomial functors}. As composites of constant,
(co)product, identity, and powerset functors, Kripke polynomial functors form a
pertinent class of $\SET$-functors in the theory of coalgebras (for example,
see~\cite{Rut00a}, \cite{BRS09} and \cite{KKV04}), which is well-behaved in
regard to the existence of limits in their respective categories of coalgebras
--- assuming that the powerset functor is submitted to certain cardinality
restrictions. The latter constraint is essential since the powerset functor
$\ftP\colon\SET\to\SET$ does not admit a terminal coalgebra; a well-known fact
which follows from the following:
\begin{itemize}
\item in \cite{Lam68a} it is shown that the terminal coalgebra of a functor
  $\ftF \colon\catC\to\catC$ is a fixed point of $\ftF$, and
\item in \cite{Can91} it is (essentially) proven that the powerset functor
  $\ftP \colon \SET\to\SET$ does not have fixed points.
\end{itemize}
On the other hand, being accessible, the finite powerset functor
$\ftP_\fin\colon\SET\to\SET$ does admit a terminal coalgebra (see \cite{Bar93});
in fact, the category of coalgebras for $\ftP_\fin\colon\SET\to\SET$ is
complete. Metric counterparts of the powerset functor are often based on the
Hausdorff metric, informally, we call them \emph{Hausdorff functors}. This
metric was originally introduced in \cite{Hau14,Pom05} (see also \cite{BT06}),
and, recently, has been considered in the more general context of quantale
enriched categories (see \cite{ACT10,Stu10}) in which we discuss the results
presented here.

A common theme of the papers \cite{BBKK18} and \cite{BKV19} mentioned in the
first point above is that the authors study liftings of $\SET$-functors to
categories of metric spaces, or more generally to the category $\Cats{\V}$ of
$\V$-categories and $\V$-functors, in the sense that the diagram

\begin{center}
  \begin{tikzcd}[ampersand replacement=\&]
    \Cats{\V} \ar[r,"\ftbT"]\ar[d,""'] \& \Cats{\V} \ar[d,""] \\
    \SET \ar[r,"\ftT"'] \& \SET
  \end{tikzcd}
\end{center}

commutes.  In Section~\ref{sec:gener-cons} we show that, for such a lifting of a
$\SET$-functor, the corresponding category of coalgebras over $\Cats{\V}$ is
topological over the category of coalgebras over $\SET$ (see Theorem~\ref{p:3}).
This implies that it is possible to recast over $\Cats{\V}$ all the theory about
limits in categories of Kripke polynomial coalgebras over $\SET$. However, this
result also highlights that ``adding a $\V$-category structure'' does not
improve the situation regarding limits by itself. In particular, the Hausdorff
functor that considers all subsets of a metric space does not admit a terminal
coalgebra.

Besides cardinal restrictions, another way to ``tame'' the powerset functor is
to equip a set with some kind of structure and then consider only its
``structure relevant'' subsets. This is precisely the strategy employed in
\cite{HNN19} where we passed from Kripke polynomial functors to \emph{Vietoris
  polynomial functors} on categories of topological spaces. For instance, it is
implicitly shown in \cite[page 245]{Eng89} that the classic Vietoris functor on
the category of compact Hausdorff spaces and continuous maps has a terminal
coalgebra, and this result generalises to all topological spaces when
considering the compact Vietoris functor on $\TOP$ which sends a space to its
hyperspace of compact subsets (see \cite{HNN19} for details). This fact might
not come as a surprise for the reader thinking of compactness as ``generalised
finiteness''; however, it came as a surprise to us to learn that the lower
Vietoris functor on $\TOP$, where one considers all closed subsets, also admits
a terminal coalgebra.

Motivated by the fact that finite topological spaces correspond precisely to
finite ordered sets, over the past decades several results about topological
spaces have been inspired by their finite counterparts; for a sequence of
results see for instance \cite{JS02,JS02a,CH02}. One therefore might wonder if
the result regarding the lower Vietoris functor on $\TOP$ has an order-theoretic
counterpart; in other words, does the upset functor $\Up \colon\ORD\to\ORD$
admit a terminal coalgebra? The answer is negative, as it follows from the
``generalized Cantor Theorem'' of \cite{DG62}.%
\footnote{We thank Adriana Balan for calling our attention to \cite{DG62}.}
Based on \cite{DG62}, in Section~\ref{sec:Hpf} we generalise Cantors Theorem
further (see Theorem~\ref{d:thm:1}) and use this result to show that the
(non-symmetric) Hausdorff functor on $\Cats{\V}$ -- sending a metric space to
the space of all ``up-closed'' subsets -- does not admit a terminal coalgebra.

To overcome these ``negative results'' regarding completeness of categories of
coalgebras, in Section~\ref{sec:hausd-polyn-funct} we add a topological
component to the $\V$-categorical setting. More specifically, we introduce the
Hausdorff construction for $\V$-categories equipped with a compatible compact
Hausdorff topology. We note that these \emph{$\V$-categorical compact Hausdorff
  spaces} are already studied in \cite{Tho09,HR18}, being the corresponding
category denoted here by $\CatCHs{\V}$. Also, we find it worthwhile to notice
that the notion of $\V$-categorical compact Hausdorff space generalises
simultaneously Nachbin's ordered compact Hausdorff spaces \cite{Nac50} and the
classic notion of compact metric space; therefore, it provides a framework to
combine and even unify both theories. For example:
\begin{itemize}
\item It is known that the specialisation order of a sober space is directed
  complete (see \cite[Lemma~II.1.9]{Joh86}); in \cite{HR18} we observed that
  this fact implies immediately that the order relation of an ordered compact
  Hausdorff space is directed complete. Furthermore, an appropriate version of
  this result in the quantale-enriched setting implies that the metric of a
  metric compact Hausdorff space (i.e.\ a metric space with a \emph{compatible}
  compact Hausdorff topology) is Cauchy complete, generalising the classical
  fact that a compact metric space (i.e.\ a metric space where the
  \emph{induced} topology is compact) is Cauchy complete.
\item The Hausdorff functor $\ftH \colon \CatCHs{\V}\to \CatCHs{\V}$ introduced
  in Section~\ref{sec:hausd-polyn-funct} combines the Vietoris topology and the
  Hausdorff metric; in particular, for a metric compact Hausdorff space, the
  Hausdorff metric is compatible with the Vietoris topology
  (Proposition~\ref{d:prop:6}). This result represents a variation of the
  classic fact stating that, for every compact metric space $X$, the Hausdorff
  metric induces the Vietoris topology of the compact Hausdorff space $X$ (see
  \cite{Mic51}).
\end{itemize}
By ``adding topology'', and under some assumptions on the quantale $\V$, we are
able to show that $\ftH \colon \CatCHs{\V}\to \CatCHs{\V}$ preserves codirected
limits (see Theorem~\ref{d:thm:5}); which eventually allows us to conclude that,
for every Hausdorff polynomial functor on $\CatCHs{\V}$, the corresponding
category of coalgebras is complete (see Theorem~\ref{d:thm:6}).

In the last part of this paper we consider a $\V$-categorical counterpart of the
notion of a Priestley space. In \cite{HN18} we developed already ``Stone-type''
duality theory for these type of spaces; here we show that
$\ftH \colon \CatCHs{\V}\to \CatCHs{\V}$ sends Priestley spaces to Priestley
spaces, generalising a well-known fact of the Vietoris functor on the category
of partially ordered compact spaces. Consequently, many results regarding
coalgebras for $\ftH \colon \CatCHs{\V}\to \CatCHs{\V}$ are valid for its
restriction to Priestley spaces as well.

\textbf{Acknowledgement}. We are grateful to Renato Neves for many fruitful
discussions on the topic of the paper, without his input this work would not
exist.

\section{Strict functorial liftings}
\label{sec:gener-cons}

The main motif of this work is to expand the study of limits in categories of
coalgebras of Kripke polynomial functors to the context of quantale-enriched
categories. In more general terms, this means that given an endofunctor $\ftF$
on a category $\catA$ and a faithful functor $\ftU \colon \catX \to \catA$, our
problem consists in studying a ``lifting'' of $\ftF$ to an endofunctor $\ftbF$
on $\catX$.  In a strict sense, by ``lifting'' we mean that the diagram

\begin{equation}
  \label{p:28}
  \begin{tikzcd}[row sep=large, column sep=large,ampersand replacement=\&]
    \catX \ar[r,"\ftbF"] \ar[d,"\ftU"'] \& \catX \ar[d,"\ftU"] \\
    \catA \ar[r,"\ftF"'] \& \catA
  \end{tikzcd}
\end{equation}

commutes.

\begin{remark}
  If in \eqref{p:28} the functor $\ftbF$ has a fix-point, then so has
  $\ftF$. Hence, if $\ftF$ does not have a fix-point, then neither does
  $\ftbF$. In particular, any strict lifting of the powerset functor
  $\ftP \colon\SET\to\SET$ does not admit a terminal coalgebra.
\end{remark}

Then, we obtain a faithful functor

\begin{equation*}
  \ftbU \colon \CoAlg(\ftbF) \to \CoAlg(\ftF)
\end{equation*}

by ``applying $\ftU$''.  In \cite[Theorem~3.11]{HNN19} we showed under
additional conditions that, if the forgetful functor
$\ftU \colon \catX \to \catA$ is topological, then so is the functor
$\ftbU \colon \CoAlg(\ftbF) \to \CoAlg(\ftF)$. We start by improving upon this
result.

In the remainder of this section, let $\ftU \colon \catX \to \catA$ be a
topological functor, for more information we refer to \cite{AHS90}. We recall
that $\catX$ is fibre-complete, and for an object $A$ of $\catA$ we use the
suggestive notation $(A,\alpha)$ to denote an element of the fiber of $A$. Then
we write $\alpha \leq \beta$ if $ 1_A \colon (A, \alpha) \to (A,\beta)$ is a
morphism of $\catX$. Since we also assume the existence of functors
$\ftF \colon \catA \to \catA$ and $\ftbF \colon \catX \to \catX$ such that the
diagram~\eqref{p:28} commutes, with a slight abuse of notation, we often write
$(\ftF A,\ftF\alpha)$ instead of $\ftbF(A,\alpha)$.

For a $\ftU$-structured arrow $f \colon A \to \ftU(B, \beta)$ in $\catA$, we
denote by $(A, f_{\beta}^\triangleleft)$ the corresponding $\ftU$-initial lift.
Similarly, for $f \colon \ftU(A,\alpha) \to B$ in $\catA$, we denote by
$(B, f_{\alpha}^\triangleright)$ the corresponding $\ftU$-final lift. Below we
collect some well-known facts.

\begin{proposition}
  Let $f \colon A \to B$ be a morphism in $\catA$ and $(A, \alpha)$ and
  $(B, \beta)$ be objects in the fibres of $A$ and $B$, respectively. Then the
  following assertions are equivalent.
  \begin{tfae}
  \item $f \colon (A, \alpha) \to (B, \beta)$ is a morphism in $\catX$.
  \item $\alpha \leq f_\beta^\triangleleft$.
  \item $f_\alpha^\triangleright \leq \beta$.
  \end{tfae}
\end{proposition}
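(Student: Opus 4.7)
The plan is to prove (i)$\Leftrightarrow$(ii) and (i)$\Leftrightarrow$(iii) directly from the universal properties of $\ftU$-initial and $\ftU$-final lifts, each time specialising the universal property to the identity morphism on the relevant underlying object.

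For (i)$\Leftrightarrow$(ii), I would first recall that by definition of the $\ftU$-initial lift, $f \colon (A,f_\beta^\triangleleft) \to (B,\beta)$ is a morphism in $\catX$, and for every object $(C,\gamma)$ and every $\catA$-morphism $g \colon C \to A$ such that $f\circ g \colon (C,\gamma) \to (B,\beta)$ lies in $\catX$, the map $g \colon (C,\gamma) \to (A,f_\beta^\triangleleft)$ also lies in $\catX$. For the forward implication, assume (i) and apply this universal property with $C=A$, $\gamma=\alpha$ and $g=1_A$: then $1_A \colon (A,\alpha) \to (A,f_\beta^\triangleleft)$ is a morphism in $\catX$, which is precisely $\alpha\leq f_\beta^\triangleleft$. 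For the converse, compose $1_A \colon (A,\alpha)\to (A,f_\beta^\triangleleft)$ with the initial lift $f \colon (A,f_\beta^\triangleleft)\to (B,\beta)$ to obtain (i).

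The equivalence (i)$\Leftrightarrow$(iii) is perfectly dual, using the universal property of the final lift $(B,f_\alpha^\triangleright)$: the arrow $f \colon (A,\alpha)\to (B,f_\alpha^\triangleright)$ is a morphism in $\catX$, and any $\catA$-morphism $h \colon B \to D$ with $h\circ f \colon (A,\alpha)\to (D,\delta)$ in $\catX$ yields $h \colon (B,f_\alpha^\triangleright)\to (D,\delta)$ in $\catX$. Assuming (i), take $D=B$, $\delta=\beta$, $h=1_B$ to conclude $f_\alpha^\triangleright\leq\beta$; conversely, compose $f \colon (A,\alpha)\to (B,f_\alpha^\triangleright)$ with $1_B \colon (B,f_\alpha^\triangleright)\to (B,\beta)$ to recover (i).

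There is no real obstacle here: the statement is essentially a reformulation of the defining universal properties of initial and final lifts, and the only ``trick'' is to instantiate these universal properties at the identity morphism on $A$ (respectively $B$). The slight abuse of notation in the fibre-order ($\alpha\leq\gamma$ meaning $1_A \colon (A,\alpha)\to (A,\gamma) \in \catX$) is what makes the translation between ``being a morphism in $\catX$'' and an inequality in the fibre completely mechanical.
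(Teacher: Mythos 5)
Your proof is correct and is exactly the standard argument; the paper itself offers no proof, listing this proposition among ``well-known facts'' about topological functors, and the intended justification is precisely the instantiation of the universal properties of initial and final lifts at the identity morphisms that you carry out.
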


\begin{proposition}
  \label{p:0}
  Let $(A,\alpha)$ and $(A,\beta)$ be objects in the fibre of an object $A$ of
  $\catA$. If $\alpha \leq \beta$ then $\ftF\alpha \leq \ftF\beta$.
\end{proposition}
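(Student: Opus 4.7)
The plan is to use functoriality of $\ftbF$ together with the strict commutativity of the diagram~\eqref{p:28}. By the definition of $\alpha \leq \beta$ stated just above, the hypothesis means precisely that $1_A \colon (A,\alpha) \to (A,\beta)$ is a morphism of $\catX$.

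Applying the functor $\ftbF$ to this morphism produces a morphism
\[
  \ftbF(1_A) \colon \ftbF(A,\alpha) \longrightarrow \ftbF(A,\beta)
\]
in $\catX$. Because the square~\eqref{p:28} commutes strictly, we have
\[
  \ftU(\ftbF(1_A)) \;=\; \ftF(\ftU(1_A)) \;=\; \ftF(1_A) \;=\; 1_{\ftF A},
\]
and the notational convention $\ftbF(A,\gamma) = (\ftF A,\ftF\gamma)$ identifies the source and target of $\ftbF(1_A)$ with $(\ftF A,\ftF\alpha)$ and $(\ftF A,\ftF\beta)$ respectively. Since $\ftU$ is faithful and $\ftbF(1_A)$ lies over $1_{\ftF A}$, the morphism $\ftbF(1_A)$ is itself the identity carrier $1_{\ftF A}$ viewed as an arrow $(\ftF A,\ftF\alpha) \to (\ftF A,\ftF\beta)$ in $\catX$. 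By the definition of the fibre order this is exactly the statement $\ftF\alpha \leq \ftF\beta$.

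There is no serious obstacle here: the argument is pure unwinding of definitions, and the only substantive ingredient is the strict commutativity of~\eqref{p:28}, which is exactly what makes the carrier of $\ftbF(1_A)$ equal to $1_{\ftF A}$ rather than merely isomorphic to it.
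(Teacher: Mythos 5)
Your proof is correct: the paper states Proposition~\ref{p:0} without proof, listing it among ``well-known facts'', and your argument --- applying $\ftbF$ to the $\catX$-morphism $1_A \colon (A,\alpha) \to (A,\beta)$ and using the strict commutativity of~\eqref{p:28} to see that $\ftbF(1_A)$ lies over $1_{\ftF A}$ --- is exactly the standard justification the authors have in mind. The only cosmetic quibble is that the final step needs nothing more than the existence of an $\catX$-morphism over $1_{\ftF A}$ with the stated domain and codomain, which you already have; the appeal to faithfulness of $\ftU$ is harmless but not really what is doing the work.
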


\begin{proposition}
  \label{p:1}
  Let $c \colon A \to \ftF A$ be a morphism in $\catA$ and let $\mathcal{A}$ be
  a collection of objects $(A,\alpha)$ in the fibre of $A$ such that
  $c \colon (A, \alpha) \to (\ftF A, \ftF\alpha)$ is in $\catX$. Let
  $(A,\alpha_c)$ be the supremum of $\mathcal{A}$. Then,
  $c \colon (A, \alpha_c) \to (\ftF A, \ftF\alpha_c)$ is a morphism of $\catX$.
\end{proposition}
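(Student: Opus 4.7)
The plan is to characterise membership of $c$ in $\catX$ via the universal property of the supremum $\alpha_c$ in the fibre over $A$. For a topological functor, the supremum of a family of fibre-objects is obtained as the $\ftU$-final lift of the family of identity-structured arrows $\{1_A\colon (A,\alpha)\to A\}_{\alpha\in\mathcal{A}}$. A standard consequence is the following universal test: a morphism $g\colon A\to C$ of $\catA$ lifts to a morphism $(A,\alpha_c)\to(C,\gamma)$ of $\catX$ if and only if, for every $\alpha\in\mathcal{A}$, the composite $g\circ 1_A=g$ is a morphism $(A,\alpha)\to(C,\gamma)$ of $\catX$.

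I would then apply this criterion with $g=c$ and $(C,\gamma)=(\ftF A,\ftF\alpha_c)$. Thus it suffices to verify, for each $\alpha\in\mathcal{A}$, that $c\colon (A,\alpha)\to (\ftF A,\ftF\alpha_c)$ lies in $\catX$. By hypothesis, $c\colon(A,\alpha)\to(\ftF A,\ftF\alpha)$ is already a morphism of $\catX$. Since $\alpha\leq\alpha_c$ by definition of the supremum, Proposition~\ref{p:0} yields $\ftF\alpha\leq\ftF\alpha_c$, which, unpacking the definition of the order on the fibre, means that $1_{\ftF A}\colon(\ftF A,\ftF\alpha)\to(\ftF A,\ftF\alpha_c)$ is in $\catX$. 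Composing these two $\catX$-morphisms produces the desired $c\colon(A,\alpha)\to(\ftF A,\ftF\alpha_c)$, and the universal property of $\alpha_c$ then upgrades this to $c\colon(A,\alpha_c)\to(\ftF A,\ftF\alpha_c)$.

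The argument rests entirely on two standard ingredients: the universal property of suprema in fibres of a topological functor, and the monotonicity of $\ftF$ on fibres granted by Proposition~\ref{p:0}. I do not foresee a substantial obstacle; the only point that requires care is the realisation that one cannot directly compare $\alpha_c$ with $\ftF\alpha_c$, since no relationship between them is assumed. Instead one must test the candidate lift of $c$ against each $\alpha\in\mathcal{A}$ separately, and then absorb the discrepancy between $\ftF\alpha$ and $\ftF\alpha_c$ via the monotonicity step.
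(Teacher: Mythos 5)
Your proof is correct and follows essentially the same route as the paper: both arguments rest on the final-lift (universal) property of the supremum $\alpha_c$ in the fibre together with the fibre-monotonicity of $\ftF$ from Proposition~\ref{p:0}. The only cosmetic difference is that the paper factors $c$ through the supremum $\bigvee\{(\ftF A,\ftF\alpha)\}$ in the codomain fibre and applies monotonicity once at the end, whereas you apply it to each $\alpha\in\mathcal{A}$ separately before invoking the universal property.
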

\begin{proof}
  First note that
  \begin{equation*}
    (A,\alpha_c)
    \xrightarrow{\quad c\quad}\bigvee\{(\ftF A,\ftF\alpha)\mid(X,\alpha)\in\mathcal{A}\}
  \end{equation*}
  is a morphism $\catX$, and, by Proposition~\ref{p:0}, so is
  \begin{equation*}
    \bigvee\{(\ftF A,\ftF\alpha)\mid(X,\alpha)\in\mathcal{A}\}
    \xrightarrow{\quad 1_{\ftF A}\quad} (\ftF A,\ftF \alpha_c).\qedhere
  \end{equation*}
\end{proof}

\begin{theorem}
  \label{p:3}
  The functor $\ftU \colon \CoAlg(\ftbF) \to \CoAlg(\ftF)$ is topological.
\end{theorem}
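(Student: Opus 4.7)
I would prove this by unpacking what it means for $\ftbU$ to be topological and then building the initial lift explicitly as a supremum in the fibre over $A$, essentially combining the topologicity of $\ftU$ with Proposition~\ref{p:1}.

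First I would fix a $\ftbU$-structured source
$\bigl(f_i \colon (A,c) \to \ftbU((A_i,\alpha_i),c_i)\bigr)_{i \in I}$
in $\CoAlg(\ftF)$ and consider the collection
\[
  \mathcal{A} = \{\alpha \text{ in the fibre of } A \mid
  c\colon (A,\alpha)\to(\ftF A,\ftF\alpha) \text{ is in } \catX
  \text{ and } f_i\colon (A,\alpha)\to(A_i,\alpha_i) \text{ is in } \catX
  \text{ for all } i\in I\}.
\]
Using fibre-completeness I would set $\alpha_c = \bigvee \mathcal{A}$. Condition~(a) survives the supremum thanks to Proposition~\ref{p:1}. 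For condition~(b), each $\alpha \in \mathcal{A}$ satisfies $\alpha \leq (f_i)_{\alpha_i}^{\triangleleft}$, so the supremum satisfies $\alpha_c \leq (f_i)_{\alpha_i}^{\triangleleft}$ as well, and hence $f_i\colon(A,\alpha_c)\to(A_i,\alpha_i)$ is in $\catX$. Thus $\alpha_c$ is the largest element of $\mathcal{A}$, which gives the candidate lift $\bigl((A,\alpha_c),c\bigr)$ in $\CoAlg(\ftbF)$ and makes all $f_i$ into coalgebra morphisms.

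Next I would verify $\ftbU$-initiality. Suppose $((B,\beta),d)$ is a $\ftbF$-coalgebra and $h\colon(B,d)\to(A,c)$ is a morphism in $\CoAlg(\ftF)$ such that every $f_i\circ h\colon(B,\beta)\to(A_i,\alpha_i)$ lies in $\catX$; the goal is to show that $h\colon(B,\beta)\to(A,\alpha_c)$ is in $\catX$. The right move is to pass to the $\ftU$-final lift $(A,h_\beta^\triangleright)$ and show $h_\beta^\triangleright \in \mathcal{A}$, which by maximality of $\alpha_c$ forces $h_\beta^\triangleright \leq \alpha_c$, i.e.\ $h\colon(B,\beta)\to(A,\alpha_c)$ is a morphism of $\catX$. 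For clause~(b) of $\mathcal{A}$, the universal property of the final lift applied to $f_i\circ h$ immediately yields $f_i\colon(A,h_\beta^\triangleright)\to(A_i,\alpha_i)$ in $\catX$. For clause~(a), I would use that $h\colon(B,\beta)\to(A,h_\beta^\triangleright)$ is in $\catX$ together with functoriality of $\ftbF$ to get $\ftF h\colon(\ftF B,\ftF\beta)\to(\ftF A,\ftF h_\beta^\triangleright)$ in $\catX$; composing with $d$ and using the coalgebra equation $c\circ h = \ftF h\circ d$ shows $c\circ h\colon(B,\beta)\to(\ftF A,\ftF h_\beta^\triangleright)$ is in $\catX$, so by the universal property of the final lift $c\colon(A,h_\beta^\triangleright)\to(\ftF A,\ftF h_\beta^\triangleright)$ is in $\catX$.

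Uniqueness of the lift is automatic from faithfulness of $\ftU$ (any two lifts have the same underlying data and the same coalgebra structure, so the above argument applied in both directions forces them to coincide). The step I expect to carry the real weight is the verification of clause~(a) for $h_\beta^\triangleright$, because it is the only place where one genuinely has to commute the endofunctor $\ftbF$ past the final lift; however, the strict commutativity of diagram~\eqref{p:28} means $\ftbF h$ and $\ftF h$ are literally the same arrow, so the argument reduces to a clean composition-and-universality chase rather than any subtler functor-lifting issue.
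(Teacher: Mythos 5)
Your proposal is correct and follows essentially the same route as the paper: you form the same supremum $\alpha_c$ over the fibre (your membership condition ``$f_i$ is an $\catX$-morphism'' is equivalent to the paper's ``$\alpha \le f_{\alpha_i}^\triangleleft$''), invoke Proposition~\ref{p:1} for closure of the coalgebra condition under the supremum, and establish initiality by showing that $h_\beta^\triangleright$ satisfies both defining clauses via the universal property of the final lift and the coalgebra equation $c\cdot h = \ftF h\cdot b$. The only additions are minor explicit bookkeeping (maximality of $\alpha_c$ in the collection, uniqueness from amnesticity) that the paper leaves implicit.
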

\begin{proof}
  Let $(A_i,\alpha_i, c_i)_{i \in I}$ be a family of objects in $\CoAlg(\ftbF)$,
  and $(f_i \colon (A, c) \to (A_i, c_i))_{i \in I}$ a cone in $\CoAlg(\ftF)$.
  Consider
  \begin{equation*}
    \alpha_c
    = \bigvee \{ \alpha\mid c \colon (A,\alpha)\to (\ftF A,\ftF \alpha)\text{ is in
      $\catX$ and, for all $i\in I$, }\alpha\le f_{\alpha_i}^\triangleleft\}.
  \end{equation*}
  Then, by Proposition~\ref{p:1},
  $c \colon (A,\alpha_c) \to (\ftF A,\ftF\alpha_c)$ is a morphism of $\catX$.
  Moreover, by construction, $\alpha_c \leq f_{\alpha_i}^\triangleleft$ for all
  $i\in I$; hence, $(f_i \colon (A, \alpha_c) \to (A_i,\alpha_i))_{i \in I}$ is
  a cone in $\catX$.  Therefore,
  $(f_i \colon (A, \alpha_c, c) \to (A_i,\alpha_i,c_i))_{i \in I}$ is a cone in
  $\CoAlg(\ftbF)$. We claim that this cone is $\ftU$-initial.

  Let $(g_i \colon (B,\beta,b) \to (A_i, \alpha_i, c_i)$ be a cone in
  $\CoAlg(\ftbF)$, and $h \colon (B,b) \to (A, c)$ a morphism in $\CoAlg(\ftF)$
  such that, for every $i \in I$,
  \begin{equation}
    \label{p:4}
    f_i \cdot h = g_i
  \end{equation}
  We will see that $h_\beta^\triangleright \leq \alpha_c$. First observe that it
  follows from \eqref{p:4} that
  $h_\beta^\triangleright \leq f_{\alpha_i}^\triangleleft$ for all $i\in
  I$. Furthermore, since $c \cdot h = Fh \cdot b$ in $\catA$ it follows that
  $c \colon (A, h_\beta^\triangleright) \to (FA, F(h_\beta^\triangleright))$ is
  a morphism of $\catX$ because
  $h \colon (B, \beta) \to (A, h_\beta^\triangleright)$ is final. Therefore, by
  construction of $\alpha_c$, we conclude that
  $h_\beta^\triangleright \leq \alpha_c$.
\end{proof}

\begin{corollary}
  \label{p:13}
  The category $\CoAlg(\ftbF)$ has limits of shape $I$ if and only if
  $\CoAlg(\ftF)$ has limits of shape $I$. In particular, $\CoAlg(\ftbF)$ has a
 terminal object if and only if $\CoAlg(\ftF)$ has one.
\end{corollary}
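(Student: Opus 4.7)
The plan is to reduce the statement to Theorem~\ref{p:3} together with two standard consequences of topologicity recorded in \cite{AHS90}: every topological functor uniquely lifts (in particular creates) all small limits, and it admits a fully faithful left adjoint $\ftD$ (the discrete lift) and a fully faithful right adjoint $\ftI$ (the indiscrete lift). In particular, being itself a right adjoint to $\ftD$, the functor $\ftU \colon \CoAlg(\ftbF) \to \CoAlg(\ftF)$ preserves limits, and $\ftU \ftI = \ftId$ because $\ftI$ is fully faithful.

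For the ``if'' direction, I would invoke creation of limits directly: given a diagram $D \colon I \to \CoAlg(\ftbF)$, the limit of $\ftU \circ D$ in $\CoAlg(\ftF)$ lifts uniquely along $\ftU$ to a limit cone of $D$ in $\CoAlg(\ftbF)$.

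For the ``only if'' direction, given a diagram $D \colon I \to \CoAlg(\ftF)$, I would pass to the composite $\ftI \circ D \colon I \to \CoAlg(\ftbF)$, take its limit $L$ in $\CoAlg(\ftbF)$ (which exists by hypothesis), and then apply $\ftU$. Since $\ftU$ preserves limits and $\ftU \ftI = \ftId$, the image of the limit cone of $L$ under $\ftU$ is a limit cone for $D = \ftU \ftI D$ in $\CoAlg(\ftF)$. The terminal-object clause is the special case $I = \emptyset$.

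I do not foresee a genuine obstacle: the argument is formal category theory once Theorem~\ref{p:3} is in place. The only point demanding attention is to keep track of which of the two adjoints of $\ftU$ is being used where --- creation of limits for the forward direction, and the fully faithful right adjoint $\ftI$ together with the fact that $\ftU$ itself preserves limits for the converse.
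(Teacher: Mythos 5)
Your proposal is correct and is essentially the paper's (implicit) argument: Corollary~\ref{p:13} is just the standard fact that a topological functor lifts limits uniquely and preserves them, and that consequently its domain has limits of a given shape if and only if its codomain does (see \cite[Theorems~21.15--21.16]{AHS90}), applied to Theorem~\ref{p:3}. One terminological caveat: topological functors lift limits \emph{uniquely} but do not in general \emph{create} them in the strict sense (a non-initial lift of a limit cone can still be a cone over the diagram without being a limit), but your argument only ever uses lifting and the adjoints $\ftD\dashv\ftU\dashv\ftI$, so nothing breaks.
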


This means that $\CoAlg(\ftbF)$ cannot be ``more complete'' than $\CoAlg(\ftF)$,
one of the reasons why in Section~\ref{sec:hausdorff-functor} we will
lift the powerset functor on $\SET$ to $\Cats{\V}$ but only ``up to natural
transformation''.

On the other hand, Corollary~\ref{p:13} also means that $\CoAlg(\ftbF)$ is ``at
least as complete'' as $\CoAlg(\ftF)$, which allow us to recover known results
about the existence of limits in $\CoAlg(\ftbF)$.  For example, in \cite[Theorem
6.2]{BBKK18} it is proven, by implicitly constructing the right adjoint of
$\ftU$, that every lifting to the category of symmetric metric spaces of an
endofunctor on $\SET$ that admits a terminal coalgebra also admits a terminal
coalgebra. A similar result was also obtained in~\cite[Theorem 4.15]{BKV19} for
``$\V$-Catifications'' --- very specific liftings from $\SET$ to $\Cats{\V}$.

Note that Theorem~\ref{p:3} even tell us how to construct limits in
$\CoAlg(\ftbF)$ from limits in $\CoAlg(\ftF)$.  In particular, if $\ftU$ is a
forgetful functor to $\SET$ then a limit in $\CoAlg(\ftbF)$ has the same
underlying set of the corresponding limit in $\CoAlg(\ftF)$.  This behaviour was
already observed in \cite[Theorem 4.16]{BKV19} for some particular liftings to
$\Cats{\V}$.

\begin{example}
  Given a subfunctor $\ftF$ of the powerset functor on $\SET$, the corresponding
  class of Kripke polynomial functors is typically defined as the smallest class
  of $\SET$-functors that contains the identity functor, all constant functors
  and it is closed under composition with $\ftF$, sums and product of
  functors. If we are interested in strict liftings to $\Cats{\V}$, then
  Theorem~\ref{p:3} tells us that is possible to recast over $\Cats{\V}$ all the
  theory about limits in categories of Kripke polynomial coalgebras over $\SET$.
  For example, if we consider a strict lifting of the finite powerset functor,
  then every category of coalgebras of a Kripke polynomial functor is
  (co)complete, and every limit is obtained as the initial lift of the
  corresponding limit of $\SET$-coalgebras.
\end{example}

In the sequel, we give an example of a generic way of lifting a functor $\ftF
\colon \catA \to \catA$ to a category $\catX$ that is topological over $\catA$.
In particular, this construction is used to lift  $\SET$-functors to categories
of metric spaces in \cite{BBKK18}, and to categories of $\V$-categories in
\cite{BKV19}.

For a functor $\ftF \colon \catA \to \catA$ and $\catA$-morphisms
$\psi \colon A \to \widetilde{A}$ and
$\sigma \colon\ftF \widetilde{A} \to \widetilde{A}$, we denote by
$\psi^\Diamond \colon \ftF A \to \widetilde{A}$ the composite

\begin{equation*}
  \ftF A \xrightarrow{\quad \ftF\psi \quad} \ftF\widetilde{A} \xrightarrow{\quad \sigma \quad} \widetilde{A}
\end{equation*}

in $\catA$.

Consider now a category $\catX$ equipped with a topological functor
$\ftII{-} \colon\catX\to\catA$ and an $\catX$-object $\widetilde{X}$ whose
underlying set $\ftII{\widetilde{X}}$ carries the structure
$\sigma \colon \ftF\ftII{\widetilde{X}}\to\ftII{\widetilde{X}}$ of a
$\ftF$-algebra. Then
$(\psi^\Diamond \colon \ftF
\ftII{X}\to\ftII{\widetilde{X}})_{\psi\in\catX(X,\widetilde{X})}$ is a
$\ftII{-}$-structured cone, and we define $\ftbF X$ to be the domain of the
initial lift of this cone. Clearly:

\begin{theorem}\label{d:thm:2}
  \begin{enumerate}
  \item The construction above defines a functor $\ftbF \colon\catX\to\catX$
    making the diagram
    \begin{equation*}
      \begin{tikzcd}[row sep=large, column sep=large]
        \catX \ar[r,"\ftbF"]\ar[d,"\ftII{-}"'] & \catX
        \ar[d,"\ftII{-}"] \\
        \catA \ar[r,"\ftF"'] & \catA
      \end{tikzcd}
    \end{equation*}
    commutative.
  \item For every $\psi \colon X\to\widetilde{X}$ in $\catX$, $\psi^\Diamond$ is
    an $\catX$-morphism $\psi^\Diamond \colon \ftbF X\to\widetilde{X}$. In
    particular, $\sigma={1_{\widetilde{X}}}^\Diamond$ is an $\catX$-morphism
    $\sigma \colon \ftbF\widetilde{X}\to \widetilde{X}$.
  \item\label{d:item:1} If $\widetilde{X}$ is injective with respect to initial
    morphisms, then $\ftbF \colon\catX\to\catX$ preserves initial morphism
    (compare with \cite[Theorem~5.8]{BBKK18}).
  \item \label{d:item:2} Let $\alpha \colon \ftF\Rightarrow \ftG$ be a natural
    transformation such that
    $\sigma_{\ftG}\cdot \alpha_{\widetilde{X}}=\sigma_{\ftF}$.  Then $\alpha$
    lifts to a natural transformation between the corresponding
    $\catX$-functors.
  \item If $\ftF=\ftT$ is part of a monad $\mT=\monad$ on $\catA$ and
    $\sigma\colon \ftT\ftII{\widetilde{X}}\to\ftII{\widetilde{X}}$ is a
    $\mT$-algebra, then $\mT$ lifts naturally to a monad $\mbT=\monadb$ on
    $\catX$.
  \end{enumerate}
\end{theorem}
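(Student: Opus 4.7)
The plan is to exploit throughout the defining universal property of $\ftbF X$: a $\catA$-morphism $h \colon A \to \ftF \ftII{X}$ is the image under $\ftII{-}$ of an $\catX$-morphism $(A,\alpha) \to \ftbF X$ if and only if, for every $\psi \in \catX(X, \widetilde{X})$, the composite $\psi^\Diamond \cdot h$ is in $\catX((A,\alpha), \widetilde{X})$. Part (2) is then immediate: each $\psi^\Diamond$ belongs by construction to the cone used to define $\ftbF X$, and the statement about $\sigma$ follows by taking $X = \widetilde{X}$ and $\psi = 1_{\widetilde{X}}$. For functoriality in part (1), given $f \colon X \to Y$ in $\catX$, I would apply the test to $\ftbF Y$: for each $\psi \in \catX(Y, \widetilde{X})$ the composite $\psi^\Diamond \cdot \ftF \ftII{f}$ equals $(\psi \cdot f)^\Diamond$, which is already in the defining cone of $\ftbF X$. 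Preservation of identities and composition, together with commutativity of the square, then follow from the corresponding facts about $\ftF$ and from faithfulness of $\ftII{-}$.

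For part (3), given an initial morphism $m \colon X \to Y$ in $\catX$, the structure on $\ftbF X$ is the initial lift of the cone $(\psi^\Diamond)_{\psi \in \catX(X,\widetilde{X})}$, whereas the initial lift of $\ftbF m$ with respect to $\ftbF Y$ is the initial lift of the cone $(\varphi^\Diamond \cdot \ftF \ftII{m})_{\varphi \in \catX(Y,\widetilde{X})} = ((\varphi \cdot m)^\Diamond)_{\varphi \in \catX(Y,\widetilde{X})}$. The injectivity hypothesis on $\widetilde{X}$ says precisely that every $\psi \colon X \to \widetilde{X}$ factors as $\varphi \cdot m$ for some $\varphi \colon Y \to \widetilde{X}$, so the two cones generate the same initial structure and $\ftbF m$ is initial. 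For part (4), write $\bar{\ftG}$ for the analogous lift of $\ftG$ along $\sigma_\ftG$. The test applied to $\bar{\ftG} X$ asks whether, for every $\psi \colon X \to \widetilde{X}$, the composite $\sigma_\ftG \cdot \ftG \ftII{\psi} \cdot \alpha_X$ lies in $\catX(\ftbF X, \widetilde{X})$; naturality of $\alpha$ followed by the hypothesis $\sigma_\ftG \cdot \alpha_{\widetilde{X}} = \sigma_\ftF$ rewrites this as $\sigma_\ftF \cdot \ftF \ftII{\psi} = \psi^\Diamond$, which is in $\catX$ by part (2); naturality of the lifted transformation transfers from $\catA$ by faithfulness of $\ftII{-}$.

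Part (5) is where I expect to spend the most care, since both $e_X$ and $m_X$ have to be lifted. For the unit, naturality of $e$ together with the $\mT$-algebra identity $\sigma \cdot e_{\ftII{\widetilde{X}}} = 1$ reduces $\psi^\Diamond \cdot e_X$ to $\ftII{\psi}$, which is an $\catX$-morphism. For the multiplication, naturality of $m$ combined with the associativity axiom $\sigma \cdot m_{\ftII{\widetilde{X}}} = \sigma \cdot \ftT \sigma$ rewrites $\psi^\Diamond \cdot m_X$ as $\sigma \cdot \ftT \psi^\Diamond = (\psi^\Diamond)^\Diamond$; here part (2) ensures that $\psi^\Diamond$ is already an $\catX$-morphism $\ftbT X \to \widetilde{X}$, and a second application of part (2), now with $\ftbT X$ in place of $X$ and $\psi^\Diamond$ in place of $\psi$, shows that $(\psi^\Diamond)^\Diamond$ is an $\catX$-morphism $\ftbT \ftbT X \to \widetilde{X}$, as required. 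The monad axioms in $\catX$ then descend from those in $\catA$ via faithfulness of $\ftII{-}$. The main conceptual obstacle is arguably part (3), where the injectivity assumption must be pinpointed as exactly what makes the two candidate cones on $\ftF \ftII{X}$ agree; the iterated use of part (2) in part (5) is technically delicate but otherwise follows the same template.
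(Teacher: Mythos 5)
Your proposal is correct, and for parts (1), (2), (4) and (5) it follows exactly the route the paper intends: the paper's proof is a one-line sketch pointing at the triangle $\psi^\Diamond\cdot\ftF f=(\psi\cdot f)^\Diamond$ and saying the unit/multiplication cases are ``similar'', and your computations ($\psi^\Diamond\cdot e_{\ftII{X}}=\ftII{\psi}$ via the unit law, $\psi^\Diamond\cdot m_{\ftII{X}}=(\psi^\Diamond)^\Diamond$ via associativity and a second application of part (2)) are precisely the details being elided. The only place where you genuinely diverge is part (3): the paper defers this to Proposition~\ref{d:prop:7}, which is stated more generally --- for an arbitrary faithful functor to $\SET$, assuming only that the cone $(\psi^\Diamond)_{\psi}$ is initial --- and is proved by testing against structured maps $g$ with $\ftII{\ftF f}\cdot g=\ftII{h}$. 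You instead use topologicity of $\ftII{-}$ directly: the structure on $\ftbF X$ is the initial lift of the cone with legs $\{\psi^\Diamond\mid\psi\in\catX(X,\widetilde{X})\}$, the $\ftF\ftII{m}$-initial lift of the structure on $\ftbF Y$ is the initial lift of the cone with legs $\{(\varphi\cdot m)^\Diamond\mid\varphi\in\catX(Y,\widetilde{X})\}$, and injectivity of $\widetilde{X}$ says these two sets of structured arrows coincide. Your argument is shorter and more transparent in the situation of the theorem, where $\ftII{-}$ is topological by hypothesis; the paper's version buys applicability beyond topological functors (it only needs faithfulness and initiality of the defining cone), which is why it is factored out as a separate proposition. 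Both are sound.
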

\begin{proof}
  The first affirmation follows immediately from the commutativity of the
  diagram
  \begin{equation*}\label{d:eq:1}
    \begin{tikzcd}
      \ftF X \ar[r,"\ftF f"]\ar[dr,"(\psi\cdot f)^{\Diamond}"'] & \ftF Y \ar[d,"\psi^\Diamond"] \\
      & \widetilde{X},
    \end{tikzcd}
  \end{equation*}
  and similarly the last two ones. The second affirmation is true by
  definition. In Proposition~\ref{d:prop:7} we prove a slightly more general
  version of \eqref{d:item:1}.
\end{proof}

\begin{remark}
  We note that in Theorem~\ref{d:thm:2}~\eqref{d:item:2}, the inequality
  $\sigma_{\ftG} \cdot \alpha_{\widetilde{X}} \le \sigma_{\ftF}$ does not
  guarantee that $\alpha_X \colon \ftF X \to \ftG X$ is an $\catX$-morphism
  (this contradicts \cite[Theorem~8.1]{BBKK18}). For instance, consider
  $\catX = \MET_\sym$, $\widetilde{X}=[0,\infty]$ and
  $\ftF,\ftG \colon \SET\to\SET$ with $\ftF=\ftG$ being the identity functor on
  $\SET$, $\lambda=1$, $\sigma_{\ftG}=1_{[0,\infty]}$ and $\sigma_{\ftF}=\infty$
  (constant). Clearly,
  $\sigma_{\ftG}\cdot\lambda_{[0,\infty]}\le\sigma_{\ftF}$. However,
  $\ftG \colon \MET_\sym\to\MET_\sym$ is the identity functor and
  $\ftF \colon \MET_\sym\to\MET_\sym$ transforms every symmetric metric space
  into the indiscrete space on the same underlying set. Hence, for a
  non-indiscrete space $X$, $\lambda_X \colon \ftF X\to \ftG X$ is not a
  morphism in $\MET_\sym$.
\end{remark}

In this context it is useful to note that Theorem~\ref{d:thm:2}\eqref{d:item:1}
gives a sufficient condition for the preservation of initial morphisms that can
be formulated in a slightly more general way.

\begin{proposition}
  \label{d:prop:7}
  Let $\ftF \colon\catX\to\catX$ be a functor,
  $\sigma \colon \ftF\widetilde{X} \to \widetilde{X}$ a morphism in $\catX$, and
  $\ftII{-} \colon \catX \to \SET$ a faithful functor. Assume further that
  $\widetilde{X}$ is injective in $\catX$ with respect to initial morphisms and,
  for every object $X$ in $\catX$, the cone
  $(\psi^\Diamond \colon \ftF
  X\to\widetilde{X})_{\psi\in\catX(X,\widetilde{X})}$ in $\catX$ is
  initial. Then $\ftF$ preserves initial morphisms.
\end{proposition}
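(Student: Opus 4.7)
The plan is to show that the functor $\ftF$ sends an initial morphism $f \colon X\to Y$ to an initial morphism $\ftF f \colon \ftF X\to \ftF Y$ by reducing everything to the initiality of the cone $(\phi^\Diamond)_{\phi\in\catX(X,\widetilde{X})}$ at the object $\ftF X$. The key idea is that injectivity of $\widetilde{X}$ with respect to initial morphisms allows us to ``transfer'' the initial cone at $\ftF Y$ back to one at $\ftF X$ along $\ftF f$.

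Concretely, I would take an object $Z$ of $\catX$ and a map $h \colon \ftII{Z}\to\ftII{\ftF X}$ in $\SET$ such that $\ftII{\ftF f}\cdot h$ is the underlying map of a morphism $k \colon Z\to \ftF Y$ in $\catX$, and then show that $h$ itself underlies a morphism $Z\to\ftF X$ in $\catX$. For each $\phi \in \catX(X,\widetilde{X})$, injectivity of $\widetilde{X}$ along the initial morphism $f$ yields some $\bar{\phi} \in \catX(Y,\widetilde{X})$ with $\bar{\phi}\cdot f=\phi$. Applying $\ftF$ and post-composing with $\sigma$ gives
\begin{equation*}
  \bar{\phi}^\Diamond \cdot \ftF f = \sigma\cdot \ftF\bar{\phi}\cdot \ftF f = \sigma\cdot \ftF\phi = \phi^\Diamond.
\end{equation*}
Consequently, $\phi^\Diamond \cdot h = \bar{\phi}^\Diamond\cdot \ftF f\cdot h = \bar{\phi}^\Diamond\cdot k$, and since both $\bar{\phi}^\Diamond$ and $k$ are morphisms of $\catX$, so is their composite. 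Thus, for every $\phi\in\catX(X,\widetilde{X})$, the map $\phi^\Diamond\cdot h$ underlies a morphism of $\catX$.

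By the hypothesis that the cone $(\phi^\Diamond \colon \ftF X\to\widetilde{X})_{\phi\in\catX(X,\widetilde{X})}$ is initial with respect to $\ftII{-}$, this forces $h$ to be a morphism $Z\to \ftF X$ in $\catX$, which is exactly what is needed to conclude that $\ftF f$ is initial. The main conceptual point (rather than obstacle) is recognising that the \emph{existing} initial cones at $\ftF X$ and $\ftF Y$, together with injectivity, already encode the preservation of initiality of $f$; once this is seen, the argument is purely a matter of chasing the definition of $(-)^\Diamond$ through the diagram. No extra hypothesis on $\ftII{-}$ beyond faithfulness is used, so this gives the promised mild generalisation of Theorem~\ref{d:thm:2}\eqref{d:item:1}.
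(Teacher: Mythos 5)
Your proof is correct and follows essentially the same route as the paper's: factor each $\phi\colon X\to\widetilde{X}$ through the initial morphism $f$ via injectivity of $\widetilde{X}$, observe that $\phi^\Diamond=\bar{\phi}^\Diamond\cdot\ftF f$, and then invoke initiality of the cone $(\phi^\Diamond)_{\phi}$ at $\ftF X$ to conclude that the structured map $h$ is a morphism. The only (cosmetic) difference is that the paper writes the composites explicitly under $\ftII{-}$, whereas you suppress the forgetful functor in expressions like $\phi^\Diamond\cdot h$; the underlying argument is identical.
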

\begin{proof}
  Let $f \colon X \to Y$ be an initial morphism in $\catX$.  Since
  $\widetilde{X}$ is injective with respect to initial morphisms, every morphism
  $\psi \colon X \to \widetilde{X}$ in $\catX$ factors as $h_\psi \cdot f$, for
  some $h_\psi \colon Y \to \widetilde{X}$ in $\catX$.  Hence, $\ftF\psi = \ftF
  h_\psi \cdot \ftF f$.  Now, suppose that $h \colon Z \to \ftF Y$ is a morphism
  in $\catX$, and $g \colon \ftII{Z} \to \ftII{\ftF X}$ is a function such that
  $\ftII{\ftF f} \cdot g = \ftII{h}$. Then, for every morphism $\psi \colon X
  \to \widetilde{X}$ in $\catX$, we have
  \[
    \ftII{\psi^\Diamond} \cdot g = \ftII{\sigma \cdot h_\psi \cdot \ftF f} \cdot g
                                 = \ftII{\sigma \cdot h_\psi \cdot h}.
  \]
  Therefore, the claim follows because the cone $(\psi^\Diamond \colon \ftF X
  \to \widetilde{X})_{\psi\in\catX(X,\widetilde{X})}$ is initial and $\ftII{-}$
  is faithful.
\end{proof}

The injectivity-condition on $\widetilde{X}$ is often fulfilled; the proposition
below collects some examples.

\begin{proposition}
  \begin{enumerate}
  \item The $\V$-category $(\V, \hom)$ is injective in $\Cats{\V}$ with respect
    to initial morphisms.  Since $\Cats{\V}_\sym\hookrightarrow\Cats{\V}$
    preserves initial morphisms (see Theorem~\ref{p:22}), the symmetrisation
    of $(\V, \hom)$ is injective in $\Cats{\V}_\sym$.
  \item The unit interval $[0,1]$ is injective in $\POSCH$ with respect to
    initial morphisms (see \cite{Nac50}).
  \item The Sierpiński space is injective with respect to initial morphisms in
    the category $\TOP$ of topological spaces and continuous maps.
  \end{enumerate}
\end{proposition}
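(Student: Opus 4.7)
The plan is to treat the three assertions by the appropriate extension arguments. For the first assertion I would construct a right Kan extension. Given an initial $f\colon X\to Y$ in $\Cats{\V}$ and a $\V$-functor $\psi\colon X\to(\V,\hom)$, I set
\[
\bar\psi(y)=\bigwedge_{x\in X}\hom\bigl(Y(y,f(x)),\,\psi(x)\bigr).
\]
I would check that $\bar\psi$ is a $\V$-functor using transitivity of $Y$ combined with the adjunction $\otimes\dashv\hom$ in the quantale. Then I would verify $\bar\psi\cdot f=\psi$ as follows: by initiality $Y(f(x'),f(x))=X(x',x)$, hence $\bar\psi(f(x'))=\bigwedge_{x}\hom(X(x',x),\psi(x))$; the inequality $\bar\psi(f(x'))\le\psi(x')$ comes from restricting to $x=x'$ and using reflexivity of $X$, while the reverse inequality is precisely the $\V$-functoriality of $\psi$ written in adjoint form $X(x',x)\otimes\psi(x')\le\psi(x)$. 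For the symmetric variant, Theorem~\ref{p:22} allows me to apply the same construction inside $\Cats{\V}$: an initial morphism in $\Cats{\V}_\sym$ is initial in $\Cats{\V}$, a $\V$-functor out of a symmetric object into the symmetrised $(\V,\hom)$ carries the same data as a $\V$-functor into $(\V,\hom)$, and the Kan extension $\bar\psi$ automatically satisfies the ``reverse'' hom-inequality because $Y$ is symmetric (apply $Y(y,y')\le\hom(\bar\psi(y),\bar\psi(y'))$ with the arguments swapped).

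For the second assertion I would appeal directly to Nachbin's classical extension theorem for ordered compact Hausdorff spaces \cite{Nac50}: an initial morphism in $\POSCH$ over $\SET$ is an order-embedding whose image is closed (since the domain is compact and the codomain is Hausdorff), and Nachbin's theorem supplies the required monotone continuous extension with values in $[0,1]$.

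For the third assertion, I would identify continuous maps $X\to S$ with open subsets of $X$ via $f\mapsto f^{-1}\{1\}$. If $g\colon X\to Y$ is initial in $\TOP$, every open set of $X$ has the form $g^{-1}(V)$ for some open $V\subseteq Y$; picking such a $V$ for $U=f^{-1}\{1\}$ and letting $\bar f$ be the characteristic function of $V$ yields a continuous map $\bar f\colon Y\to S$ with $\bar f\cdot g=f$.

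The subtle point I expect is the symmetric case of (1): one needs to verify that the right Kan extension computed in the non-symmetric category $\Cats{\V}$ genuinely lifts to $\Cats{\V}_\sym$, and this is where the symmetry of $Y$ together with the preservation of initial morphisms by $\Cats{\V}_\sym\hookrightarrow\Cats{\V}$ is essential. Parts (2) and (3) are really just invocations of well-established extension theorems and should not present further obstacles.
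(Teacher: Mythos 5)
Your proposal is correct. The paper in fact gives no proof of this proposition at all -- it is presented as a collection of known examples, with only a citation to Nachbin for item (2) -- so there is no argument to compare against; your write-up supplies exactly the details one would expect. The key computation in (1) checks out: $\bar\psi(y)=\bigwedge_{x}\hom\bigl(b(y,f(x)),\psi(x)\bigr)$ is a $\V$-functor by $b(y,y')\otimes b(y',f(x))\le b(y,f(x))$ together with $\otimes\dashv\hom$, and initiality ($b(f(x'),f(x))=a(x',x)$) plus reflexivity and $\V$-functoriality of $\psi$ give $\bar\psi\cdot f=\psi$; the symmetric case follows as you say, since for symmetric $X$ a $\V$-functor into the symmetrisation of $(\V,\hom)$ is the same data as one into $(\V,\hom)$, and symmetry of $Y$ makes $\bar\psi$ land in the symmetrisation. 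Parts (2) and (3) are indeed just Nachbin's extension theorem (initial morphisms in $\POSCH$ are automatically injective by antisymmetry, with closed image by compactness) and the open-subset description of maps into the Sierpi\'nski space.
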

The next proposition shows that the Hausdorff distance between subsets of metric
spaces (see~\cite{Hau14}) emerges naturally in the context of $\V$-categories
from the construction discussed above.

\begin{proposition}
  \label{p:30}
  The lifting of the powerset functor $\ftP$ on $\SET$ to $\Cats{\V}$ with
  respect to $\bigwedge \colon \ftP\V \to \V$ sends a $\V$-category $(X,a)$ to
  $(\ftP X, \ftH a)$, where for all $A,B \subseteq X$,
  \[
    \ftH a(A,B) = \bigwedge_{y \in B}\bigvee_{x \in A} a(x,y).
  \]
\end{proposition}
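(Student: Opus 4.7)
The plan is to instantiate the construction of Theorem~\ref{d:thm:2} with $\catA = \SET$, $\catX = \Cats{\V}$, $\ftF = \ftP$, $\widetilde{X} = (\V,\hom)$ and $\sigma = \bigwedge \colon \ftP\V \to \V$. By that theorem, the structure $\ftH a$ on $\ftP X$ is the $\Cats{\V}$-initial lift of the cone $(\psi^\Diamond\colon \ftP X \to \V)_{\psi \in \Cats{\V}((X,a),(\V,\hom))}$, where $\psi^\Diamond(A) = \bigwedge_{x \in A}\psi(x)$. Since initial structures in $\Cats{\V}$ are computed as pointwise infima of the pulled-back hom-values, the task reduces to verifying the identity
\[
\bigwedge_{\psi}\hom\!\left(\bigwedge_{x \in A}\psi(x),\;\bigwedge_{y \in B}\psi(y)\right) \;=\; \bigwedge_{y \in B}\bigvee_{x \in A} a(x,y),
\]
where the outer left-hand infimum ranges over all $\V$-functors $\psi\colon(X,a)\to(\V,\hom)$.

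For the inequality $(\ge)$, I would first commute the inner meet through $\hom(-,-)$ using that $\hom(u,-)$ preserves infima, reducing to the pointwise estimate $\hom(\bigwedge_{x\in A}\psi(x),\psi(y)) \ge a(x,y)$ for each fixed $x\in A$ and $y\in B$. This in turn is a consequence of the defining $\V$-functor inequality $a(x,y) \le \hom(\psi(x),\psi(y))$ combined with the contravariance of $\hom$ in its first argument and $\bigwedge_{x' \in A}\psi(x') \le \psi(x)$. Joining over $x\in A$ and then meeting over $y\in B$ and over $\psi$ delivers the bound.

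For the opposite inequality $(\le)$, the decisive step is to exhibit a single witnessing $\V$-functor. I would take $\psi_A\colon (X,a)\to(\V,\hom)$ defined by $\psi_A(z) = \bigvee_{x\in A} a(x,z)$, i.e.\ the join of the covariant representables at the elements of $A$. Checking that $\psi_A$ is a $\V$-functor amounts to showing $a(z,z')\otimes\psi_A(z)\le\psi_A(z')$, which follows from $\otimes$ preserving joins in each argument together with the transitivity axiom $a(z,z')\otimes a(x,z)\le a(x,z')$ of $(X,a)$. By reflexivity $a(z,z)\ge k$, we then have $\psi_A^\Diamond(A) \ge k$, while by construction $\psi_A^\Diamond(B) = \bigwedge_{y\in B}\bigvee_{x\in A} a(x,y)$ is precisely the desired right-hand side. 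Using $\hom(k,v)=v$ and the contravariance of $\hom(-,v)$, we get $\hom(\psi_A^\Diamond(A),\psi_A^\Diamond(B)) \le \psi_A^\Diamond(B)$, closing the argument.

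The only genuinely non-routine step will be spotting the witness $\psi_A$ and confirming its $\V$-functoriality; everything else is standard calculus in the quantale $\V$.
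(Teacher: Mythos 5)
Your proposal is correct and follows essentially the same route as the paper: both directions are handled identically, and your witness $\psi_A(z)=\bigvee_{x\in A}a(x,z)$ is exactly the $\V$-functor $f=\bigvee\cdot\mate{a}$ that the paper builds from Propositions~\ref{p:9} and~\ref{p:10}. The only cosmetic difference is that you verify $\V$-functoriality of the witness by direct computation rather than by citing those two propositions.
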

\begin{proof}
  Let $(X,a)$ be a $\V$-category and $\ftP a$ the $\V$-category structure
  corresponding to the lifting  aforementioned.  That is, for every
  $A,B \in \ftP X$,
  \[
    \ftP a(A,B) = \bigwedge_{\psi \in \Cats{\V}(X,\V)} \hom(\bigwedge_{x \in A} \psi(x), \bigwedge_{y \in B} \psi(y)).
  \]

  First, observe that for every $u \in \V$ the function
  $\hom(u,-) \colon \V \to \V$ preserves infima and the map
  $\hom(-,u) \colon \V \to \V$ is antimonotone.

  Hence, for every $\V$-functor $\psi \colon (X,a) \to (\V, \hom)$,
  \[
   \ftH a(A,B) \leq \bigwedge_{y \in B} \bigvee_{x \in A} \hom(\psi(x), \psi(y))
               \leq \bigwedge_{y \in B} \hom(\bigwedge_{x \in A} \psi(x), \psi(y))
               = \hom(\bigwedge_{x \in A} \psi(x), \bigwedge_{y \in B} \psi(y)).
  \]
  Therefore, $\ftH a(A,B) \leq \ftP a(A,B)$.

  To see that the reverse inequality holds, consider the $\V$-functor
  $f \colon (X,a) \to (\V, \hom)$ below that is obtained by combining
  Propositions~\ref{p:9} and~\ref{p:10}.

  \begin{center}
    \begin{tikzcd}
      X\ar[r,"\mate{a}",near end]\ar[rr,"f",bend left=40] & {\V^A} \ar[r,"\bigvee", near start] & {\V}
    \end{tikzcd}
  \end{center}

  Therefore, as $\hom(-,u)$ is antimonotone,
  \[
    \ftP a(A,B) \leq \hom(\bigwedge_{y' \in A} f(y'), \bigwedge_{y \in B} f(y))
                \leq \hom(k, \bigwedge_{y \in B} \bigvee_{x \in A} a(x,y))
                = \bigwedge_{y \in B} \bigvee_{x \in A} a(x,y)
                = \ftH a(A,B).\qedhere
  \]
\end{proof}

\begin{remark}
  The notion of a (symmetric) distance between subsets of a metric space goes
  back to \cite{Pom05} and was made popular by its use in \cite{Hau14}. For more
  information on the history of this idea we refer to \cite{BT06}.
\end{remark}

\begin{corollary}
  \label{p:17}
  The lifting of the powerset functor to $\Cats{\V}$ of Proposition~\ref{p:30}
  preserves initial morphisms.
\end{corollary}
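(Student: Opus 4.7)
The plan is to recognise this corollary as a direct instance of the machinery already assembled in the section. Proposition~\ref{p:30} identifies the lifting of $\ftP \colon \SET \to \SET$ to $\Cats{\V}$ as the one produced by the generic construction of Theorem~\ref{d:thm:2}, applied with $\widetilde{X} = (\V,\hom)$ and algebra structure $\bigwedge \colon \ftP\V \to \V$. That is, $(\ftP X, \ftH a)$ is the domain of the initial lift of the cone
\[
  \bigl(\psi^\Diamond \colon \ftP X \to \V\bigr)_{\psi \in \Cats{\V}((X,a),(\V,\hom))},
  \qquad \psi^\Diamond(A) = \bigwedge_{x\in A}\psi(x).
\]

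Next I would invoke the preceding proposition, which states that $(\V,\hom)$ is injective in $\Cats{\V}$ with respect to initial morphisms. With this hypothesis in hand, the assertion is exactly the conclusion of Theorem~\ref{d:thm:2}\eqref{d:item:1} (equivalently, Proposition~\ref{d:prop:7} specialised to the present situation, where $\catX = \Cats{\V}$, $\widetilde{X}=(\V,\hom)$, and $\ftII{-}$ is the forgetful functor to $\SET$): since the defining cone for $\ftbF X$ is initial and $\widetilde{X}$ is injective with respect to initial $\V$-functors, $\ftbF$ preserves initial morphisms.

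There is no substantial obstacle; all the work has been done upstream. The only thing worth flagging is that one should state explicitly which general principle is being applied and check that its hypotheses have been verified in the text: the injectivity of $(\V,\hom)$ comes from the proposition cited just before Proposition~\ref{p:30}, and the initiality of the structured cone defining $(\ftP X, \ftH a)$ is built into the construction of Theorem~\ref{d:thm:2}. A one-line proof citing Theorem~\ref{d:thm:2}\eqref{d:item:1} is therefore entirely adequate.
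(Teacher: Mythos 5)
Your proposal is correct and matches the paper's intended derivation: the corollary is meant to follow from Proposition~\ref{p:30} identifying the lifting as the generic construction with $\widetilde{X}=(\V,\hom)$ and $\sigma=\bigwedge$, combined with the injectivity of $(\V,\hom)$ with respect to initial morphisms and Theorem~\ref{d:thm:2}\eqref{d:item:1} (i.e.\ Proposition~\ref{d:prop:7}). The paper gives no explicit proof precisely because this is the one-line argument you describe; it also notes later that Proposition~\ref{p:21}, via the lax-extension description of the same lifting, yields an independent second proof.
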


Another idea to tackle the problem of lifting an endofunctor $\ftF$ on $\SET$
to $\Cats{\V}$ is to consider first a lax extension $\widehat{\ftF}\colon
  \Rels{\V}\to\Rels{\V}$ of the functor $\ftF$ in the sense of \cite{Sea05}; that
is, to require
\begin{enumerate}
  \item $r\le r'\implies \widehat{\ftF} r\le\widehat{\ftF}r'$,
  \item $\widehat{\ftF} s\cdot\widehat{\ftF} r\le\widehat{\ftF} (s\cdot r)$,
  \item $\ftF f\le\widehat{\ftF}(f)$ and
    $(\ftF f)^\circ\le\widehat{\ftF}(f^\circ)$.
\end{enumerate}

It follows immediately (see \cite{Sea05}) that
\begin{equation*}
  \widehat{\ftF} (s\cdot f)=\widehat{\ftF} s\cdot \ftF f \quad\text{and}\quad
  \widehat{\ftF} (g^\circ\cdot r)=\ftF g^\circ\cdot\widehat{\ftF} r.
\end{equation*}

Then, based on this lax extension, the functor $\ftF \colon\SET\to\SET$ admits a
natural lifting to $\Cats{\V}$ (see~\cite{Tho09}): the functor
$\ftbF \colon \Cats{\V}\to\Cats{\V}$ sends a $\V$-category $(X,a)$ to $(\ftF
X,\widehat{\ftF}a)$. One advantage of this type of lifting is that allows us to
use the calculus of $\V$-relations. The following is a simple example.

\begin{proposition}
  \label{p:21}
  $\ftbF \colon\Cats{\V}\to\Cats{\V}$ preserves initial $\V$-functors.
\end{proposition}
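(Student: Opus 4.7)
The plan is to exploit the fact that initiality of a $\V$-functor $f\colon(X,a)\to(Y,b)$ with respect to the forgetful functor $\Cats{\V}\to\SET$ has a clean description in the calculus of $\V$-relations: $f$ is initial precisely when $a=f^\circ\cdot b\cdot f$. Indeed, any $\V$-category structure $a'$ on $X$ making $f$ a $\V$-functor satisfies $a'\le f^\circ\cdot b\cdot f$, and conversely $f^\circ\cdot b\cdot f$ is always a $\V$-category structure making $f$ a $\V$-functor. So the goal reduces to showing
\[
  \widehat{\ftF}a \;=\; (\ftF f)^\circ\cdot\widehat{\ftF}b\cdot\ftF f,
\]
which is exactly the statement that $\ftF f\colon(\ftF X,\widehat{\ftF}a)\to(\ftF Y,\widehat{\ftF}b)$ is initial.

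Next, I would substitute $a=f^\circ\cdot b\cdot f$ into $\widehat{\ftF}a$ and apply the two identities recorded just before the statement, namely
\[
  \widehat{\ftF}(s\cdot f)=\widehat{\ftF}s\cdot\ftF f \qquad\text{and}\qquad \widehat{\ftF}(g^\circ\cdot r)=(\ftF g)^\circ\cdot\widehat{\ftF}r.
\]
Applying the second identity with $g=f$ and $r=b\cdot f$ gives
\[
  \widehat{\ftF}(f^\circ\cdot b\cdot f) \;=\; (\ftF f)^\circ\cdot\widehat{\ftF}(b\cdot f),
\]
and then applying the first identity with $s=b$ yields
\[
  \widehat{\ftF}(b\cdot f) \;=\; \widehat{\ftF}b\cdot\ftF f.
\]
Composing the two equalities produces the required identity $\widehat{\ftF}a=(\ftF f)^\circ\cdot\widehat{\ftF}b\cdot\ftF f$, finishing the argument.

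There is essentially no obstacle beyond invoking the right identity at the right place; the work has already been done in establishing the two consequences of the lax-extension axioms. The only point worth checking carefully is the characterisation of initial $\V$-functors in terms of $\V$-relations, but this is standard and mirrors the usual description of initial morphisms with respect to a relational forgetful functor. In particular, no additional assumption on $\V$ or on $\ftF$ is needed: the proof uses only the three axioms of a lax extension (and their two immediate consequences displayed above).
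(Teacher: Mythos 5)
Your proposal is correct and follows exactly the paper's own argument: characterise initiality of $f$ as $a=f^\circ\cdot b\cdot f$ and then apply the two displayed consequences of the lax-extension axioms to obtain $\widehat{\ftF}a=(\ftF f)^\circ\cdot\widehat{\ftF}b\cdot\ftF f$. The paper's proof is just a terser version of the same computation.
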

\begin{proof}
  Let $f \colon(X,a)\to(Y,b)$ be a $\V$-functor with $a=f^\circ\cdot b\cdot
  f$. Then $\widehat{\ftF} a=\ftF f^\circ\cdot \widehat{\ftF} b\cdot \ftF f$.
\end{proof}

The result above generalises \cite[Theorem~5.8]{BBKK18}.

\begin{example}
  For a $\V$-relation $r \colon X \relto Y$, and subsets $A \subseteq X$, $B
    \subseteq Y$, the formula
  \[
    \bigwedge_{y \in B} \bigvee_{x \in A} r(x,y)
  \]
  defines a lax extension of the powerset functor on $\SET$ to $\Rels{\V}$
  (see~\cite{Sea05}). The corresponding lifting to $\Cats{\V}$ coincides with
  the one described in Proposition~\ref{p:30}. In particular, by
  Proposition~\ref{p:21}, we obtain another proof for the fact that this lifting
  preserves initial morphisms.
\end{example}

If we start with a monad $\mT=\monad$ on $\SET$, a \emph{lax extension} of
$\mT=\monad$ to $\Rels{\V}$ is a lax extension $\widehat{\ftT}$ of
the functor $\ftT$ to $\Rels{\V}$ such that
$m \colon \widehat{\ftT}\widehat{\ftT}\to\widehat{\ftT}$ and
$e \colon \ftId\to\widehat{\ftT}$ become op-lax:
\begin{align*}
  m_Y \cdot \widehat{\ftT}\widehat{\ftT} r & \le \widehat{\ftT} r \cdot m_X,
  & e_Y \cdot r \le \widehat{\ftT} r \cdot e_X
\end{align*}
for all $\V$-relations $r \colon X\relto Y$.

For a lax extension of a $\SET$-monad $\mT=\monad$ to $\Rels{\V}$, the
functions $e_X \colon X\to\ftT X$ and $m_X \colon \ftT\ftT X\to\ftT X$ become
$\V$-functors for each $\V$-category $X$, so that we obtain a monad on
$\Cats{\V}$. The Eilenberg--Moore algebras for this monad are triples
$(X,a,\alpha)$ where $(X,a)$ is a $\V$-category and $(X,\alpha)$ is an algebra
for the $\SET$-monad $\mT$ such that $\alpha \colon \ftT(X,a_0)\to(X,a_0)$ is a
$\V$-functor.  A map $f \colon X\to Y$ is a homomorphism
$f \colon (X,a,\alpha)\to (Y,b,\beta)$ of algebras precisely if $f$ preserves both
structures, that is, whenever $f \colon (X,a)\to(Y,b)$ is a $\V$-functor and
$f \colon (X,\alpha)\to(Y,\beta)$ is a $\mT$-homomorphism. For more information we
refer to \cite{Tho09,HST14}.

One possible way to construct lax extensions based on a (lax) $\mT$-algebra
structure $\xi \colon\ftT\V\to\V$ is devised in \cite{Hof07}: for every
$\V$-relation $r \colon X\times Y\to\V$ and for all $\fx\in \ftT X$ and $\fy\in
\ftT Y$,
\begin{equation*}
  \widehat{\ftT} r(\fx,\fy)=\bigvee\left\{\xi\cdot \ftT r(\fw)\;\Bigl\lvert\;\fw\in
    \ftT(X\times Y), \ftT\pi_1(\fw)=\fx,\ftT\pi_2(\fw)=\fy\right\}.
\end{equation*}
We note that $\widehat{\ftT}$ preserves the involution on $\Rels{\V}$, that is,
$\widehat{\ftT}(r^\circ)=(\widehat{\ftT} r)^\circ$ for all $\V$-relations
$r \colon X\relto Y$ (and we write simply $\widehat{\ftT} r^\circ$).

\begin{example}
  Consider the ultrafilter monad $\mU=\umonad$ on $\SET$, the quantale $\two$
  and the $\mU$-algebra
  \begin{equation*}
    \xi \colon \ftU 2\longrightarrow 2
  \end{equation*}
  sending every ultrafilter to its generating point. The category of algebras of
  the induced monad on $\Cats{\V}$ is the category $\ORDCH$ of (pre)ordered
  compact Hausdorff spaces introduced in \cite{Nac50} (see also \cite{Tho09}).
\end{example}

\section{Hausdorff polynomial functors on $\Cats{\V}$}
\label{sec:Hpf}

In this section we study a class of endofunctors on $\Cats{\V}$ that intuitively
is an analogue of the class of Kripke polynomial functors on $\SET$.  We begin by
describing a $\Cats{\V}$-counterpart of the powerset functor on $\SET$ that is
based on the upset functor on $\ORD$.

\subsection{The Hausdorff functor on $\Cats{\V}$}
\label{sec:hausdorff-functor}

We introduce now some $\V$-categorical versions of classical notions from
order theory. We start with the ``up-closure'' and ``down-closure'' of a subset.

\begin{definition}
  Let $(X,a)$ be a $\V$-category. For every $A\subseteq X$, put
  \[
    \upc^a A   = \{y \in X \mid k \le\bigvee_{x\in A}a(x,y) \}
    \qquad\text{and}\qquad
    \downc^a A = \{y \in X \mid k \le\bigvee_{x\in A}a(y,x) \}.
  \]
\end{definition}

As usual, we write $\upc^a x$ and $\downc^a x$ if $A=\{x\}$. We also observe
that $\upc^{a} A = \downc^{a^\circ} A$ which allows us to translate results
about $\upc^a$ to results about $\downc^a$, and \emph{vice versa}. Considering
the underlying ordered set $(X,\le)$ of $(X,a)$, we note that
\[
  \upc^\le A   \subseteq \upc^a A \qquad\text{and}\qquad
  \downc^\le A \subseteq \downc^a A
\]
for every $A\subseteq X$, with equality if $A$ is finite. To simplify
notation, we often write $\upc A$ and $\downc A$ whenever the corresponding
structure can be derived from the context.

\begin{remark}\label{d:rem:1}
  For an ordered set $X$, with $a$ denoting the $\V$-category structure induced
  by the order relation $\le$ of $X$, $\upc^\le A =\upc^a A$ and
  $\downc^\le A = \downc^a A$.
\end{remark}

\begin{lemma}
  For every $\V$-category $(X,a)$ and every $A\subseteq X$,
  \[
    A\subseteq \upc A,\quad \upc\upc A\subseteq \upc A, \quad
    A\subseteq \downc A, \quad \downc\downc A\subseteq\downc A.
  \]
\end{lemma}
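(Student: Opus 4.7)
The plan is to verify each of the four inclusions directly from the definition of $\upc^a$ and $\downc^a$, using only the two axioms of a $\V$-category: reflexivity $k \le a(x,x)$ for all $x \in X$, and transitivity $a(x,y)\otimes a(y,z)\le a(x,z)$. Since, as noted just before the lemma, $\upc^a A = \downc^{a^\circ} A$ and $(X,a^\circ)$ is again a $\V$-category (the quantale $\V$ being commutative throughout), the two statements about $\downc$ are obtained from the two statements about $\upc$ by passing to the opposite; hence it suffices to treat the $\upc$ side.

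For the first inclusion $A\subseteq \upc A$, I would take $x\in A$ and observe that reflexivity gives $k\le a(x,x)\le \bigvee_{x'\in A}a(x',x)$, so $x\in \upc A$.

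For the second inclusion $\upc\upc A\subseteq \upc A$, given $z\in \upc\upc A$ I have
\[
  k\le \bigvee_{y\in \upc A} a(y,z),
  \qquad\text{and for each such $y$,}\qquad
  k\le \bigvee_{x\in A} a(x,y).
\]
The idea is to combine these two estimates using $k=k\otimes k$, the fact that $\otimes$ distributes over arbitrary joins on both sides in the quantale $\V$, and transitivity. Explicitly,
\[
  k \;=\; k\otimes k
  \;\le\; \Bigl(\bigvee_{y\in\upc A} a(y,z)\Bigr)\otimes k
  \;\le\; \bigvee_{y\in\upc A}\Bigl(\bigvee_{x\in A} a(x,y)\Bigr)\otimes a(y,z)
  \;\le\; \bigvee_{x\in A} a(x,z),
\]
where the final step invokes transitivity inside each summand. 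This shows $z\in\upc A$.

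There is essentially no obstacle: the proof is a bookkeeping exercise with joins and the tensor. The only thing to be mildly careful about is the placement of $\otimes$ relative to $\bigvee$ and the correct direction of transitivity; these are handled uniformly by the standard quantale calculus. The arguments for $A\subseteq\downc A$ and $\downc\downc A\subseteq\downc A$ are obtained either by the $a\leftrightarrow a^\circ$ duality mentioned above, or, equivalently, by repeating the same two computations with the roles of the first and second arguments of $a$ swapped.
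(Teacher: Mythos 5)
Your proof is correct and follows exactly the route the paper intends: the paper's proof simply states that the lemma ``follows immediately from the two defining properties of a $\V$-category'', and your argument spells this out, using reflexivity for $A\subseteq\upc A$ and transitivity together with the quantale calculus of $\otimes$ and $\bigvee$ for $\upc\upc A\subseteq\upc A$, with the $\downc$ statements obtained by the $a\leftrightarrow a^\circ$ duality noted just before the lemma. Nothing is missing; your computation for the second inclusion is exactly the bookkeeping the paper leaves to the reader.
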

\begin{proof}
  It follows immediately from the two defining properties of a $\V$-category.
\end{proof}

We call a subset $A\subseteq X$ of a $\V$-category $(X,a)$ \df{increasing}
whenever $A=\upc A$; likewise, $A$ is called \df{decreasing} whenever
$A=\downc A$. Clearly, $\upc A$ is the smallest increasing subset of $X$ which
includes $A$, and similarly for $\downc A$. For later use we record some simple
facts about increasing and decreasing subsets of a $\V$-category.

\begin{lemma}\label{d:lem:3}
  The intersection of increasing (decreasing) subsets of a $\V$-category is
  increasing (decreasing).
\end{lemma}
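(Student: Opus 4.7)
The plan is to reduce the statement to the monotonicity of the operators $\upc$ and $\downc$ with respect to subset inclusion, which is immediate from their definitions. Specifically, if $A \subseteq A'$ then $\bigvee_{x\in A} a(x,y) \le \bigvee_{x\in A'} a(x,y)$ for every $y \in X$, so the condition $k \le \bigvee_{x\in A} a(x,y)$ implies $k \le \bigvee_{x\in A'} a(x,y)$; hence $\upc A \subseteq \upc A'$, and symmetrically $\downc A \subseteq \downc A'$.

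With this in hand, I would take a family $\{A_i\}_{i \in I}$ of increasing subsets and set $B = \bigcap_{i\in I} A_i$. Since $B \subseteq A_i$ for every $i$, the monotonicity observation gives $\upc B \subseteq \upc A_i = A_i$, and intersecting over $i$ yields $\upc B \subseteq B$. Combining with the previously noted inclusion $B \subseteq \upc B$, we conclude that $B = \upc B$, i.e.\ $B$ is increasing.

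The argument for decreasing subsets is entirely analogous, and in fact can also be deduced directly from the increasing case by passing from $a$ to $a^\circ$ and using the identity $\downc^a A = \upc^{a^\circ} A$ recorded in the paragraph preceding the lemma.

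I do not expect any genuine obstacle here; the only thing worth writing out carefully is the monotonicity of $\upc$ and $\downc$, as the rest of the proof is a one-line set-theoretic manipulation.
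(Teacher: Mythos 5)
Your proof is correct; the paper actually states this lemma without proof, treating it as immediate, and your argument (monotonicity of $\upc$ applied to $B \subseteq A_i$, giving $\upc B \subseteq \bigcap_i \upc A_i = B$, combined with $B \subseteq \upc B$ from the preceding lemma) is exactly the standard verification the authors have in mind. The reduction of the decreasing case to the increasing one via $\downc^{a} = \upc^{a^\circ}$ also matches the convention the paper sets up just before the lemma.
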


\begin{lemma}\label{d:lem:4}
  Let $f \colon X\to Y$ be a $\V$-functor. Then the following assertions hold.
  \begin{enumerate}
  \item For every increasing (decreasing) subset $B\subseteq Y$, $f^{-1}(B)$ is
    increasing (decreasing) in $X$.
  \item For every $A\subseteq X$, $f(\upc A)\subseteq \upc f(A)$ and
    $f(\downc A)\subseteq \downc f(A)$.
  \end{enumerate}
\end{lemma}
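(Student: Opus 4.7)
The plan is to prove both assertions directly from the definition of $\upc$ (with $\downc$ following by passing to $a^\circ$, as noted in the text after the definition). Throughout, write $a$, $b$ for the $\V$-category structures on $X$, $Y$, so that $f$ being a $\V$-functor means $a(x,x')\le b(f(x),f(x'))$ for all $x,x'\in X$.

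First I would handle (1). To show $f^{-1}(B)$ is increasing when $B$ is, it suffices to check $\upc^a f^{-1}(B)\subseteq f^{-1}(B)$, since the reverse inclusion holds automatically by the preceding lemma. Given $y\in \upc^a f^{-1}(B)$, I have $k\le \bigvee_{x\in f^{-1}(B)} a(x,y)$. Applying the $\V$-functor inequality termwise and using that $f(x)\in B$ for each $x\in f^{-1}(B)$,
\[
k \;\le\; \bigvee_{x\in f^{-1}(B)} a(x,y) \;\le\; \bigvee_{x\in f^{-1}(B)} b(f(x),f(y)) \;\le\; \bigvee_{z\in B} b(z,f(y)),
\]
so $f(y)\in\upc^b B=B$ and thus $y\in f^{-1}(B)$. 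The decreasing case follows from the identity $\upc^{a^\circ} = \downc^a$ applied in both $X$ and $Y$, noting that $f\colon (X,a^\circ)\to (Y,b^\circ)$ is still a $\V$-functor.

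Next I would prove (2) by a similar direct computation. Take $y\in f(\upc^a A)$, so $y=f(x')$ for some $x'\in X$ with $k\le\bigvee_{x\in A} a(x,x')$. By the $\V$-functor condition,
\[
k \;\le\; \bigvee_{x\in A} a(x,x') \;\le\; \bigvee_{x\in A} b(f(x),f(x')) \;=\; \bigvee_{z\in f(A)} b(z,y),
\]
which shows $y\in\upc^b f(A)$. Again the decreasing version is obtained by replacing $a,b$ with their opposites.

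No real obstacle is expected; the proof is a straightforward unfolding of definitions, with the only mild point being the use of $\upc^{a^\circ} = \downc^a$ to avoid repeating the argument for the decreasing case.
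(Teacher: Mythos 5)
Your proof is correct; the paper states this lemma without proof, treating it as a routine consequence of the definitions, and your argument is exactly the expected one: unfold the definition of $\upc$, apply the $\V$-functor inequality $a(x,y)\le b(f(x),f(y))$ under the supremum, and dispatch the decreasing case via $\downc^a=\upc^{a^\circ}$.
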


In contrast to the situation for ordered sets, the complement of an
increasing set is not necessarily decreasing. This motivates the following
notation.

\begin{definition}
  Let $(X,a)$ be a $\V$-category and $A\subseteq X$. Then $A$ is called
  \df{co-increasing} whenever $A^\complement$ is increasing, and $A$ is called
  \df{co-decreasing} whenever $A^\complement$ is decreasing.
\end{definition}

For a $\V$-category $(X,a)$, we consider the $\V$-category
\[
  \ftH X=\{A\subseteq X\mid A\text{ is increasing\}},
\]
equipped with
\[
  \ftH a(A,B)=\bigwedge_{y\in B}\bigvee_{x\in A}a(x,y),
\]
for all $A,B\in\ftH X$. It is well-known that the formula above defines indeed a
$\V$-category structure, not just on $\ftH X$ but even on the powerset $\ftP X$
of $X$ (for instance, see \cite{ACT10}).

Moreover, we have the following formulas.

\begin{lemma}
  \label{d:lem:6}
  Let $(X,a)$ be a $\V$-category. Then, for all $A,B\subseteq X$, the following
  assertions hold.
  \begin{enumerate}
  \item $k\le \ftH a(A,B)\iff B\subseteq \upc A$.
  \item $\ftH a(A,\upc B)=\ftH a(A,B)$ and $\ftH a(\upc A,B)= \ftH a(A,B)$.
  \end{enumerate}
\end{lemma}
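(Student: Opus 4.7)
The plan is to unpack definitions and then exploit $\V$-category transitivity $a(x,z)\otimes a(z,y)\le a(x,y)$ together with the fact that $\otimes$ distributes over arbitrary joins in the quantale $\V$.

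For part (1), the argument is essentially by definition. Since infima in $\V$ are characterized elementwise, $k\le\bigwedge_{y\in B}\bigvee_{x\in A}a(x,y)$ holds if and only if $k\le\bigvee_{x\in A}a(x,y)$ for every $y\in B$, and the latter is precisely the condition that $y\in\upc^a A$ for every $y\in B$, i.e.\ $B\subseteq\upc A$.

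For part (2), each equality splits into two inequalities. For $\ftH a(A,\upc B)\le\ftH a(A,B)$ and $\ftH a(A,B)\le\ftH a(\upc A,B)$, I will use monotonicity: an infimum over a larger set is smaller, giving the first; a supremum over a larger set is larger, giving the second (together with $A\subseteq\upc A$ and $B\subseteq\upc B$). The two nontrivial inequalities are where the quantale-enriched structure enters. For $\ftH a(A,B)\le\ftH a(A,\upc B)$, fix $y\in\upc B$, so that $k\le\bigvee_{z\in B}a(z,y)$; then using distributivity and transitivity,
\[
\bigvee_{x\in A}a(x,y)\;\ge\;\Bigl(\bigvee_{x\in A}a(x,y)\Bigr)\otimes k
\;\ge\;\bigvee_{z\in B}\bigvee_{x\in A}\bigl(a(x,z)\otimes a(z,y)\bigr)\cdot\ldots
\]
More cleanly, for each $z\in B$ one has $\bigvee_{x\in A}a(x,z)\ge \ftH a(A,B)$, so $\bigvee_{x\in A}a(x,y)\ge\bigvee_{z\in B}\ftH a(A,B)\otimes a(z,y)=\ftH a(A,B)\otimes\bigvee_{z\in B}a(z,y)\ge\ftH a(A,B)\otimes k=\ftH a(A,B)$; taking infimum over $y\in\upc B$ gives the result. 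For $\ftH a(\upc A,B)\le\ftH a(A,B)$, I will show that $\bigvee_{x\in\upc A}a(x,y)=\bigvee_{x\in A}a(x,y)$ for every $y$: for $x\in\upc A$, from $k\le\bigvee_{x'\in A}a(x',x)$ and distributivity of $\otimes$ over joins,
\[
a(x,y)=k\otimes a(x,y)\le\Bigl(\bigvee_{x'\in A}a(x',x)\Bigr)\otimes a(x,y)=\bigvee_{x'\in A}\bigl(a(x',x)\otimes a(x,y)\bigr)\le\bigvee_{x'\in A}a(x',y),
\]
using transitivity in the last step.

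There is no genuine obstacle; the only thing to be careful about is the (possibly non-commutative) order of multiplication when invoking the unit law and transitivity, which dictates whether the $k\le\bigvee a(\cdot,\cdot)$ is applied on the left or on the right of $\otimes$. The arguments for $\upc A$ and $\upc B$ are asymmetric in exactly this respect, but both go through because in a quantale $\otimes$ distributes over arbitrary joins on either side.
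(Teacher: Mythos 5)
Your proof is correct. It differs from the paper's in a small but genuine way: the paper proves the two nontrivial inequalities of part~(2) abstractly, by invoking the fact (recorded just before the lemma) that $\ftH a$ is itself a $\V$-category structure on the whole powerset $\ftP X$, so that transitivity at the level of subsets combines with part~(1) to give, e.g., $\ftH a(A,B)\le \ftH a(A,B)\otimes \ftH a(B,\upc B)\le \ftH a(A,\upc B)$ and $\ftH a(\upc A,B)\le \ftH a(A,\upc A)\otimes \ftH a(\upc A,B)\le \ftH a(A,B)$. You instead unfold the definition and reprove exactly the needed instances of that transitivity at the level of elements of $X$, using distributivity of $\otimes$ over joins. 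The paper's route is shorter given the already-stated fact about $\ftP X$; yours is self-contained and yields the slightly stronger pointwise identity $\bigvee_{x\in\upc A}a(x,y)=\bigvee_{x\in A}a(x,y)$ for every $y$, from which $\ftH a(\upc A,B)=\ftH a(A,B)$ is immediate. Your caution about the order of factors in $\otimes$ is harmless but unnecessary here, since the paper's quantales are assumed commutative. (Incidentally, your version makes explicit the two monotonicity inequalities that the paper labels ``clear''; note that the paper's text lists $\ftH a(\upc A,B)\le \ftH a(A,B)$ among them, where the clear direction is actually $\ftH a(A,B)\le \ftH a(\upc A,B)$ --- your reading is the correct one.)
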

\begin{proof}
  The first assertion is clear, and so are the inequalities
  $\ftH a(A,\upc B)\le \ftH a(A,B)$ and $\ftH a(\upc A,B)\le \ftH a(A,B)$. Furthermore,
  $\ftH a(A,B)\le \ftH a(A,B)\otimes \ftH a(B,\upc B)\le \ftH a(A,\upc B)$ and
  $\ftH a(\upc A,B)\le \ftH a(A,\upc A)\otimes \ftH a(\upc A,B)\le \ftH a(A,B)$.
\end{proof}

\begin{corollary}
  \label{d:cor:2}
  For every $\V$-category $(X,a)$, the $\V$-category $\ftH (X,a)$ is
  separated. Moreover, the underlying order is containment
  $\supseteq$.
\end{corollary}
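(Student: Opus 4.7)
The plan is to deduce both claims directly from Lemma~\ref{d:lem:6}(1) together with the defining property of the elements of $\ftH X$, namely that every $A \in \ftH X$ satisfies $A = \upc A$.

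First I would unpack what it means for the underlying order to be $\supseteq$. By definition, for $A,B \in \ftH X$ one has $A \le B$ in the underlying order precisely when $k \le \ftH a(A,B)$. By Lemma~\ref{d:lem:6}(1) this is equivalent to $B \subseteq \upc A$, and since $A$ is increasing, $\upc A = A$, so the condition reduces to $B \subseteq A$, i.e.\ $A \supseteq B$. This identifies the underlying order on $\ftH X$ with reverse inclusion.

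Separatedness then follows immediately: if $k \le \ftH a(A,B)$ and $k \le \ftH a(B,A)$, the previous paragraph gives $B \subseteq A$ and $A \subseteq B$, hence $A = B$. I would present this as a short paragraph-long proof, with the only subtle point being the remark that although the formula defining $\ftH a$ makes sense on all of $\ftP X$ and fails to be separated there (one only has $A$ and $\upc A$ becoming equivalent), restricting to increasing subsets exactly kills the degenerate identifications because each equivalence class contains a unique increasing representative.

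There is no real obstacle; the whole argument is a two-line application of Lemma~\ref{d:lem:6}(1). The only thing to be careful about is the direction of the induced order, since the convention $x \le y \iff k \le a(x,y)$ combined with the contravariant-looking formula for $\ftH a$ flips inclusion into reverse inclusion, which is precisely the content of the ``moreover'' clause.
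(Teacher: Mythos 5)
Your argument is correct and is exactly the deduction the paper intends: the corollary is stated without proof immediately after Lemma~\ref{d:lem:6}, and its content is precisely your observation that $k \le \ftH a(A,B)$ iff $B \subseteq \upc A = A$ for increasing $A$, which identifies the underlying order with $\supseteq$ and gives antisymmetry. Nothing is missing.
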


For a $\V$-functor $f \colon(X,a)\to(X,a')$, the map
\[
  \ftH f \colon\ftH(X,a) \longrightarrow\ftH(Y,a')
\]
sends an increasing subset $A\subseteq X$ to $\upc f(A)$. Then, by
Lemma~\ref{d:lem:6},
\[
  \ftH a(A,B)\le \ftH a'(f(A),f(B))=\ftH a'(\upc f(A),\upc f(B))
\]
for all $A,B\in \ftH X$. Clearly, for the identity morphism $1_X \colon X\to X$
in $\Cats{\V}$, $\ftH(1_X)$ is the identity morphism on $\ftH X$. Moreover, for
all $f \colon X\to Y$ and $g \colon Y\to Z$ in $\Cats{\V}$ and $A\subseteq X$,
by Lemma~\ref{d:lem:4},
\[
  \upc g(f(A))\subseteq\upc g(\upc f(A))\subseteq \upc\upc g(f(A))\subseteq \upc
  g(f(A));
\]
which proves that the construction above defines a functor
$\ftH \colon\Cats{\V}\to\Cats{\V}$.

We note that this functor is naturally isomorphic to the ``Hausdorff functor''
$\mathcal{H}\colon\Cats{\V}\to\Cats{\V}$ of \cite{Stu10}, witnessed by the
family $(d_X \colon\ftH X \to \mathcal{H}X)_X$ where $A\in\ftH X$ is sent to the
presheaf $\ftH a(A,\{-\})$ on $X$. By \cite[Section~5.2]{Stu10}, each $d_X$ is
fully faithful and surjective; since $\ftH X$ is separated, $d_X$ is an
isomorphism in $\Cats{\V}$. For $f \colon(X,a)\to (Y,a')$ in $\Cats{\V}$,
$A\subseteq X$ increasing and $y\in Y$, we calculate
\begin{align*}
  \bigvee_{z\in A}a'(f(z),y)
  & \le \bigvee_{x\in X}\bigvee_{z\in A} (a(z,x)\otimes a'(f(x),y)) \\
  & \le \bigvee_{x\in X}\bigvee_{z\in A} (a'(f(z),f(x))\otimes a'(f(x),y)) \le
  \bigvee_{z\in A}a'(f(z),y)
\end{align*}
which proves that $(d_X)_X$ is indeed a natural transformation.  Consequently,
the functor $\ftH$ is part of a Kock--Z\"oberlein monad $\mH=\hmonad$ on
$\Cats{\V}$ where
\begin{align*}
  \coyoneda_X \colon X & \longrightarrow \ftH X, & \coyonmult_X \colon\ftH\ftH X & \longrightarrow\ftH X,\\
  x & \longmapsto \upc x & \mathcal{A} &\longmapsto \bigcup \mathcal{A}
\end{align*}
for all $\V$-categories $X$. Clearly, $\coyoneda$ corresponds to the unit of the
``Hausdorff monad'' of \cite{Stu10}; the following remark justifies the
corresponding claim regarding the multiplication.

\begin{remark}\label{d:rem:3}
  For all $\mathcal{A}\in\ftH\ftH X$,
  \[
    \bigcup \mathcal{A} =\{x\in X\mid \exists A\in\mathcal{A}\,.\,x\in A\}
                        =\{x\in X\mid \upc x\in\mathcal{A}\}=\coyoneda_X^{-1}(\mathcal{A}),
  \]
  therefore $\bigcup \mathcal{A}$ is indeed increasing.  Furthermore, we
  conclude that $\coyonmult_X\dashv\ftH \coyoneda_X$ in $\Cats{\V}$.
\end{remark}

\subsection{Coalgebras of Hausdorff polynomial functors on $\Cats{\V}$}
\label{sec:coalg-hpf-V-Cat}

The notion of Kripke polynomial functor is typically formulated in the context
of sets and functions. In this section we study an intuitive
$\Cats{\V}$-counterpart, where the Hausdorff functor on $\Cats{\V}$ takes the
role of the powerset functor on $\SET$. For previous studies of Kripke
polynomial functors see~\cite{Rut00a,BRS09,KKV04}.

\begin{definition}
  Let $\catX$ be a subcategory of $\Cats{\V}$ closed under finite limits and
  colimits such that the Hausdorff functor $\ftH \colon \Cats{\V} \to \Cats{\V}$
  restricts to $\catX$.  We call a functor \df{Hausdorff polynomial} on $\catX$
  if it belongs to the smallest class of endofunctors on $\catX$ that contains
  the identity functor, all constant functors and is closed under composition
  with $\ftH$, products and sums of functors.
\end{definition}

In the sequel, we will see that the category of coalgebras of a Hausdorff
polynomial functor on $\Cats{\V}$ is not necessarily complete. Nevertheless,
thanks to the next theorem, we are some small steps away from proving that
equalisers always exist.

\begin{theorem}[{\cite[Theorem~2.5.24]{Nor19}}]
  \label{p:19}
  Let $\ftF$ be an endofunctor over a cocomplete category $\catX$ that has an
  $(E,M)$-factorisation structure such that $E$ is contained in the class of
  $\catX$-epimorphisms and $\catX$ is $M$-wellpowered.  If $\ftF$ sends
  morphisms in $M$ to morphisms in $M$, then $\CoAlg(\ftF)$ has equalisers.
\end{theorem}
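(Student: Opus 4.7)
The plan is to build the equaliser of two parallel coalgebra morphisms $f, g \colon (A, \alpha) \to (B, \beta)$ as the ``largest'' $M$-subcoalgebra of $(A, \alpha)$ on which $f$ and $g$ coincide. The engine is the diagonal fill-in for the $(E,M)$-factorisation of $\catX$ combined with the hypothesis $\ftF(M) \subseteq M$, which lets $M$-factorisations in $\catX$ be transported to $\CoAlg(\ftF)$.

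First I would recall that the forgetful functor $\CoAlg(\ftF) \to \catX$ creates colimits, so $\CoAlg(\ftF)$ is cocomplete. I would then collect into a class $\mathcal{S}$ all pairs $((S, \sigma), m)$ where $m \colon (S, \sigma) \to (A, \alpha)$ is a coalgebra morphism, $m \in M$, and $f \cdot m = g \cdot m$. By $M$-wellpoweredness $\mathcal{S}$ is essentially a set, and it becomes a small category with the evident morphisms over $(A, \alpha)$. Forming its colimit $(E, \epsilon)$ in $\CoAlg(\ftF)$ yields a canonical cocone map $e \colon (E, \epsilon) \to (A, \alpha)$ with $f \cdot e = g \cdot e$.

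Next, I would factorise $e = m \cdot e'$ in $\catX$ via an intermediate object $I$, with $e' \in E$ and $m \in M$. Since $\ftF m \in M$ by hypothesis, the commutative outer square
\[
\ftF m \cdot (\ftF e' \cdot \epsilon) = \ftF e \cdot \epsilon = \alpha \cdot e = (\alpha \cdot m) \cdot e'
\]
admits a unique diagonal $\iota \colon I \to \ftF I$ satisfying $\iota \cdot e' = \ftF e' \cdot \epsilon$ and $\ftF m \cdot \iota = \alpha \cdot m$. These identities make $e' \colon (E, \epsilon) \to (I, \iota)$ and $m \colon (I, \iota) \to (A, \alpha)$ into coalgebra morphisms; and because $e'$ is epic, $f \cdot m \cdot e' = g \cdot m \cdot e'$ collapses to $f \cdot m = g \cdot m$. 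Hence $((I, \iota), m)$ belongs to $\mathcal{S}$ itself.

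Finally, I would verify the universal property: for any $h \colon (C, \gamma) \to (A, \alpha)$ in $\CoAlg(\ftF)$ with $f \cdot h = g \cdot h$, the $(E, M)$-factorisation of $h$ together with the same diagonal fill-in argument exhibits its intermediate object as a member of $\mathcal{S}$; the colimit property then provides a coalgebra morphism into $(E, \epsilon)$, and postcomposing with $e'$ gives the required factorisation through $m$. Uniqueness is clear from $m$ being monic, a standard consequence of $(E, M)$-factorisation with $E$ contained in the epimorphisms. The main obstacle is the middle step: the hypothesis $\ftF(M) \subseteq M$ is precisely what unlocks the diagonal fill-in, and without it the intermediate object would not carry a coalgebra structure compatible with $\alpha$, so the ``largest'' subcoalgebra would have no candidate representative.
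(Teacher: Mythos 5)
The paper itself gives no argument for this statement---it is imported verbatim from \cite{Nor19}---so there is no in-text proof to compare against. Your construction is the standard one for results of this kind and is, in outline, correct: form the (essentially small, by $M$-wellpoweredness) diagram of all $M$-subcoalgebras of $(A,\alpha)$ equalising $f$ and $g$, take its colimit in $\CoAlg(\ftF)$ (which exists because the forgetful functor to the cocomplete $\catX$ creates colimits), $(E,M)$-factorise the induced map $e$ into $(A,\alpha)$, and use $\ftF(M)\subseteq M$ together with the unique diagonal fill-in to endow the $M$-part $I$ with a coalgebra structure making both factors coalgebra morphisms. The verification that $((I,\iota),m)$ lies in $\mathcal{S}$ and the existence half of the universal property (factorise an arbitrary competitor $h$, fill in, and use the colimit injection) are carried out correctly; the hypothesis $E\subseteq\mathrm{Epi}$ is genuinely used to cancel $e'$ and $e_h$ on the right.

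The one genuine flaw is the final sentence on uniqueness: you assert that $m$ is monic as ``a standard consequence of $(E,M)$-factorisation with $E$ contained in the epimorphisms.'' That implication is false in general. For instance, $(\mathrm{Iso},\mathrm{Mor})$ is a factorisation structure for morphisms on any category, $\mathrm{Iso}$ consists of epimorphisms, yet $\mathrm{Mor}$ need not consist of monomorphisms; the correct general statement is that $E\subseteq\mathrm{Epi}$ is equivalent to $M$ containing all sections, which is strictly weaker than $M\subseteq\mathrm{Mono}$. Your argument therefore needs $M\subseteq\mathrm{Mono}$ as an explicit input in order to conclude that the factorisation through $m$ is unique. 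This is harmless in context---$M$-wellpoweredness is ordinarily formulated for classes of monomorphisms, and every application in the paper takes $M$ to be the initial monomorphisms of $\Cats{\V}$ or of $\CatCHs{\V}$ over $\COMPHAUS$---but the step should be justified by that hypothesis (or added to the statement), not derived from $E\subseteq\mathrm{Epi}$.
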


\begin{corollary}
  \label{p:18}
  The Hausdorff functor $\ftH \colon \Cats{\V} \to \Cats{\V}$ preserves initial
  morphisms.
\end{corollary}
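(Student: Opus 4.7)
The plan is to deduce the statement from the corresponding fact for the full powerset lifting from Proposition~\ref{p:30}. Given an initial $\V$-functor $f\colon (X,a)\to (Y,b)$ (so $a=f^\circ\cdot b\cdot f$), I need to verify that for all $A,A'\in\ftH X$,
\[
  \ftH a(A,A')=\ftH b(\ftH f(A),\ftH f(A'))=\ftH b(\upc f(A),\upc f(A')).
\]
The obstacle to a purely ``formal'' argument is that $\ftH$ is not literally a lifting of the powerset functor along a forgetful functor: the action on morphisms bakes in an up-closure, so that $\ftH f(A)=\upc f(A)$ rather than $f(A)$. This is exactly what has to be neutralised.

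First, I would observe that the formula defining $\ftH a$ makes perfect sense for arbitrary subsets, and in that generality is precisely the $\V$-category structure $\ftP a$ produced by the powerset lifting of Proposition~\ref{p:30}. In particular, the inclusion $\ftH X\hookrightarrow \ftP X$ is (fully faithful hence) initial, and for increasing $A,A'$ we have $\ftH a(A,A')=\ftP a(A,A')$. By Corollary~\ref{p:17} (equivalently, by Proposition~\ref{p:21} together with the Example identifying the powerset lifting with a lax extension), the powerset lifting preserves initial morphisms; applied to $f$, this yields
\[
  \ftH a(A,A')=\ftP a(A,A')=\ftP b(f(A),f(A'))=\ftH b(f(A),f(A')),
\]
using the same identification of $\ftP b$ and $\ftH b$ on the right-hand side (where $f(A),f(A')$ need not be increasing, but the formula still applies).

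Finally, I would invoke Lemma~\ref{d:lem:6}(2) twice — once in each argument — to conclude that
\[
  \ftH b(f(A),f(A'))=\ftH b(\upc f(A),f(A'))=\ftH b(\upc f(A),\upc f(A')),
\]
which chained with the previous equality gives $\ftH a(A,A')=\ftH b(\ftH f(A),\ftH f(A'))$, as desired. The whole argument is essentially formal once the two ingredients are in place: Corollary~\ref{p:17} absorbs all the ``Hausdorff-initiality'' content at the level of the ambient powerset, and Lemma~\ref{d:lem:6}(2) accounts for the difference between $\ftH$ (which restricts to increasing subsets and up-closes images) and the plain powerset lifting.
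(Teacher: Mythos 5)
Your argument is correct and is essentially the paper's own proof: the paper likewise factors $\ftH f$ through the powerset lifting as $\ftH(X,a)\hookrightarrow\ftP(X,a)\xrightarrow{\ftP f}\ftP(Y,b)\xrightarrow{\upc(-)}\ftH(Y,b)$, using Corollary~\ref{p:17} for the middle leg and Lemma~\ref{d:lem:6} for the up-closure map, exactly the two ingredients you isolate. Your version merely phrases the composition of initial morphisms as a chain of equalities of $\V$-category structures, which is the same computation.
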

\begin{proof}
  Let $f \colon (X,a) \to (Y,b)$ be an initial morphism in $\Cats{\V}$.
  Consider the map $\upc (-) \colon \ftP (Y,b) \to \ftH (Y,b)$ defined by
  $A \mapsto \upc fA$.
  By Lemma~\ref{d:lem:6}, $\upc (-)$ is an initial morphism in $\Cats{\V}$.
  Therefore, by Corollary~\ref{p:17}, we can express $\ftH f$ as the following
  composition of initial morphisms
  \begin{equation*}
    \begin{tikzcd}[row sep=large, column sep=large,ampersand replacement=\&]
      \ftH (X,a) \ar[r,"\ftH f"] \ar[d]  \& \ftH (Y,b) \\
      \ftP (X,a) \ar[r, "\ftP f"']       \& \ftP (Y,b) \ar[u, "\upc (-)"']
   \end{tikzcd}.
 \end{equation*}
\end{proof}

\begin{proposition}
  \label{p:20}
  The Hausdorff functor $\ftH \colon \Cats{\V} \to \Cats{\V}$ preserves initial
  monomorphisms.
\end{proposition}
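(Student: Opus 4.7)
My plan is to leverage Corollary~\ref{p:18}, which already tells us that $\ftH$ preserves initial morphisms, and complement it by showing that $\ftH f$ inherits injectivity from $f$. Since monomorphisms in $\Cats{\V}$ coincide with injective $\V$-functors (the forgetful functor $\Cats{\V}\to\SET$ has a left adjoint and hence preserves monos), it will be enough to verify that the map $\ftH f \colon A\mapsto \upc f(A)$ is injective on underlying sets whenever $f$ is both initial and injective.

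The key identity I would establish is that, for every initial monomorphism $f \colon (X,a)\to(Y,b)$ and every increasing $A\subseteq X$,
\[
  f^{-1}(\upc f(A))=A.
\]
The inclusion $A\subseteq f^{-1}(\upc f(A))$ is immediate from $f(A)\subseteq \upc f(A)$. For the reverse inclusion, I would take $x\in X$ with $f(x)\in \upc f(A)$, i.e.\ $k\le\bigvee_{x'\in A}b(f(x'),f(x))$, and then invoke initiality of $f$ (which gives $a=f^\circ\cdot b\cdot f$, hence $b(f(x'),f(x))=a(x',x)$) to rewrite this as $k\le\bigvee_{x'\in A}a(x',x)$. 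Since $A$ is increasing, this forces $x\in \upc A=A$. Notice that injectivity of $f$ is not actually needed for this step; it enters only through the next one.

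With this identity in hand, injectivity of $\ftH f$ follows directly: if $\upc f(A)=\upc f(A')$ for $A,A'\in\ftH X$, then $A=f^{-1}(\upc f(A))=f^{-1}(\upc f(A'))=A'$. Combined with Corollary~\ref{p:18}, this yields that $\ftH f$ is both initial and a monomorphism, which is precisely the claim. I do not foresee any real obstacle; the whole argument amounts to noticing that initiality lets us ``pull back'' the up-closure condition along $f$, and the mild subtlety is simply remembering that we are working with increasing subsets, so that $\upc A=A$ closes the computation.
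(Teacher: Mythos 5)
Your proof is correct, but it takes a different route from the paper's. The paper also starts from Corollary~\ref{p:18} ($\ftH$ preserves initial morphisms), but then simply invokes Corollary~\ref{d:cor:2}: since $\ftH(X,a)$ is always separated, and an initial $\V$-functor with separated domain is automatically injective (initiality forces $a(A,A')\geq k$ and $a(A',A)\geq k$ whenever the images agree, so $A=A'$ by separatedness), the conclusion is immediate. You instead verify injectivity of $\ftH f$ by hand via the identity $f^{-1}(\upc f(A))=A$ for increasing $A$, which is a correct and pleasantly explicit computation: initiality gives $b(f(x'),f(x))=a(x',x)$, so membership of $f(x)$ in $\upc f(A)$ pulls back to membership of $x$ in $\upc A=A$. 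Both arguments in fact prove the stronger statement that $\ftH$ sends \emph{every} initial morphism to an initial monomorphism; your parenthetical remark that injectivity of $f$ ``enters through the next step'' is therefore slightly off --- it is never used at all --- but this is harmless. The paper's argument is shorter and reuses a structural fact about $\ftH X$ (separatedness, with underlying order $\supseteq$) that is needed elsewhere anyway; yours is more elementary and makes the mechanism visible without appealing to separatedness.
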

\begin{proof}
  We already know from Corollary~\ref{p:18} that $\ftH$ preserves initial
  morphisms, and from Corollary~\ref{d:cor:2} that the image by $\ftH$ of every
  $\V$-category is separated.  Therefore, $\ftH$ preserves initial
  monomorphisms.
\end{proof}

\begin{proposition}
  The category of coalgebras of a Hausdorff polynomial functor
  $\ftH \colon \Cats{\V} \to \Cats{\V}$ has equalisers.
\end{proposition}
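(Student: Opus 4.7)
The plan is to apply Theorem~\ref{p:19} with $\catX = \Cats{\V}$, taking $E$ to be the class of surjective $\V$-functors and $M$ the class of initial monomorphisms (i.e.\ embeddings). First I would collect the standing hypotheses: $\Cats{\V}$ is topological over $\SET$ and hence complete and cocomplete; it carries the $(E,M)$-factorisation structure in which every $\V$-functor factors as a surjective $\V$-functor followed by the inclusion of its image equipped with the initial structure; surjective $\V$-functors are epimorphisms; and $\Cats{\V}$ is $M$-wellpowered, since $M$-subobjects correspond to subsets of the underlying set equipped with the initial structure. These are all standard.

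The remaining task is to show that any Hausdorff polynomial functor $\ftF$ sends initial monomorphisms to initial monomorphisms, which I would prove by structural induction on the construction of $\ftF$. The identity functor and constant functors are trivial, the case $\ftF=\ftH$ is precisely Proposition~\ref{p:20}, and for a composition $\ftF_1\cdot\ftF_2$ the property transfers immediately since the composite of two initial monomorphisms is an initial monomorphism.

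The only mildly non-trivial cases are sums and products of functors, for which one must verify: if $f_i\colon X_i\to Y_i$ ($i=1,2$) are initial monomorphisms in $\Cats{\V}$, then so are $f_1\times f_2$ and $f_1+f_2$. Using the explicit descriptions of finite products in $\Cats{\V}$ (pointwise infimum of the structures pulled back along the projections) and of finite coproducts (disjoint union with the bottom of $\V$ between distinct summands), this reduces to a direct computation: both maps are clearly injective, and initiality is checked by evaluating the $\V$-category structures on either side and using initiality of each $f_i$ componentwise.

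With preservation of initial monomorphisms established for every Hausdorff polynomial functor, Theorem~\ref{p:19} applies and yields equalisers in $\CoAlg(\ftF)$. The only conceptually non-trivial input is Proposition~\ref{p:20} (that $\ftH$ itself preserves initial monomorphisms); everything else is routine bookkeeping about limits and colimits in $\Cats{\V}$, so I do not anticipate any real obstacle.
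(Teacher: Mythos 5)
Your proposal is correct and follows essentially the same route as the paper: both apply Theorem~\ref{p:19} to the (surjective, initial mono)-factorisation structure on $\Cats{\V}$, with Proposition~\ref{p:20} supplying the key case of $\ftH$ and the remaining cases (identity, constants, composition, sums, products) handled by the structural induction the paper dismisses as ``standard arguments.'' Your explicit verification for sums and products is exactly the routine computation the paper has in mind.
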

\begin{proof}
  Being a topological category over $\SET$, the category $\Cats{\V}$ is
  (surjective, initial mono)-structured and satisfies all conditions necessary
  to apply Theorem~\ref{p:19}. By Proposition~\ref{p:20}, the Hausdorff functor
  preserves initial monomorphisms and the remaining cases follow from standard
  arguments.  Therefore, we can apply Theorem~\ref{p:19}.
\end{proof}

In the remainder of the section, we show that the Hausdorff functor does not
admit a terminal coalgebra. This part is inspired by \cite{DG62}.

Given elements $x,y$ of a $\V$-category $(X,a)$, we write $x \prec y$ if
$k \leq a(x,y)$ and $a(y,x) = \bot$, and we denote by $\eupc x$ the set
$\{y \in X \mid x \prec y \}$.

\begin{proposition}
  \label{p:8}
  Let $(X,a)$ be a $\V$-category.  Then, for every $x,y \in X$, the following
  assertions hold.
  \begin{enumerate}
    \item The set $\eupc x$ is increasing.
    \item $\upc x \prec \eupc x$ in $\ftH(X,a)$.
    \item For every initial $\V$-functor $(X,a) \to (Y,b)$, if $x \prec y$ then
      $fx \prec fy$.
  \end{enumerate}
\end{proposition}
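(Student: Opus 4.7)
The plan is to verify the three assertions by direct calculation, exploiting the transitivity $a(u,v)\otimes a(v,w)\le a(u,w)$, the unit condition $k\le a(u,u)$, and the fact that in a quantale $\otimes$ distributes over arbitrary joins on both sides. The only non-obvious part is (1); (2) and (3) are essentially immediate once (1) is in place.

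For (1), I would show that $\eupc x$ is increasing by verifying that $\upc(\eupc x)\subseteq \eupc x$. So suppose $z\in X$ satisfies $k\le\bigvee_{w\in\eupc x}a(w,z)$; I must prove $k\le a(x,z)$ and $a(z,x)=\bot$. For the first, I chain transitivity with the inequality $k\le a(x,w)$ (valid for each $w\in\eupc x$) to get
\[
  a(x,z)\ge \bigvee_{w\in\eupc x}(a(x,w)\otimes a(w,z))\ge \bigvee_{w\in\eupc x}a(w,z)\ge k.
\]
For the second, for each $w\in\eupc x$ I have $a(w,z)\otimes a(z,x)\le a(w,x)=\bot$; distributing $\otimes$ over the join yields $\bigl(\bigvee_{w\in\eupc x}a(w,z)\bigr)\otimes a(z,x)=\bot$, and since the first factor is $\ge k$, I conclude $a(z,x)\le\bot$.

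For (2), I would compute both components of $\prec$ in $\ftH(X,a)$ directly from $\ftH a(A,B)=\bigwedge_{y\in B}\bigvee_{v\in A}a(v,y)$. Since $x\in\upc x$, for any $y\in\eupc x$ the inner join is at least $a(x,y)\ge k$, which gives $k\le\ftH a(\upc x,\eupc x)$. Conversely, choosing the term $y=x\in\upc x$ in $\ftH a(\eupc x,\upc x)$ and using $a(w,x)=\bot$ for every $w\in\eupc x$ shows that this particular inner join already vanishes, so the infimum is $\bot$.

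For (3), I would invoke that an initial $\V$-functor $f\colon(X,a)\to(Y,b)$ is precisely a fully faithful one, meaning $a=f^\circ\cdot b\cdot f$, i.e.\ $a(u,v)=b(fu,fv)$ for all $u,v\in X$. Thus $x\prec y$, i.e.\ $k\le a(x,y)$ and $a(y,x)=\bot$, translates verbatim into $k\le b(fx,fy)$ and $b(fy,fx)=\bot$, which is $fx\prec fy$. The only nuance worth flagging is the quantale distributivity used in (1); the rest is bookkeeping.
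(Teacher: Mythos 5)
Your proof is correct and follows essentially the same route as the paper's: the paper handles (1) by writing $\eupc x = \upc x \cap a(-,x)^{-1}\{\bot\}$ and invoking closure of increasing sets under intersection, which is exactly the two-part verification you carry out directly (your second chain is the argument the paper leaves implicit for why $a(-,x)^{-1}\{\bot\}$ is increasing). Your computations for (2) and (3) coincide with the paper's one-line observations.
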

\begin{proof}
  The set $\eupc x$ is the intersection of the increasing sets $\upc x$ and
  $a(-,x)^{-1}\{\bot\}$.  Regarding the second affirmation, observe that
  $\ftH a(\eupc x, \upc x) \leq \bigvee_{y \in \eupc x} a(y,x) = \bot$. The
  third affirmation is trivial.
\end{proof}

\begin{theorem}
  \label{d:thm:1}
  Let $\V$ be a non-trivial quantale, and $(X,a)$ a $\V$-category.
  A morphism of type $\ftH(X,a) \to (X,a)$ cannot be an embedding.
\end{theorem}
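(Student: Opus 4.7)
The plan is to proceed by contradiction: suppose that $f\colon \ftH(X,a) \to (X,a)$ is an embedding, i.e.\ an initial monomorphism in $\Cats{\V}$. Then $f$ is injective and $a(fA,fB) = \ftH a(A,B)$ for all $A, B \in \ftH X$, so by Proposition~\ref{p:8}(iii) it preserves the strict relation $\prec$; this is the only place where initiality beyond plain monomorphism will be exploited.

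The approach is a $\V$-enriched variant of Cantor's diagonal argument, modelled on the order-theoretic generalisation of \cite{DG62}. First I would form the ``diagonal'' subset
$$S := \{fA \mid A \in \ftH X,\; fA \notin A\} \subseteq X,$$
set $D := \upc S$, and observe that $D$ is increasing by construction, so $D \in \ftH X$ and $fD \in X$ is well-defined. Two cases arise. If $fD \notin D$, then choosing $A := D$ gives $fA = fD \notin A$, so $fD \in S \subseteq D$, contradicting $fD \notin D$; hence necessarily $fD \in D = \upc S$. Unfolding the $\V$-categorical up-closure and using initiality of $f$, this becomes
$$k \;\le\; \bigvee_{A:\; fA \notin A} a(fA, fD) \;=\; \bigvee_{A:\; fA \notin A} \ftH a(A, D).$$

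From here, for each $A$ with $fA \notin A$ one has $fA \in S \subseteq D$ together with $fA \notin A = \upc A$; by Lemma~\ref{d:lem:6} this forces $\ftH a(A, D) \not\ge k$. Specialising the inner infimum in $\ftH a(A, D) = \bigwedge_{y \in D}\bigvee_{x \in A} a(x, y)$ at the witness $y = fA$ gives the sharper bound $\ftH a(A, D) \le \bigvee_{x \in A} a(x, fA)$, which fails to reach $k$ precisely because $fA \notin \upc A = A$. Combined with the non-triviality $\bot \ne k$, these bounds are meant to force the supremum above to fall below $k$, yielding the desired contradiction.

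The main obstacle is this last step: in an arbitrary quantale a supremum of elements individually not above $k$ can still reach $k$, so bounding each term does not immediately bound the supremum. To push the argument through I expect to use Proposition~\ref{p:8}(ii) more crucially, perhaps by refining the diagonal so as to replace $\upc$ with $\eupc$; the preserved strict relation $\prec$ then transports across $f$ and should produce an internal inequality $k \le \bot$ in $\V$, contradicting non-triviality.
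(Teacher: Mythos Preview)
Your argument has a real gap, and you have put your finger on it yourself: in a general quantale, a supremum of elements none of which lies above $k$ may still dominate $k$, so the case $fD \in D$ cannot be closed by bounding each summand $\ftH a(A,D)$ individually. Your proposed fix of swapping $\upc$ for $\eupc$ in the definition of $D$ does not work directly either, since $\eupc$ is not a closure operator and $\eupc S$ need not lie in $\ftH X$; the closing paragraph is a hope rather than a proof.

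The paper sidesteps this obstacle by trading the explicit diagonal for a Knaster--Tarski fixed point. The underlying order of $\ftH X$ is reverse inclusion (Corollary~\ref{d:cor:2}), making it a complete lattice, and the composite $I \mapsto \upc f(I)$ is monotone; let $A$ be its greatest post-fixed point and put $x := f(A)$, so that $A = \upc x$. Now Proposition~\ref{p:8}(ii) gives $\upc x \prec \eupc x$ in $\ftH X$, and initiality of $f$ transports this via Proposition~\ref{p:8}(iii) to $x = f(\upc x) \prec f(\eupc x)$, which says precisely that $f(\eupc x) \in \eupc x$. Hence $\eupc x$ is itself a post-fixed point, forcing $\eupc x \le A = \upc x$ in $\ftH X$, i.e.\ $\upc x \subseteq \eupc x$. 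But $x \in \upc x$, while $a(x,x) \ge k \neq \bot$ rules out $x \in \eupc x$; this is the contradiction. The point is that the fixed-point formulation isolates a single increasing set on which the strict relation $\prec$ does all the work, so no estimate of a supremum against $k$ is ever needed.
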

\begin{proof}
  Suppose that there exists an embedding $\phi \colon \ftH(X,a) \to (X,a)$.
  We will see that this implies that there exists $x \in X$ such that
  $\upc x = \eupc x$, which is a contradiction as $\V$ is non-trivial.

  Since $\ftH X$ is a complete lattice the map
  $\coyoneda_X \cdot \phi \colon \ftH X \to \ftH X$
  has a greatest fixed point $A$ that is given by
  \[
    \bigvee \{ I \in \ftH X \mid I \leq \upc\phi(I) \}.
  \]

  We claim that $x = \phi(A)$ has the desired property. The morphism $\phi$ is
  initial and $\upc x \prec \eupc x$, hence, by Proposition~\ref{p:8},
  $x = \phi(\upc x) \prec \phi(\eupc x)$ and, consequently,
  $\eupc x \leq \upc\phi(\eupc x)$.  Therefore, $\eupc x \leq \upc x$ because
  $\upc x$ is the greatest fixed point.
\end{proof}
\begin{corollary}
  Let $\V$ be a non-trivial quantale.
  The Hausdorff functor $\ftH \colon \Cats{\V} \to \Cats{\V}$ does not admit fixed
  points.
\end{corollary}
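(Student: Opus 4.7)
The plan is to deduce this corollary directly from Theorem~\ref{d:thm:1}, which has just been established. Suppose, for contradiction, that $\ftH\colon\Cats{\V}\to\Cats{\V}$ admits a fixed point; that is, there exist a $\V$-category $(X,a)$ and an isomorphism $\phi\colon\ftH(X,a)\to(X,a)$ in $\Cats{\V}$. The aim is to observe that $\phi$ automatically qualifies as an ``embedding'' in the sense used in Theorem~\ref{d:thm:1}, and then invoke that theorem to obtain a contradiction.

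The verification that $\phi$ is an embedding is essentially bookkeeping: in the topological category $\Cats{\V}$ over $\SET$, any isomorphism is bijective on underlying sets, and since its inverse $\phi^{-1}$ is also a $\V$-functor, $\phi$ is both initial and injective — hence an initial monomorphism, which is precisely what ``embedding'' means here (compare Proposition~\ref{p:20} and the use of initiality in the proof of Theorem~\ref{d:thm:1}, where all that is exploited about $\phi$ is that it preserves and reflects the relation $\prec$). With this observation in hand, the existence of $\phi$ directly contradicts Theorem~\ref{d:thm:1}, proving the corollary.

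I do not expect any real obstacle here: the entire content of the corollary is that ``fixed point'' is a stronger condition than admitting an embedding from $\ftH(X,a)$ into $(X,a)$, so once the identification iso $\Rightarrow$ embedding is made explicit the statement is immediate. The genuine work was done in Theorem~\ref{d:thm:1}, whose Cantor-style diagonal argument — building the greatest fixed point of $\coyoneda_X\cdot\phi$ and extracting from it an element $x$ with $\upc x=\eupc x$ — is what rules out even the weaker possibility of an embedding.
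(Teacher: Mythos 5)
Your proposal is correct and matches the paper's (implicit) argument: the corollary is stated without proof precisely because an isomorphism $\phi\colon\ftH(X,a)\to(X,a)$ is in particular an initial injective $\V$-functor, hence an embedding, contradicting Theorem~\ref{d:thm:1}. The bookkeeping you supply — that the inverse being a $\V$-functor forces $\ftH a(A,B)=a(\phi(A),\phi(B))$, so $\phi$ is initial and injective — is exactly the intended justification.
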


\begin{remark}
 If $\V$ is trivial, that is $\V = \{k\}$, then $\ftH \colon \SET \to \SET$ is the
 functor that sends every set $X$ to the set $\{X\}$.
 Therefore, the fixed points of $\ftH \colon \SET \to \SET$ are the terminal objects.
\end{remark}

\begin{corollary}
  Let $\V$ be a non-trivial quantale. The Hausdorff functor $\ftH
    \colon\Cats{\V} \to \Cats{\V}$ does not admit a terminal coalgebra, neither
  does any possible restriction to a full subcategory of $\Cats{\V}$.
\end{corollary}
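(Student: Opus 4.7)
The plan is to reduce the corollary to the preceding one (no fixed points) by invoking Lambek's lemma, which is already mentioned in the introduction. Specifically, if $(X,c)$ is a terminal coalgebra of $\ftH \colon \Cats{\V} \to \Cats{\V}$, then by Lambek's lemma the structure map $c \colon X \to \ftH X$ is an isomorphism in $\Cats{\V}$, so in particular $X$ is a fixed point of $\ftH$. The previous corollary then rules this out, which handles the first half of the statement.

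For the second half, let $\catX$ be any full subcategory of $\Cats{\V}$ to which $\ftH$ restricts, i.e.\ $\ftH(\catX) \subseteq \catX$, and suppose that the restriction $\ftH|_\catX$ admits a terminal coalgebra $(X,c)$. Applying Lambek's lemma inside $\catX$, the map $c \colon X \to \ftH X$ is an isomorphism in $\catX$. Because $\catX$ is full in $\Cats{\V}$, the inverse of $c$ computed in $\catX$ is also the inverse of $c$ in $\Cats{\V}$, so $c$ is likewise an isomorphism in $\Cats{\V}$; in particular $c^{-1} \colon \ftH X \to X$ is an isomorphism, hence an embedding. This again contradicts Theorem~\ref{d:thm:1} (equivalently, the previous corollary), finishing the argument.

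No genuine obstacle is expected: the entire content has already been established in Theorem~\ref{d:thm:1} and the preceding corollary. The only point that deserves a brief remark is fullness of $\catX$, which is precisely what allows the isomorphism obtained from Lambek's lemma inside $\catX$ to transport to an isomorphism (and hence a fixed point) in $\Cats{\V}$; without fullness one could a priori lose the inverse when moving from $\catX$ into $\Cats{\V}$. Since the proof is essentially a one-line deduction from the previous results, I would present it as a short paragraph rather than a displayed argument.
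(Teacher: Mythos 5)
Your argument is correct and is exactly the route the paper intends: the corollary is stated without proof precisely because it follows from Lambek's lemma (cited in the introduction) together with Theorem~\ref{d:thm:1}, since the structure map of a terminal coalgebra is an isomorphism and its inverse $\ftH X \to X$ would be an embedding. One small quibble with your closing remark: fullness is not actually what transports the isomorphism, since for \emph{any} subcategory the inverse computed there is already a $\Cats{\V}$-morphism inverting $c$, so the conclusion would hold for non-full subcategories to which $\ftH$ restricts as well.
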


\begin{example}
  In particular, the (non-symmetric) Hausdorff functor on $\MET$ does not admit
  a terminal coalgebra, and the same applies to its restriction to the full
  subcategory of compact metric spaces.  Passing to the symmetric version does
  not remedy the situation. Here, for a symmetric compact metric space $(X,a)$,
  we consider now the metric $\ftH a$ defined by
  \begin{equation*}
    \ftH a(A,A')
    =\max\left\{\sup_{x\in A}\inf_{x'\in A'}a(x,x'), \sup_{x\in A'}\inf_{x'\in A}a(x',x)\right\}
  \end{equation*}
  on the set $\ftH X$ of all closed subsets.  Note that
  $\ftH a(\varnothing,A)=\infty$, for every non-empty subset $A\subseteq
  X$. Then, if $s \colon (\ftH X,\ftH a)\to (X,a)$ is an isomorphism, we
  construct recursively a sequence $(x_n)_{n\in\N}$ in $X$ as follows:
  \begin{equation*}
    x_0=s(\varnothing)\quad\text{and}\quad x_{n+1}=s(\{x_n\}).
  \end{equation*}
  Then $a(x_m,x_l)=\infty$, for all $m,k\in\N$ with $m \neq k$; which
  contradicts compactness of $(X,a)$.
\end{example}

\section{Hausdorff polynomial functors on $\CatCHs{\V}$}
\label{sec:hausd-polyn-funct}

In Section~\ref{sec:coalg-hpf-V-Cat} we saw that the image of a $\V$-category
under the Hausdorff functor $\ftH$ on $\Cats{\V}$ has ``too many'' elements for
$\ftH$ to admit a terminal coalgebra. To filter them, in this section we add a
topological component to our study of $\Cats{\V}$.

\subsection{Adding topology}
\label{sec:adding-topology}

To ``add topology'', we use the ultrafilter monad $\mU=\umonad$ on $\SET$.
Furthermore:

\begin{assumption}\label{d:ass:4}
  Throughout this section we assume that $\V$ is completely distributive
  quantale (see \cite{Ran52,FW90}).
\end{assumption}

Then
\begin{equation*}
  \xi \colon \ftU \V \longrightarrow\V, \quad
  \fv \longmapsto \bigwedge_{A\in\fv}\bigvee A
\end{equation*}
is the structure of an $\mU$-algebra on $\V$, and represents the convergence of
a compact Hausdorff topology. Therefore, as discussed at the end of
Section~\ref{sec:gener-cons}, we obtain a lax extension of the ultrafilter monad
to $\Rels{\V}$ that induces a monad on $\Cats{\V}$.  Its algebras are
$\V$-categories equipped with a \emph{compatible} compact Hausdorff topology
(see~\cite{Tho09,HR18});  we call them \df{$\V$-categorical compact Hausdorff
spaces}, and denote the corresponding Eilenberg--Moore category by

\begin{equation*}
  \CatCHs{\V}.
\end{equation*}

Then we have a natural forgetful functor
\begin{equation*}
  \CatCHs{\V}\longrightarrow \ORDCH
\end{equation*}
sending $(X,a,\alpha)$ to $(X,\le,\alpha)$ where
\begin{equation*}
  x\le y\quad\text{whenever}\quad k\le a(x,y).
\end{equation*}
Moreover, $(\V,\hom,\xi)$ is a $\V$-categorical compact Hausdorff space with
underlying ordered compact Hausdorff space $(\V,\le,\xi)$, where $\leq$ is the
order of $\V$. We denote by $\xi_\le$ the induced stably compact topology. We
provide now some information on the topologies of $\V$.

\begin{remark}
  \label{d:rem:6}
  Since $\V$ is in particular a continuous lattice, the convergence $\xi$ is the
  convergence of the Lawson topology of $\V$ (see
  \cite[Proposition~VII-3.10]{GHK+03}). A subbasis for this topology is given by
  the sets
  \begin{equation*}
    \{u\in\V\mid v\ll u\}\quad\text{and}\quad \{u\in\V\mid v\nleq u\}\qquad (v\in\V),
  \end{equation*}
  where $\ll$ denotes the way-below relation of $\V$. Furthermore, by
  \cite[Proposition~2.3.6]{AJ95}, the sets
  \begin{equation*}
    \{u\in\V\mid v\ll u\}\qquad (v\in\V)
  \end{equation*}
  form a basis for the Scott topology of $\V$. By the proof of
  \cite[Lemma~V-5.15]{GHK+03}, the sets
  \begin{equation*}
    \{u\in\V\mid v\nleq u\} = (\upc v)^\complement\qquad (v\in\V)
  \end{equation*}
  form a subbasis of the dual of Scott topology of $\V$, which is precisely
  $\xi_\le$.

  Since, moreover, $\V$ is (ccd), we have the following.
  \begin{itemize}
  \item By \cite[Lemma~VII-2.7]{GHK+03} and \cite[Proposition~VII-2.10]{GHK+03},
    the Lawson topology of $\V$ coincides with the Lawson topology of $\V^\op$,
    and the set
    \begin{equation*}
      \{ \upc u \mid u \in \V \} \cup \{ \downc u \mid u \in \V \}
    \end{equation*}
    is a subbasis for the closed sets of this topology which is known as the
    interval topology.
  \item Therefore the Scott topology of $\V$ coincides with the dual of the
    Scott topology of $\V^\op$; in particular, the sets $\downc v$ ($v\in\V$)
    form a subbasis for the closed sets of the Scott topology of $\V$.
  \item Finally, also the sets
    \begin{equation*}
      \{u\in\V\mid v\lll u\}\qquad (v\in\V)
    \end{equation*}
    form a subbasis of the Scott topology of $\V$.
  \end{itemize}
\end{remark}

We aim now at $\V$-categorical generalisations of some results of \cite{Nac50}
regarding ordered compact Hausdorff spaces. Firstly, we recall
\cite[Proposition~3.22]{HR18}:

\begin{proposition}\label{d:prop:4}
  For a $\V$-category $(X,a)$ and a $\mU$-algebra $(X,\alpha)$ with the same
  underlying set $X$, the following assertions are equivalent.
  \begin{tfae}
  \item $\alpha \colon \ftU(X,a)\to(X,a)$ is a $\V$-functor.
  \item $a \colon (X,\alpha)\times(X,\alpha)\to(\V,\xi_\le)$ is continuous.
  \end{tfae}
\end{proposition}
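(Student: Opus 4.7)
The plan is to rewrite each side as a single common pointwise inequality phrased in terms of ultrafilter convergence, and then read off the equivalence. By the construction recalled at the end of Section~\ref{sec:gener-cons}, the lax extension of $\ftU$ to $\Rels{\V}$ built from the $\mU$-algebra $\xi \colon \ftU\V\to\V$ equips $\ftU(X,a)$ with the $\V$-category structure
\[
  \widehat{\ftU} a(\fx,\fy) = \bigvee\left\{\xi(\ftU a(\fw)) \mid \fw \in \ftU(X\times X),\, \ftU\pi_1(\fw)=\fx,\, \ftU\pi_2(\fw)=\fy\right\}.
\]
Hence (i) is equivalent to the pointwise condition that
\[
  \xi(\ftU a(\fw)) \le a(\alpha\ftU\pi_1(\fw),\alpha\ftU\pi_2(\fw))
\]
for every $\fw \in \ftU(X\times X)$.

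Next I would recast (ii) in the same language. In the product compact Hausdorff space $(X,\alpha)\times (X,\alpha)$, an ultrafilter $\fw$ on $X\times X$ converges to exactly $(\alpha\ftU\pi_1(\fw),\alpha\ftU\pi_2(\fw))$, so continuity of $a$ amounts to requiring that, for every such $\fw$, the ultrafilter $\ftU a(\fw)$ converges to $a(\alpha\ftU\pi_1(\fw),\alpha\ftU\pi_2(\fw))$ in $\xi_\le$. The heart of the argument is therefore to identify convergence in $\xi_\le$ concretely: I would establish that for an ultrafilter $\fv$ on $\V$ and $u\in\V$,
\[
  \fv\to u \text{ in } \xi_\le \iff \xi(\fv)\le u.
\]
Granted this, (ii) becomes the very pointwise condition just written, and the equivalence with (i) is immediate.

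I expect the obstacle to lie entirely in the characterisation of $\xi_\le$-convergence. By Remark~\ref{d:rem:6}, the sets $(\upc v)^\complement$ form a subbasis of $\xi_\le$, so $\fv\to u$ in $\xi_\le$ iff $\upc v\notin \fv$ whenever $v\nleq u$. The implication $\xi(\fv)\le u \Rightarrow \fv\to u$ is essentially combinatorial: if $v\nleq u$ then $v\nleq \xi(\fv)$, so some $A\in\fv$ satisfies $v\nleq\bigvee A$, whence $\upc v\cap A = \varnothing$ and so $\upc v\notin\fv$. The converse uses the complete distributivity of $\V$ essentially: given $v\nleq u$, from the decomposition $v=\bigvee\{v'\mid v'\lll v\}$ one extracts $v'\lll v$ with $v'\nleq u$; then $A:=(\upc v')^\complement\in\fv$, and the defining property of $\lll$ applied to $A$ forces $v\nleq \bigvee A$, so $v\nleq \xi(\fv)$ and hence $\xi(\fv)\le u$.
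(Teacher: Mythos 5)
Your argument is correct. Note that the paper itself gives no proof here — it simply recalls the statement from \cite[Proposition~3.22]{HR18} — so there is nothing in the text to compare against; but your reduction is exactly the natural one and checks out. Unfolding the lax-extension formula for $\widehat{\ftU}a$ recalled at the end of Section~\ref{sec:gener-cons} does turn (i) into the pointwise condition $\xi(\ftU a(\fw))\le a(\alpha\ftU\pi_1(\fw),\alpha\ftU\pi_2(\fw))$, and the Barr-style description of continuity together with uniqueness of ultrafilter limits in the compact Hausdorff product turns (ii) into the statement that $\ftU a(\fw)$ converges in $\xi_\le$ to that same element. The whole content then sits in your lemma that $\fv\to u$ in $\xi_\le$ iff $\xi(\fv)\le u$ — which is precisely the standard correspondence between an ordered compact Hausdorff space and its induced stably compact topology — and both directions of your verification are sound: the forward implication is the disjointness argument with $\upc v$, and the converse correctly invokes complete distributivity (Assumption~\ref{d:ass:4}) via $v=\bigvee\{v'\mid v'\lll v\}$ and the defining property of the totally-below relation applied to $A=(\upc v')^\complement\in\fv$. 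The only inputs you use beyond this are the subbasis for $\xi_\le$ from Remark~\ref{d:rem:6} and the fact that convergence need only be tested on subbasic neighbourhoods, both of which are legitimate.
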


\begin{corollary}\label{d:cor:1}
  For a $\V$-category $(X,a)$ and a $\mU$-algebra $(X,\alpha)$ with the same
  underlying set $X$, $(X,a,\alpha)$ is a $\V$-categorical compact Hausdorff
  space if and only if, for all $x,y\in X$ and $u\in\V$ with $u\not\leq a(x,y)$,
  there exist neighbourhoods $V$ of $x$ and $W$ of $y$ so that, for all
  $x'\in V$ and $y'\in W$, $u\not\le a(x',y')$.
\end{corollary}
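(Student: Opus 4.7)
The plan is to use Proposition~\ref{d:prop:4} to translate the compact-Hausdorff-space condition into the continuity of $a\colon(X,\alpha)\times(X,\alpha)\to(\V,\xi_{\le})$, and then unpack this continuity using a convenient subbasis for the topology $\xi_{\le}$.

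First, I invoke Proposition~\ref{d:prop:4} so that it remains to show the equivalence between continuity of $a$ as a map into $(\V,\xi_{\le})$ and the stated pointwise condition. From Remark~\ref{d:rem:6}, the collection
\[
\{(\upc v)^\complement\mid v\in\V\}=\{\{u\in\V\mid v\not\le u\}\mid v\in\V\}
\]
is a subbasis for $\xi_{\le}$. Hence $a$ is continuous if and only if, for every $v\in\V$, the set
\[
a^{-1}\bigl((\upc v)^\complement\bigr)=\{(x,y)\in X\times X\mid v\not\le a(x,y)\}
\]
is open in the product topology of $(X,\alpha)\times(X,\alpha)$.

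Next, I unpack this openness condition. A subset $S\subseteq X\times X$ is open in the product topology exactly when, for every $(x,y)\in S$, there are open neighbourhoods $V$ of $x$ and $W$ of $y$ with $V\times W\subseteq S$. Applied to $S=a^{-1}((\upc v)^\complement)$, this is precisely the statement that whenever $v\not\le a(x,y)$, one can find neighbourhoods $V$ of $x$ and $W$ of $y$ such that $v\not\le a(x',y')$ for all $x'\in V$ and $y'\in W$. Renaming $v$ to $u$ yields exactly the condition in the corollary, so the two formulations are equivalent.

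There is no real obstacle here; the only point to be careful about is that $\xi_{\le}$ really is generated by the subbasis $\{(\upc v)^\complement\mid v\in\V\}$, which is exactly what Remark~\ref{d:rem:6} records (as the dual of the Scott topology, using that $\V$ is completely distributive as guaranteed by Assumption~\ref{d:ass:4}). Once that subbasis is in hand, the proof is a one-line translation combined with the definition of the product topology.
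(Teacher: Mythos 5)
Your proposal is correct and follows exactly the paper's own argument: reduce via Proposition~\ref{d:prop:4} to continuity of $a$ into $(\V,\xi_{\le})$ and then unwind that continuity against the subbasis $\{(\upc v)^\complement \mid v \in \V\}$ recorded in Remark~\ref{d:rem:6}. The paper states this in one line; you have merely spelled out the same routine verification.
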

\begin{proof}
  It follows from the fact that the sets $(\upc u)^\complement$ ($u\in\V$) form
  a subbasis for the topology $\xi_\le$ on $\V$ (see Remark~\ref{d:rem:6}).
\end{proof}

We consider now the full subcategory $\CatCHs{\V}_\sep$ of $\CatCHs{\V}$ defined
by the separated $\V$-categorical compact Hausdorff spaces; i.e.\ those spaces
where the underlying $\V$-category is separated. The results above imply that
the separated reflector $\ftR \colon\Cats{\V}\to\Cats{\V}_\sep$ lifts to a
functor $\ftS \colon \CatCHs{\V}\to\CatCHs{\V}_\sep$ which is left adjoint to
the inclusion functor $\CatCHs{\V}_\sep\to\CatCHs{\V}$. In fact, for a
$\V$-categorical compact Hausdorff space $(X,a,\alpha)$, the equivalence
relation $\sim$ on $X$ is closed in $X\times X$ with respect to the product
topology, therefore the quotient topology on $X/{\sim}$ is compact Hausdorff
and, with $p \colon X\to X/{\sim}$ denoting the projection map, the diagram
\begin{equation*}
  \begin{tikzcd}[row sep=large, column sep=large,ampersand replacement=\&]
    X\times X
    \ar[r,"p\times p"]\ar[dr,"a"']
    \& X/{\sim}\times X/{\sim}
    \ar[d,"\widetilde{a}"] \\
    \& \V
  \end{tikzcd}
\end{equation*}
commutes. Consequently, the $\V$-category $(X/{\sim},\widetilde{a})$ together
with the quotient topology on $X/{\sim}$ is a $\V$-categorical compact Hausdorff
space. In contrast to Remark~\ref{d:rem:5}, now we have the following result.

\begin{proposition}
  \label{p:24}
  The functor $\ftS \colon \CatCHs{\V}\to\CatCHs{\V}_\sep$ preserves codirected
  limits.
\end{proposition}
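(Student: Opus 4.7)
My plan is to show that for any codirected diagram $D \colon I^\op \to \CatCHs{\V}$ the canonical comparison morphism $\phi \colon \ftS(X,a,\alpha) \to \lim\, \ftS\circ D$ produced by the universal property of the limit is an isomorphism in $\CatCHs{\V}_\sep$. Here $(X,a,\alpha)$ with projections $p_i \colon X \to X_i$ denotes the limit of $D$ in $\CatCHs{\V}$. I will exploit that limits in $\CatCHs{\V}$ are created by the forgetful into $\Cats{\V}$ sitting over the limit in $\COMPHAUS$; concretely, $(X,\alpha)$ is the codirected limit of $(X_i,\alpha_i)$ in $\COMPHAUS$, and $a(x,y) = \bigwedge_{i\in I} a_i(p_i x, p_i y)$. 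Since separatedness is preserved under initial lifts, the limit of $\ftS \circ D$ may be computed either in $\CatCHs{\V}_\sep$ or in $\CatCHs{\V}$, so I only need to check that $\phi$ is bijective and initial: combined, these give a homeomorphism of the underlying compact Hausdorff spaces (a continuous bijection between compact Hausdorff spaces is a homeomorphism) with an automatically $\V$-functorial inverse, hence an isomorphism.

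For injectivity, if $\phi([x]) = \phi([y])$ then, writing $q_i \colon X_i \to X_i/{\sim_i}$ for the quotient maps, we have $q_i(p_i x) = q_i(p_i y)$ for every $i$. This yields $k \le a_i(p_i x, p_i y)$ and $k \le a_i(p_i y, p_i x)$; taking infima over $i$ produces $k \le a(x,y)$ and $k \le a(y,x)$, so $[x] = [y]$. Initiality is transparent from the identity $a(x,y) = \bigwedge_i a_i(p_i x, p_i y) = \bigwedge_i \tilde a_i\bigl((\ftS p_i)[x],\,(\ftS p_i)[y]\bigr)$, which is exactly the $\V$-structure on $\lim \ftS \circ D$ pulled back along $\phi$.

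The main obstacle, as one expects for reflections interacting with codirected limits, is surjectivity, and my plan is to settle it by a standard compactness argument. Given a compatible family $([z_i])_{i\in I}$ representing a point of $\lim \ftS \circ D$, I pick arbitrary representatives $z_i \in X_i$ and set $K_i := q_i^{-1}([z_i])$, the equivalence class of $z_i$. Each $K_i$ is non-empty and closed in $X_i$ because the relation $\sim_i$ is closed in $X_i \times X_i$ (as already used in the construction of $\ftS$), making $X_i/{\sim_i}$ Hausdorff. The compatibility $(\ftS p_{ij})([z_i]) = [z_j]$ reads $p_{ij}(z_i) \sim_j z_j$, and since every $\V$-functor preserves $\sim$ one obtains $p_{ij}(K_i) \subseteq K_j$ whenever $i \ge j$. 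The restrictions $p_{ij}|_{K_i} \colon K_i \to K_j$ therefore assemble into a codirected diagram of non-empty compact Hausdorff spaces, whose limit is non-empty by the classical finite intersection property. Any element $x$ of this limit lies in $X$ and satisfies $p_i x \in K_i$, that is $p_i x \sim_i z_i$, for every $i$; hence $\phi([x]) = ([z_i])$, as required.
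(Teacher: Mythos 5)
Your proof is correct and follows essentially the same route as the paper's: show the canonical comparison map is initial (hence injective by separatedness) and surjective, and conclude it is an isomorphism of compact Hausdorff spaces carrying the pulled-back $\V$-structure. The only difference is that where you prove surjectivity by hand via the non-emptiness of a codirected limit of non-empty compact fibres, the paper obtains the same fact by citing Bourbaki's result that a codirected limit of surjections of compact spaces is surjective, together with Corollary~\ref{d:cor:4} for initiality.
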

\begin{proof}
  Let $D \colon I \to\CatCHs{\V}$ be a codirected diagram with limit cone
  $(\pi_i \colon X\to X_i)_{i\in I}$. Let $(\rho_i \colon L\to \ftS X_i)_{i\in I}$
  be a limit cone of $\ftS D$ in $\CatCHs{\V}_\sep$ and $q \colon \ftS X\to L$ be the
  canonical comparison map. By Corollary~\ref{d:cor:4}, $q$ is initial with
  respect to the forgetful functor $\CatCHs{\V}_\sep\to\COMPHAUS$. Since the
  diagram
  \begin{equation*}
    \begin{tikzcd}[row sep=large, column sep=large,ampersand replacement=\&]
      X
      \ar[r,"p"]\ar[d,"\pi_i"']
      \& \ftS X \ar[r,"q"] \&  L
      \ar[d,"\rho_i"] \\
      X_i
      \ar[rr,"p_i"']
      \&\& \ftS X_i
    \end{tikzcd}
  \end{equation*}
  commutes, $q\cdot p$ is surjective by \cite[I.9.6, Corollary 2]{Bou66} hence
  $q$ is surjective and therefore an isomorphism in $\CatCHs{\V}_\sep$.
\end{proof}

Besides the compact Hausdorff space $(X,\alpha)$, we also consider the stably
compact topology $a_\le$ induced by $\alpha$ and the underlying order of $a$, as
well as the dual space $(X,\alpha_\le)^\op$ of $(X,\alpha_\le)$. We remark that
the identity map $1_X \colon X\to X$ is continuous of types
\begin{equation*}
  (X,\alpha) \longrightarrow (X,\alpha_\le)
  \qquad\text{and}\qquad
  (X,\alpha) \longrightarrow (X,\alpha_\le)^\op.
\end{equation*}
Therefore a subset $A\subseteq X$ of $X$ is open (closed) in $(X,\alpha)$ if it
is open (closed) in $(X,\alpha_\le)$ or in $(X,\alpha_\le)^\op$. Moreover, every
closed subset of $(X,\alpha)$ is compact in $(X,\alpha_\le)$ and in
$(X,\alpha_\le)^\op$.

\begin{corollary}\label{d:cor:3}
  Let $(X,a,\alpha)$ be a $\V$-categorical compact Hausdorff space. Then also
  \begin{equation*}
    a \colon (X,\alpha_\le)^\op\times (X,\alpha_\le) \longrightarrow (\V,\xi_\le)
  \end{equation*}
  is continuous. Hence, for all $x,y\in X$ and $u\in\V$ with $u\not\leq a(x,y)$,
  there exist a neighbourhood $V$ of $x$ in $(X,\alpha_\le)^\op$ and a
  neighbourhood $W$ of $y$ in $(X,\alpha_\le)$ so that, for all $x'\in V$ and
  $y'\in W$, $u\not\le a(x',y')$.
\end{corollary}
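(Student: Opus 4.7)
The plan is to establish the neighbourhood reformulation; the continuity of $a\colon(X,\alpha_\le)^\op\times(X,\alpha_\le)\to(\V,\xi_\le)$ then follows at once because, by Remark~\ref{d:rem:6}, the sets $(\upc u)^\complement$ form a subbasis of $\xi_\le$. Fix $x,y\in X$ and $u\in\V$ with $u\not\le a(x,y)$, and set $U=a^{-1}((\upc u)^\complement)$.

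First, Proposition~\ref{d:prop:4} applied to $(X,a,\alpha)$ shows that $U$ is open in $(X,\alpha)\times(X,\alpha)$. Since $a\colon X^\op\otimes X\to\V$ is a $\V$-functor, hence monotone in the underlying orders, and $(\upc u)^\complement$ is down-closed in $\V$, the preimage $U$ is down-closed in $X^\op\times X$; in particular $\upc x\times\downc y\subseteq U$. Because $\upc k$ is a subbasic closed set of $\xi_\le$, the same proposition yields that the relation $\{(x',y')\mid x'\le y'\}=a^{-1}(\upc k)$ is closed in $X\times X$, so $\upc x$ and $\downc y$ are closed in $(X,\alpha)$. Their product is a compact subset of the open set $U$, and the tube lemma produces $\alpha$-open sets $V_0\supseteq\upc x$ and $W_0\supseteq\downc y$ with $V_0\times W_0\subseteq U$.

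To refine these to one-sided open sets, I apply Nachbin's normality theorem \cite{Nac50} to the underlying ordered compact Hausdorff space $(X,\le,\alpha)$. The down-closure $\downc(X\setminus V_0)$ is closed, as the image of the closed set $(\le)\cap(X\times(X\setminus V_0))$ under the projection onto the compact Hausdorff space $X$. It is disjoint from $\upc x$: a common point $z$ would satisfy $x\le z\le y$ for some $y\notin V_0$, contradicting $y\in\upc x\subseteq V_0$. Nachbin's theorem separates these sets by disjoint open up-set and open down-set, which gives an open up-set $V$ with $\upc x\subseteq V\subseteq V_0$; symmetrically, there is an open down-set $W$ with $\downc y\subseteq W\subseteq W_0$. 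Then $V\times W\subseteq U$, while $V$ is an $\alpha_\le^\op$-neighbourhood of $x$ and $W$ is an $\alpha_\le$-neighbourhood of $y$, as required.

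The main technical ingredient is the Nachbin separation step, which depends on recognising $(X,\le,\alpha)$ as a normally ordered space and on the closedness of $\le$ in $X\times X$; once these are in place, the rest of the argument is a routine combination of the tube lemma with the monotonicity of $a$ as a $\V$-functor.
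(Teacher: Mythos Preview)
Your argument is correct. Two cosmetic points: in the Nachbin step you reuse the letter $y$ for a point of $X\setminus V_0$ while $y$ is already fixed; and you might spell out explicitly that an $\alpha$-open up-set is precisely an open of $(X,\alpha_\le)^\op$ (and dually an $\alpha$-open down-set is an open of $(X,\alpha_\le)$), since this is what turns your $V,W$ into the required neighbourhoods.

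The paper's proof takes a different and much shorter route: it simply records that $a$ is continuous of type $(X,\alpha)\times(X,\alpha)\to(\V,\xi_\le)$ and monotone of type $(X,\le)^\op\times(X,\le)\to(\V,\le)$, and stops there. The implicit principle is that a map between ordered compact Hausdorff spaces which is continuous for the patch topologies and monotone has the property that the preimage of any open down-set is again an open down-set, hence open for the associated stably compact topology; together with the fact that the passage $(X,\le,\alpha)\mapsto(X,\alpha_\le)$ is compatible with products and duals, this yields the corollary at once. Your proof unpacks exactly this principle at the level of points: the tube lemma plus Nachbin normality is precisely the mechanism showing that an $(\alpha\times\alpha)$-open set which is up-closed in the first variable and down-closed in the second is a union of boxes $V\times W$ with $V$ open increasing and $W$ open decreasing. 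So the mathematical content is the same; the paper is terser because it treats the ordered-compact-Hausdorff/stably-compact correspondence as a black box, while your version is self-contained and makes the separation argument visible.
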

\begin{proof}
  Follows from the facts that $a \colon X\times X\to\V$ is continuous of type
  $(X,\alpha)\times(X,\alpha)\to(\V,\xi_\le)$ and monotone of type
  $(X,\le)^\op\times(X,\le)\to(\V,\le)$.
\end{proof}

\begin{remark}\label{d:rem:4}
  The result above allows us to construct some useful continuous maps. For
  instance, for $A\subseteq X$ compact in $(X,\alpha_\le)^\op$, the map
  $a \colon A\times X\to \V$ is continuous where we consider on $A$ the subspace
  topology. Therefore the composite arrow
  \begin{center}
    \begin{tikzcd}
      X\ar[r,"\mate{a}",near end]\ar[rr,"\upc^a_A",bend left=40] & {\V^A}
      \ar[r,"\bigvee", near start] & {\V}
    \end{tikzcd}
  \end{center}
  is continuous of type $(X,\alpha_\le)\to(\V,\xi_\le)$. Note that
  \begin{equation*}
    \upc^a_A(x)=\bigvee\{a(z,x)\mid z\in A\},
  \end{equation*}
  for every $x\in X$. Similarly, for $A\subseteq X$ compact in $(X,\alpha_\le)$,
  we obtain a continuous map
  $\downc_A^a \colon (X,\alpha_\le)^\op\to(\V,\xi_\le)$ sending $x\in X$ to
   \begin{equation*}
    \downc^a_A(x)=\bigvee\{a(x,z)\mid z\in A\}.
  \end{equation*}
\end{remark}

\begin{lemma}\label{d:lem:2}
  Let $(X,a,\alpha)$ be a $\V$-categorical compact Hausdorff space and
  $A\subseteq X$. Then the following assertions hold.
  \begin{enumerate}
  \item\label{d:item:6} If $A$ is compact in $(X,\alpha_\le)^\op$, then
    $\upc^a A$ is closed in $(X,\alpha_\le)$ and therefore also in $(X,\alpha)$.
  \item\label{d:item:7} If $A$ is compact in $(X,\alpha_\le)$, then $\downc^a A$
    is closed in $(X,\alpha_\le)^\op$ and therefore also in $(X,\alpha)$.
  \end{enumerate}
  In particular, if $A$ is closed in $(X,\alpha)$, then $\upc^a A$ and
  $\downc^a A$ are closed in $(X,\alpha_\le)$ and hence also in $(X,\alpha)$.
\end{lemma}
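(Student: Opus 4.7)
The plan is to realise each of the sets $\upc^a A$ and $\downc^a A$ as the preimage of the closed set $\upc k \subseteq \V$ under one of the continuous maps already produced in Remark~\ref{d:rem:4}, where closedness in $\V$ is measured in the topology $\xi_\le$.

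For the first assertion I take the map $\upc^a_A \colon (X,\alpha_\le) \to (\V,\xi_\le)$ from Remark~\ref{d:rem:4}, which is continuous precisely under the hypothesis that $A$ is compact in $(X,\alpha_\le)^\op$, and observe directly from the definition that
\[
  \upc^a A = \{y \in X \mid k \le \upc^a_A(y)\} = (\upc^a_A)^{-1}(\upc k).
\]
It then remains only to show that $\upc k$ is closed in $(\V,\xi_\le)$, which is immediate from Remark~\ref{d:rem:6}: the sets $(\upc v)^\complement$ form a subbasis for the open sets of $\xi_\le$, so every $\upc v$ is closed. The passage from closed in $(X,\alpha_\le)$ to closed in $(X,\alpha)$ is free, since the identity $(X,\alpha) \to (X,\alpha_\le)$ was noted to be continuous just before Corollary~\ref{d:cor:3}.

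The second assertion is entirely parallel, using instead the continuous map $\downc^a_A \colon (X,\alpha_\le)^\op \to (\V,\xi_\le)$ together with the identity $\downc^a A = (\downc^a_A)^{-1}(\upc k)$. For the ``in particular'' clause, it suffices to recall the observation made just before Corollary~\ref{d:cor:3} that every closed subset of $(X,\alpha)$ is compact in both $(X,\alpha_\le)$ and $(X,\alpha_\le)^\op$; a closed $A$ therefore satisfies the hypotheses of both preceding parts, giving the conclusion.

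I do not anticipate any real obstacle here: the substantive topological work has already been accumulated in Remarks~\ref{d:rem:4} and~\ref{d:rem:6}, and the remaining argument is essentially a short assembly of those facts with the definitions of $\upc^a A$ and $\downc^a A$.
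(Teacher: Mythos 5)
Your proposal is correct and is essentially identical to the paper's own proof, which likewise invokes the continuous maps of Remark~\ref{d:rem:4} and the identities $\upc^a A=(\upc^a_A)^{-1}(\upc k)$ and $\downc^a A=(\downc^a_A)^{-1}(\upc k)$. The extra details you supply (closedness of $\upc k$ in $(\V,\xi_\le)$ via Remark~\ref{d:rem:6}, and the compactness of closed subsets for the ``in particular'' clause) are exactly the facts the paper leaves implicit.
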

\begin{proof}
  Use the maps to Remark~\ref{d:rem:4} and observe that
  \begin{equation*}
    \upc^a A=(\upc^a_A)^{-1}(\upc k)
    \qquad\text{and}\qquad
    \downc^a A=(\downc^a_A)^{-1}(\upc k).\qedhere
  \end{equation*}
\end{proof}

From now on we assume the following condition.
\begin{assumption}\label{d:ass:1}
  The subset
  \begin{equation*}
    \tbdown k=\{u\in\V\mid u\lll k\}
  \end{equation*}
  of $\V$ is directed; which implies in particular that $k\neq\bot$. A quantale
  satisfying this condition is called \emph{value quantale} in \cite{Fla97},
  whereby in \cite{HR18} the designation \emph{$k$ is approximated} is used.
\end{assumption}

The assumption above implies some further pleasant properties of $\V$, as we
recall next.
\begin{lemma}
  The $\otimes$-neutral element $k$ satisfies the conditions
  \begin{equation*}
    (k\le u\vee v)\implies ((k\le u) \text{ or }(k\le v)),
  \end{equation*}
  for all $u,v\in\V$, and
  \begin{equation*}
    k\le\bigvee_{u\lll k}u\otimes u.
  \end{equation*}
\end{lemma}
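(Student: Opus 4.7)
The plan is to exploit that in a completely distributive quantale $\V$ the neutral element satisfies $k=\bigvee\tbdown k$, together with the standard property of the totally below relation: if $w\lll x$ and $x\le\bigvee S$, then $w\le s$ for some $s\in S$. Assumption~\ref{d:ass:1} gives moreover that $\tbdown k$ is directed, and these three ingredients (plus $k\otimes k=k$) are all that is needed.

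For the first claim, suppose $k\le u\vee v$. For each $w\in\tbdown k$ the totally below property yields $w\le u$ or $w\le v$, so $\tbdown k=U\cup V$, where $U=\{w\in\tbdown k\mid w\le u\}$ and $V=\{w\in\tbdown k\mid w\le v\}$. Assume $k\not\le u$. Since $k=\bigvee\tbdown k$ this forces some $w_0\in\tbdown k$ with $w_0\not\le u$, hence $w_0\in V$. For an arbitrary $w\in\tbdown k$, directedness of $\tbdown k$ provides $w'\in\tbdown k$ with $w_0,w\le w'$; since $w_0\not\le u$ and $w_0\le w'$, the element $w'$ cannot lie in $U$, so $w'\in V$, and therefore $w\le w'\le v$. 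Taking the supremum over $w\in\tbdown k$ gives $k\le v$.

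For the second claim, start from $k=k\otimes k$ and apply complete distributivity on both factors. Using that $\otimes$ preserves arbitrary suprema in each variable,
\begin{equation*}
  k \;=\; k\otimes k \;=\; \Bigl(\bigvee_{u\lll k}u\Bigr)\otimes\Bigl(\bigvee_{v\lll k}v\Bigr)
        \;=\; \bigvee_{u,v\lll k} u\otimes v.
\end{equation*}
Now invoke directedness of $\tbdown k$: for every pair $u,v\lll k$ there exists $w\lll k$ with $u\le w$ and $v\le w$, whence $u\otimes v\le w\otimes w$. This bounds the double supremum by $\bigvee_{w\lll k} w\otimes w$, yielding $k\le\bigvee_{u\lll k} u\otimes u$.

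I do not expect serious obstacles: both statements are folklore consequences of complete distributivity together with directedness of $\tbdown k$. The only subtle point is making sure that the inference ``$w\lll k\le u\vee v$ implies $w\le u$ or $w\le v$'' is invoked with the correct form of the totally below relation, and that the quantale multiplication $\otimes$ distributes over the suprema defining $k$ from $\tbdown k$; both are part of the standing hypotheses on $\V$.
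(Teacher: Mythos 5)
Your proof is correct. Note, though, that the paper does not actually prove this lemma: it simply cites \cite[Theorem~1.12]{Fla92} and \cite[Remark~4.21]{HR13}, so your argument is a genuine (and welcome) self-contained replacement. The ingredients you use are exactly the right ones: complete distributivity gives $k=\bigvee\tbdown k$ together with the defining property of the totally below relation (applied to the two-element set $\{u,v\}$ for the first claim), the quantale axiom that $\otimes$ distributes over arbitrary joins in each variable gives $k=k\otimes k=\bigvee_{u,v\lll k}u\otimes v$, and Assumption~\ref{d:ass:1} (directedness of $\tbdown k$) is what collapses the union $U\cup V$ onto one of its parts in the first claim and the double supremum onto the diagonal in the second. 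The one place a reader might pause --- that $w_0\le w'$ and $w_0\not\le u$ force $w'\not\le u$, hence $w'\in V$ and so $w\le w'\le v$ for \emph{every} $w$ --- is handled correctly. What your version buys over the paper's citation is transparency about which hypotheses are used where: the first implication needs only $k=\bigvee\tbdown k$ and directedness, while the second additionally needs $k\otimes k=k$ and join-preservation of $\otimes$; neither needs anything beyond the standing assumptions.
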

\begin{proof}
  See \cite[Theorem~1.12]{Fla92} and \cite[Remark~4.21]{HR13}.
\end{proof}

\begin{lemma}\label{d:lem:5}
  Let $(X,a,\alpha)$ be a $\V$-categorical compact Hausdorff space and
  $A,B\subseteq X$ so that $A\cap B=\varnothing$, $A$ is increasing and compact
  in $(X,\alpha_\le)^\op$ and $B$ is compact in $(X,\alpha_\le)$. Then there
  exists some $u\lll k$ so that, for all $x\in A$ and $y\in B$,
  $u\not\le a(x,y)$.
\end{lemma}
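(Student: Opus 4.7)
The plan is to translate the disjointness $A\cap B=\varnothing$ into a uniform separation statement in $\V$, by combining the continuity of the map $\upc^a_A$ (from Remark~\ref{d:rem:4}) with a compactness argument. First I would observe that, since $A$ is increasing, $A=\upc^a A$, so the hypothesis $A\cap B=\varnothing$ gives $y\notin\upc^a A$ for every $y\in B$; unwinding the definition of $\upc^a$, this means
\[
 k \;\not\le\; \upc^a_A(y) \;=\; \bigvee_{x\in A}a(x,y) \qquad (y\in B).
\]

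Next I would invoke Remark~\ref{d:rem:4}: because $A$ is compact in $(X,\alpha_\le)^\op$, the map $\upc^a_A\colon(X,\alpha_\le)\to(\V,\xi_\le)$ is continuous. Hence the image $\upc^a_A(B)$ is compact in $(\V,\xi_\le)$, since $B$ is compact in $(X,\alpha_\le)$. By the previous paragraph, $\upc^a_A(B)\subseteq(\upc k)^\complement$.

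The key step is to exhibit a cover of $(\upc k)^\complement$ by the subbasic open sets $(\upc u)^\complement$ (see Remark~\ref{d:rem:6}) indexed by $u\lll k$. Complete distributivity of $\V$ gives $k=\bigvee\tbdown k$, and from this it is straightforward to verify
\[
 (\upc k)^\complement \;=\; \bigcup_{u\lll k}(\upc u)^\complement,
\]
because $k\not\le v$ forces some $u\lll k$ with $u\not\le v$ (otherwise $k=\bigvee\tbdown k\le v$). By compactness of $\upc^a_A(B)$ in $\xi_\le$, finitely many $(\upc u_i)^\complement$ with $u_i\lll k$ already cover it.

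Finally I would use Assumption~\ref{d:ass:1}: since $\tbdown k$ is directed, I can pick $u\lll k$ with $u_i\le u$ for $i=1,\dots,n$; then $\upc u\subseteq\upc u_i$, so $(\upc u)^\complement\supseteq(\upc u_i)^\complement$ for each $i$, and therefore $\upc^a_A(B)\subseteq(\upc u)^\complement$. This translates to $u\not\le\bigvee_{x\in A}a(x,y)$ for every $y\in B$, which immediately implies $u\not\le a(x,y)$ for all $x\in A$ and $y\in B$. The only real subtlety in the argument is the final directedness step, precisely what Assumption~\ref{d:ass:1} is designed to supply; everything else is continuity and compactness bookkeeping.
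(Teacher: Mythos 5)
Your proof is correct, and it takes a genuinely different route from the paper's. The paper argues directly with the binary structure map: for each $y\in B$ it picks $v_y\lll k$ witnessing $y\notin\upc^a A$, uses the joint continuity of $a\colon(X,\alpha_\le)^\op\times(X,\alpha_\le)\to(\V,\xi_\le)$ (Corollary~\ref{d:cor:3}) together with compactness of $A$ to produce a pair of opens $U_y\supseteq A$, $W_y\ni y$ on which $v_y$ fails, then covers $B$ by finitely many $W_{y_i}$ and takes $u=v_{y_1}\vee\dots\vee v_{y_n}$, invoking directedness of $\tbdown k$ only to certify $u\lll k$. You instead push the whole problem into $\V$ through the single continuous map $\upc^a_A\colon(X,\alpha_\le)\to(\V,\xi_\le)$ of Remark~\ref{d:rem:4}, so that the compactness of $A$ is consumed once (in guaranteeing that continuity) and only one covering argument remains, applied to the compact image $\upc^a_A(B)\subseteq(\upc k)^\complement$ and the subbasic opens $(\upc u)^\complement$, $u\lll k$; the identity $(\upc k)^\complement=\bigcup_{u\lll k}(\upc u)^\complement$ is exactly where complete distributivity ($k=\bigvee\tbdown k$) enters, and directedness then collapses the finite subcover to a single $u$. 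Your version trades the paper's nested double compactness for a cleaner one-shot argument in $(\V,\xi_\le)$, at the modest cost of leaning on Remark~\ref{d:rem:4} (which itself rests on Corollary~\ref{d:cor:3}); the paper's version is more self-contained but bookkeeps two families of neighbourhoods. Both uses of Assumption~\ref{d:ass:1} are essentially the same, and every step of yours checks out, including the down-closure of $\lll$ in its first argument needed to conclude $u\lll k$ from $u_i\le u$ with $u\in\tbdown k$.
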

\begin{proof}
  Let $y\in B$. Since $A$ is increasing and $y\notin A$, there is some
  $v_y\lll k$ so that
  \[
    v_y\not\le\bigvee_{x\in A}a(x,y).
  \]
  Hence, by Corollary~\ref{d:cor:3}, for every $x \in A$ there exists $U_{xy}$
  open in $(X, \alpha_\leq)^\op$ and $W_{xy}$ open in $(X, \alpha_\leq)$ such
  that $y \in W_{xy}$ and
  \[
    \forall x'\in U_{xy},y'\in W_{xy}\,.\,v_y\not\le a(x',y').
  \]
  Therefore, by compactness of $A$, there exists an open subset $U_y$ in
  $(X,\alpha_\le)^\op$ and an open subset $W_y$ in $(X,\alpha_\le)$ such that
  $A\subseteq U_y$, $y\in W_y$ and
  \[
    \forall x'\in U_y,y'\in W_y\,.\,v_y\not\le a(x',y').
  \]
  Then $B\subseteq \bigcup\{W_y\mid y\in X,y\notin V\}$ and, since $B$ is
  compact, there are finitely many elements $y_1,\dots,y_n\in B$ with
  \[
    B\subseteq W_{y_1}\cup\dots\cup W_{y_n}.
  \]
  Put $u=v_{y_1}\vee\dots\vee v_{y_n}$. Then $u\lll k$ since $\tbdown k$ is
  directed; moreover, $u\not\le a(x,y)$, for all $x\in A$ and $y\in B$.
\end{proof}

\begin{lemma}\label{d:lem:8}
  Let $A\subseteq\V$ be compact subset in $(\V,\xi_\le)$. If $k\le \bigvee A$,
  then there is some $u\in A$ with $k\le u$.
\end{lemma}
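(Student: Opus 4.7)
The plan is to argue by contradiction, exploiting the description of the topology $\xi_\le$ as the dual of the Scott topology together with the hypothesis that $\tbdown k$ is directed. Suppose that $k \not\le u$ for every $u \in A$. Since $\V$ is completely distributive, every element is the supremum of the elements totally below it; in particular, $k = \bigvee \tbdown k$. Hence, if we had $v \le u$ for every $v \lll k$, we would conclude $k \le u$, contradicting our assumption. So for each $u \in A$ we can choose some $v_u \lll k$ with $v_u \not\le u$, i.e.\ $u \in (\upc v_u)^\complement$.

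By Remark~\ref{d:rem:6}, each $(\upc v)^\complement$ is open in $(\V,\xi_\le)$, so the family $\{(\upc v)^\complement \mid v \in \tbdown k\}$ is an open cover of $A$. Compactness of $A$ yields a finite subcover $(\upc v_1)^\complement, \dots, (\upc v_n)^\complement$ with $v_i \lll k$ for $i=1,\dots,n$. Using Assumption~\ref{d:ass:1}, the directedness of $\tbdown k$ provides some $v \in \tbdown k$ with $v_i \le v$ for all $i$; since $v_i \le v$ implies $(\upc v_i)^\complement \subseteq (\upc v)^\complement$, we obtain $A \subseteq (\upc v)^\complement$, that is, $v \not\le u$ for every $u \in A$.

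On the other hand, $v \lll k \le \bigvee A$, and the defining property of the totally-below relation ensures that whenever $v \lll \bigvee S$ there exists $s \in S$ with $v \le s$. Applying this to $S = A$ produces some $u \in A$ with $v \le u$, contradicting the conclusion of the previous paragraph. Therefore there must exist $u \in A$ with $k \le u$, as required.

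The main (minor) obstacle is justifying the passage from ``$k \not\le u$'' to ``some $v \lll k$ witnesses this via $v \not\le u$''; this is precisely where complete distributivity of $\V$ is used, via the identity $k = \bigvee \tbdown k$. Everything else is a standard compactness argument combined with directedness of $\tbdown k$ to consolidate finitely many opens into one.
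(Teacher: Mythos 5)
Your proof is correct, but it takes a genuinely different route from the paper's. The paper deduces the lemma as a quick corollary of Lemma~\ref{d:lem:5}: assuming $\upc k\cap\downc A=\varnothing$, it applies that separation result to the increasing set $\upc k$ (compact in $(\V,\xi_\le)^\op$) and to $\downc A$ (compact in $(\V,\xi_\le)$) inside the $\V$-categorical compact Hausdorff space $(\V,\hom,\xi)$, obtaining a single $u\lll k$ with $u\not\le\hom(k,v)=v$ for all $v\in A$, whence $k\not\le\bigvee A$ by the defining property of $\lll$. You instead rerun the underlying compactness argument from scratch: complete distributivity gives $k=\bigvee\tbdown k$, so each $u\in A$ lies in some subbasic open $(\upc v_u)^\complement$ of $\xi_\le$ (Remark~\ref{d:rem:6}); a finite subcover together with directedness of $\tbdown k$ (Assumption~\ref{d:ass:1}) consolidates these into a single $v\lll k$ with $A\subseteq(\upc v)^\complement$, and the totally-below property applied to $v\lll k\le\bigvee A$ yields the contradiction. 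The two arguments share the same punchline and the same essential ingredients (compactness plus directedness of $\tbdown k$), but yours is more self-contained and elementary, avoiding the detour through Lemma~\ref{d:lem:5} and the continuity statement of Corollary~\ref{d:cor:3}, while the paper's is shorter because it reuses machinery already established. Your closing remark correctly identifies where complete distributivity enters ($k=\bigvee\tbdown k$); in the paper that use is hidden inside the proof of Lemma~\ref{d:lem:5}.
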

\begin{proof}
  Assume that $\upc k\cap \downc A=\varnothing$. Since $\upc k$ is increasing
  and compact in $(\V,\xi_\le)^\op$ and $\downc A$ is compact in $(\V,\xi_\le)$,
  by Lemma~\ref{d:lem:5}, there is some $u\lll k$ so that, for all $v\in A$,
  $u\not\le\hom(k,v)=v$. Therefore $k\not\le\bigvee A$.
\end{proof}

Combining Remark~\ref{d:rem:4} with Lemma~\ref{d:lem:8}, we obtain:

\begin{lemma}\label{d:lem:9}
  Let $(X,a,\alpha)$ be a $\V$-categorical compact Hausdorff spaces with
  underlying order $\le$. Then, for every compact subset $A\subseteq X$ of
  $(X,\alpha_\le)^\op$, $\upc^a A=\upc^\le A$; likewise, for every compact
  subset $A\subseteq X$ of $(X,\alpha_\le)$, $\downc^a A=\downc^\le A$. In
  particular, for every closed subset $A\subseteq X$ of $(X,\alpha)$,
  $\downc^a A=\downc^\le A$ and $\upc^a A=\upc^\le A$.
\end{lemma}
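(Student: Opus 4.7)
The plan is to observe that the inclusion $\upc^\le A\subseteq \upc^a A$ always holds (as noted early in Section~\ref{sec:hausdorff-functor}), so I only need to establish the reverse inclusion $\upc^a A\subseteq \upc^\le A$ when $A$ is compact in $(X,\alpha_\le)^\op$. The dual statement for $\downc$ is then symmetric (e.g.\ by applying the result to the $\V$-categorical compact Hausdorff space with structure $a^\circ$), and the ``in particular'' clause follows from the fact, recorded before Corollary~\ref{d:cor:3}, that every closed subset of $(X,\alpha)$ is compact in both $(X,\alpha_\le)$ and $(X,\alpha_\le)^\op$.

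So let $y\in \upc^a A$, that is, $k\le\bigvee_{x\in A}a(x,y)$. First I would invoke Corollary~\ref{d:cor:3} to get that
\begin{equation*}
  a \colon (X,\alpha_\le)^\op\times(X,\alpha_\le)\longrightarrow(\V,\xi_\le)
\end{equation*}
is continuous. Composing with the (always continuous) inclusion $x\mapsto (x,y)$, the map $a(-,y)\colon (X,\alpha_\le)^\op\to(\V,\xi_\le)$ is continuous. Hence the image $a(A,y)=\{a(x,y)\mid x\in A\}$ is compact in $(\V,\xi_\le)$, since $A$ is compact in $(X,\alpha_\le)^\op$.

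The main step is then the application of Lemma~\ref{d:lem:8}: the set $a(A,y)$ is compact in $(\V,\xi_\le)$ and $k\le\bigvee a(A,y)$, so there exists $x_0\in A$ with $k\le a(x_0,y)$. By definition of the underlying order this means $x_0\le y$, and therefore $y\in \upc^\le A$. The dual statement is obtained in the same way, using $\downc^a_A$ continuous of type $(X,\alpha_\le)^\op\to(\V,\xi_\le)$ from Remark~\ref{d:rem:4} (or by swapping the roles of $x$ and $y$ in the argument above).

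I do not anticipate any real obstacle: the content of the lemma is essentially that value-quantale non-triviality (Assumption~\ref{d:ass:1}, already exploited in Lemma~\ref{d:lem:8}) together with the separation-type continuity of $a$ from Corollary~\ref{d:cor:3} forces a ``sup-attaining'' behaviour on compact sets. The only point requiring a little care is ensuring that the appropriate half of the product topology on $X\times X$ is used for each of the two inclusions, which is why I would split the argument into the $\upc$ case and its formal dual rather than trying to treat both symmetrically in one line.
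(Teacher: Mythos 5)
Your proof is correct and is essentially the paper's own argument: the paper derives the lemma in one line by "combining Remark~\ref{d:rem:4} with Lemma~\ref{d:lem:8}", which is precisely the continuity-plus-compactness-of-the-image step you spell out (your use of Corollary~\ref{d:cor:3} in place of Remark~\ref{d:rem:4} is immaterial, as the latter is derived from the former). Your handling of the dual case and of the ``in particular'' clause via the observation that closed subsets of $(X,\alpha)$ are compact in both $(X,\alpha_\le)$ and $(X,\alpha_\le)^\op$ is also exactly what the paper intends.
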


Thanks to Lemma~\ref{d:lem:9} we can transport several well-known result for
ordered compact Hausdorff spaces to metric compact Hausdorff spaces.

\begin{lemma}
  \label{d:lem:14}
  Let $(X,a,\alpha)$ be a $\V$-categorical compact Hausdorff space,
  $A\subseteq X$ closed and $W\subseteq X$ open and co-increasing with
  $A\subseteq W$. Then $\downc A\subseteq W$.
\end{lemma}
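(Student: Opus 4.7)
The plan is to reduce the statement to its purely order-theoretic analogue by invoking Lemma~\ref{d:lem:9}, and then argue directly from the definition of ``co-increasing''.

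More precisely, since $A$ is closed in $(X,\alpha)$, Lemma~\ref{d:lem:9} gives the equality
\[
  \downc^a A = \downc^\le A,
\]
where $\le$ denotes the underlying order of $(X,a)$. Hence it suffices to prove that $\downc^\le A \subseteq W$. Take any $y \in \downc^\le A$; by definition, there exists $z \in A$ with $y \le z$. Suppose, towards a contradiction, that $y \notin W$, i.e.\ $y \in W^\complement$. By hypothesis, $W^\complement$ is increasing, so from $y \le z$ we get $z \in W^\complement$. But $z \in A \subseteq W$, a contradiction.

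The only non-trivial ingredient is Lemma~\ref{d:lem:9}, which itself depends on Assumption~\ref{d:ass:1} (that $\tbdown k$ is directed) to guarantee that the $\V$-enriched down-closure of a closed set coincides with the down-closure with respect to the underlying order. Once that is in hand, the remainder is a one-line argument using only the fact that increasing sets are closed under passing to $\le$-larger elements. No compactness or topological separation argument is needed beyond what is already packaged in Lemma~\ref{d:lem:9}.
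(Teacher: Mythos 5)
Your proof is correct, but it takes a different route from the paper's. The paper's proof is a direct, one-line application of the separation result Lemma~\ref{d:lem:5}: since $W^\complement$ is increasing, closed (hence compact in $(X,\alpha_\le)^\op$) and disjoint from the closed (hence compact in $(X,\alpha_\le)$) set $A$, there is $u\lll k$ with $u\not\le a(x,y)$ for all $x\in W^\complement$ and $y\in A$, and total-below-ness of $u$ then forces $k\not\le\bigvee_{y\in A}a(x,y)$ for every $x\in W^\complement$, i.e.\ $\downc^a A\cap W^\complement=\varnothing$. You instead first invoke Lemma~\ref{d:lem:9} to replace $\downc^a A$ by the order-theoretic $\downc^\le A$ (legitimate, since $A$ is closed in $(X,\alpha)$), and then finish with the elementary observation that a $\V$-categorically increasing set is in particular up-closed for the underlying order. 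Both arguments are valid and both ultimately rest on Lemma~\ref{d:lem:5} (your Lemma~\ref{d:lem:9} is derived from Lemma~\ref{d:lem:8}, which is derived from Lemma~\ref{d:lem:5}), so neither is more economical in terms of hypotheses; your version has the mild advantage of isolating all the enriched-category content in a single cited equality and making the residual argument purely order-theoretic, while the paper's version avoids the detour through Lemma~\ref{d:lem:9}. Your remark about the dependence on Assumption~\ref{d:ass:1} is accurate and applies equally to the paper's proof.
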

\begin{proof}
  Apply Lemma~\ref{d:lem:5} to
  $W^\complement\subseteq A^\complement$.
\end{proof}

The following result is \cite[Proposition~5]{Nac65}.

\begin{proposition}\label{d:prop:1}
  Let $(X,a,\alpha)$ be a $\V$-categorical compact Hausdorff space,
  $A\subseteq X$ closed and increasing and $V\subseteq X$ open with
  $A\subseteq V$. Then there exists $W\subseteq X$ open and co-decreasing with
  $A\subseteq W\subseteq V$.
\end{proposition}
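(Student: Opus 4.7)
The plan is to exhibit an explicit candidate, namely
\[
  W \;:=\; X \setminus \downc^{a}(V^\complement),
\]
and then verify in turn the four properties required: openness, co-decreasingness, $A\subseteq W$, and $W\subseteq V$. The idea is that the closed set $V^\complement$ is ``separated'' from $A$ by its own down-closure, so taking the complement of this down-closure produces the desired neighbourhood.

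The easy half of the verification proceeds as follows. Since $V^\complement$ is closed in $(X,\alpha)$, it is compact in $(X,\alpha_\le)$, so Lemma~\ref{d:lem:2}\eqref{d:item:7} gives that $\downc^{a}(V^\complement)$ is closed in $(X,\alpha)$; hence $W$ is open. Its complement is a $\downc$-set, which is decreasing by the elementary identity $\downc\downc B=\downc B$, so $W$ is co-decreasing. The inclusion $W\subseteq V$ follows from the trivial $V^\complement\subseteq\downc^{a}(V^\complement)$ (taking $x=y$ and using $k\le a(y,y)$).

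The substantive step is to show $A\cap\downc^{a}(V^\complement)=\varnothing$. Here I would invoke Lemma~\ref{d:lem:9}: since $V^\complement$ is closed in $(X,\alpha)$, we have $\downc^{a}(V^\complement)=\downc^{\le}(V^\complement)$. Assuming $y\in A\cap\downc^{\le}(V^\complement)$, pick $z\in V^\complement$ with $y\le z$. Because $A$ is $a$-increasing and $\upc^{\le}A\subseteq\upc^{a}A=A$, the set $A$ is also $\le$-increasing; therefore $z\in A\subseteq V$, contradicting $z\in V^\complement$. This yields $A\subseteq W$ and completes the proof.

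The main obstacle is precisely the gap between the $\V$-enriched notion $\downc^{a}$ and the order-theoretic notion $\downc^{\le}$. A priori, membership $y\in\downc^{a}(V^\complement)$ only gives the weighted condition $k\le\bigvee_{x\in V^\complement}a(y,x)$, which need not produce any single $z\in V^\complement$ above $y$; a direct enriched argument would stall here. Lemma~\ref{d:lem:9} is the crucial bridge that collapses $\downc^{a}$ and $\downc^{\le}$ on closed sets, and it is ultimately through this lemma (and the separation input of Lemma~\ref{d:lem:5}, which relies on $k$ being approximated) that the classical Nachbin-style separation argument survives in the $\V$-categorical setting.
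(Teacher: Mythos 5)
Your proof is correct and follows exactly the route the paper intends: the paper gives no argument of its own but cites Nachbin's Proposition~5 together with the transport principle announced just before Lemma~\ref{d:lem:14} (``Thanks to Lemma~\ref{d:lem:9} we can transport\dots''), and your explicit candidate $W=X\setminus\downc^{a}(V^\complement)$ is precisely the classical Nachbin construction made to work in the enriched setting via Lemma~\ref{d:lem:2} (closedness of $\downc^{a}$ of a closed set) and Lemma~\ref{d:lem:9} (collapsing $\downc^{a}$ to $\downc^{\le}$ on closed sets). Your closing remark correctly identifies Lemma~\ref{d:lem:9} as the essential bridge without which the disjointness step $A\cap\downc^{a}(V^\complement)=\varnothing$ would fail to reduce to the order-theoretic argument.
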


\begin{theorem}\label{d:thm:3}
  Let $(X,a,\alpha)$ be a $\V$-categorical compact Hausdorff space,
  $A\subseteq X$ closed and decreasing and $B\subseteq X$ closed and increasing
  with $A\cap B=\varnothing$. Then there exist $V\subseteq X$ open and
  co-increasing and $W\subseteq X$ open and co-decreasing with
  \[
    A\subseteq V,\quad B\subseteq W,\quad V\cap W=\varnothing.
  \]
\end{theorem}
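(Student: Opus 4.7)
The plan is to combine normality of the underlying compact Hausdorff space $(X,\alpha)$ with Proposition~\ref{d:prop:1} and its ``dual'' version for closed decreasing sets; the disjointness of $V$ and $W$ will then come for free from a preliminary separation by disjoint opens.

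First, since $(X,\alpha)$ is compact Hausdorff and hence normal, and $A,B$ are disjoint closed subsets of $X$, I would pick disjoint open sets $U_A,U_B\subseteq X$ with $A\subseteq U_A$ and $B\subseteq U_B$. Applying Proposition~\ref{d:prop:1} to the closed increasing set $B$ inside the open set $U_B$ then yields an open co-decreasing $W$ with $B\subseteq W\subseteq U_B$. Analogously, applying the dual of Proposition~\ref{d:prop:1} to the closed decreasing set $A$ inside the open set $U_A$ yields an open co-increasing $V$ with $A\subseteq V\subseteq U_A$. Since $V\subseteq U_A$ and $W\subseteq U_B$ while $U_A\cap U_B=\varnothing$, the disjointness $V\cap W=\varnothing$ is automatic, and all three requirements of the theorem are in place.

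The only nontrivial ingredient is the dual of Proposition~\ref{d:prop:1}: in a $\V$-categorical compact Hausdorff space $(X,a,\alpha)$, every closed decreasing $A$ sitting in an open set $V$ admits an open co-increasing $W$ with $A\subseteq W\subseteq V$. I would establish it by passing to the opposite structure $(X,a^\circ,\alpha)$, which is again a $\V$-categorical compact Hausdorff space: $a^\circ$ is a $\V$-category structure by commutativity of $\otimes$, and the map $a^\circ\colon(X,\alpha)\times(X,\alpha)\to(\V,\xi_\le)$ is continuous because $a$ is continuous and swapping coordinates is continuous, so Proposition~\ref{d:prop:4} applies. Under the passage from $a$ to $a^\circ$ the roles of increasing and decreasing (and of co-increasing and co-decreasing) are exchanged, hence Proposition~\ref{d:prop:1} applied in $(X,a^\circ,\alpha)$ delivers exactly the dual statement needed. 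This bookkeeping of the duality is the main obstacle; once it is in place, the proof reduces to the two one-sided refinements combined with the pair of disjoint opens provided by normality of $(X,\alpha)$.
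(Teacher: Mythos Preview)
Your argument is correct. The paper itself does not give a proof but simply cites \cite[Theorem~4]{Nac65}, implicitly relying on Lemma~\ref{d:lem:9} to pass from the $\V$-categorical notions of (co-)increasing and (co-)decreasing to the corresponding order-theoretic ones for closed sets, whereupon Nachbin's classical separation theorem for ordered compact Hausdorff spaces applies verbatim. Your route is a little different: instead of invoking the two-sided result directly, you unpack it as normality of $(X,\alpha)$ followed by two one-sided refinements via Proposition~\ref{d:prop:1} and its dual, with the dual obtained by passing to $(X,a^\circ,\alpha)$. This is essentially the standard proof of Nachbin's Theorem~4 itself, so you have effectively reproved the cited result in the $\V$-categorical setting rather than reduced to it. Both approaches are equally short once Proposition~\ref{d:prop:1} is available; yours has the mild advantage of making the dualisation step explicit (and confirming via Proposition~\ref{d:prop:4} that $(X,a^\circ,\alpha)$ is again a $\V$-categorical compact Hausdorff space), while the paper's citation keeps the transport to the underlying order in the background.
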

\begin{proof}
  See \cite[Theorem~4]{Nac65}.
\end{proof}

For a $\V$-categorical compact Hausdorff space $X=(X,a,\alpha)$, we put
\[
  \ftH X=\{A\subseteq X\mid A\text{ is closed and increasing\}}
\]
and consider on $\ftH X$ the restriction of the Hausdorff structure
$\ftH a$ to $\ftH X$ and the \df{hit-and-miss topology}, that is, the topology
generated by the sets
\[
  V^\Diamond=\{A\in\ftH X \mid A\cap V \neq \varnothing\}\qquad (\text{$V$ open,
    co-increasing})
\]
and
\[
  W^\Box=\{A\in\ftH X \mid A\subseteq W\}\qquad (\text{$W$ open,
    co-decreasing}).
\]

Note that, by Lemma~\ref{d:lem:9}, the topological part of $\ftH X$ coincides
with the Vietoris topology for the underlying ordered compact Hausdorff
space. In particular:

\begin{proposition}\label{d:prop:5}
  For every $\V$-categorical compact Hausdorff space $X$, the hit-and-miss
  topology on $\ftH X$ is compact and Hausdorff.
\end{proposition}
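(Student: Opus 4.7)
The plan is to reduce the claim to the classical result that the Vietoris hyperspace of all closed subsets of a compact Hausdorff space is compact and Hausdorff, by using Lemma~\ref{d:lem:9} to strip away the $\V$-categorical content. First I would observe that by Lemma~\ref{d:lem:9}, for every closed $A\subseteq X$ one has $\upc^a A=\upc^\le A$, so the underlying set $\ftH X$ coincides with the set of closed, order-increasing subsets of the underlying ordered compact Hausdorff space $(X,\le,\alpha)$; moreover the subbasis consisting of the $V^\Diamond$ and $W^\Box$ generating the hit-and-miss topology depends only on the order and the topology on $X$. This reduces the problem to the corresponding purely order-topological statement for ordered compact Hausdorff spaces.

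For Hausdorffness, given distinct $A,B\in\ftH X$ I would pick, without loss of generality, a witness $x\in A\setminus B$. Since $\{x\}$ is compact in $(X,\alpha_\le)$, Lemma~\ref{d:lem:2} guarantees that $\downc x$ is closed, and it is clearly decreasing. Furthermore, because $B$ is increasing and $x\notin B$, the sets $\downc x$ and $B$ are disjoint. Applying Theorem~\ref{d:thm:3} yields disjoint open sets $V$ (co-increasing) and $W$ (co-decreasing) with $\downc x\subseteq V$ and $B\subseteq W$. Then $V^\Diamond$ and $W^\Box$ are disjoint open neighbourhoods of $A$ and $B$ in the hit-and-miss topology: $x\in A\cap V$ gives $A\in V^\Diamond$, clearly $B\in W^\Box$, and any common element $C$ would produce some $z\in C\cap V\subseteq W\cap V=\varnothing$.

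For compactness, I would exhibit $\ftH X$ as a continuous image of the classical Vietoris hyperspace $\ftP_c(X)$ of all closed subsets of the compact Hausdorff space $(X,\alpha)$, topologised by $\{A\mid A\cap V\neq\varnothing\}$ and $\{A\mid A\subseteq W\}$ for \emph{arbitrary} open $V,W$; this hyperspace is well known to be compact Hausdorff. Define $h\colon\ftP_c(X)\to\ftH X$ by $h(A)=\upc A$; this is well-defined by Lemma~\ref{d:lem:2} (combined with Lemma~\ref{d:lem:9}), and it is surjective since it fixes every closed increasing set. Continuity reduces to checking $h^{-1}(V^\Diamond)=V^\Diamond$ and $h^{-1}(W^\Box)=W^\Box$ for $V$ open co-increasing and $W$ open co-decreasing, and both identities follow immediately from the fact that $V^\complement$ is increasing (so any $a\le z$ with $z\in V$ forces $a\in V$) and $W^\complement$ is decreasing (so $\upc A\subseteq W\iff A\subseteq W$). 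Thus $\ftH X$ is the continuous image of a compact space, hence compact. I do not anticipate a genuine obstacle: Lemma~\ref{d:lem:9} does the heavy lifting by collapsing the $\V$-structure to the order on closed sets, after which only the Nachbin separation theorem and a routine continuity check remain; the one subtle point, namely that $\upc A$ is closed whenever $A$ is, is exactly Lemma~\ref{d:lem:2} and is the single place where compatibility between the topology and the $\V$-category structure is essentially used.
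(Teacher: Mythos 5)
Your argument is correct, and it starts from exactly the reduction the paper uses: Lemma~\ref{d:lem:9} identifies the hit-and-miss topology on $\ftH X$ with the Vietoris topology of the underlying ordered compact Hausdorff space, which is where the paper's justification stops (the classical fact for Nachbin spaces is simply invoked). You go on to actually prove that classical fact, and both halves check out: Hausdorffness via Theorem~\ref{d:thm:3} applied to the disjoint closed decreasing set $\downc x$ and the closed increasing set $B$, and compactness by realising $\ftH X$ as the continuous surjective image of the full Vietoris hyperspace of closed sets under $A\mapsto\upc A$, with Lemma~\ref{d:lem:2} supplying the only nontrivial well-definedness point and the co-increasing/co-decreasing conditions making the preimage computations exact. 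The net effect is a self-contained proof of what the paper treats as known background.
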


\begin{proposition}\label{d:prop:6}
  For every $\V$-categorical compact Hausdorff space $X$, $\ftH X$ equipped with
  the hit-and-miss topology and the Hausdorff structure is a $\V$-categorical
  compact Hausdorff space.
\end{proposition}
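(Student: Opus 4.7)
My plan is to verify the compatibility criterion from Corollary~\ref{d:cor:1}: given $A,B\in\ftH X$ and $u\in\V$ with $u\not\le \ftH a(A,B)$, I produce hit-and-miss neighbourhoods $\mathcal{U}$ of $A$ and $\mathcal{V}$ of $B$ such that $u\not\le \ftH a(A',B')$ for all $A'\in\mathcal{U}$ and $B'\in\mathcal{V}$; the compact Hausdorff half of the structure is already granted by Proposition~\ref{d:prop:5}. Since $\ftH a(A,B)=\bigwedge_{y\in B}\bigvee_{x\in A}a(x,y)$, I would first pick $y_0\in B$ with $u\not\le\bigvee_{x\in A}a(x,y_0)$ and, using that $\V$ is completely distributive, refine to some $u'\lll u$ still satisfying $u'\not\le\bigvee_{x\in A}a(x,y_0)$; in particular $u'\not\le a(x,y_0)$ for every $x\in A$.

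Next I would apply Corollary~\ref{d:cor:3} pointwise to produce, for each $x\in A$, an $(X,\alpha_\le)^\op$-open neighbourhood $U_x\ni x$ and an $(X,\alpha_\le)$-open neighbourhood $W_x\ni y_0$ with $u'\not\le a(x',y')$ throughout $U_x\times W_x$. Because $A$ is $\alpha$-closed and hence compact in $(X,\alpha_\le)^\op$, finitely many $U_{x_i}$ already cover $A$; setting $U:=U_{x_1}\cup\dots\cup U_{x_n}$ and $V:=W_{x_1}\cap\dots\cap W_{x_n}$ yields an open co-decreasing $U\supseteq A$, an open co-increasing $V\ni y_0$, and the uniform separation $u'\not\le a(x',y')$ on $U\times V$. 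Then $\mathcal{U}:=U^\Box$ is a hit-and-miss neighbourhood of $A$ and $\mathcal{V}:=V^\Diamond$ a hit-and-miss neighbourhood of $B$ (since $y_0\in B\cap V$); for $A'\in\mathcal{U}$, $B'\in\mathcal{V}$, and any chosen $y'\in B'\cap V$, one has $u'\not\le a(x',y')$ for every $x'\in A'\subseteq U$, and the defining property of the completely-below relation $u'\lll u$ forces $u\not\le\bigvee_{x'\in A'}a(x',y')$; since $\ftH a(A',B')\le\bigvee_{x'\in A'}a(x',y')$, this gives $u\not\le \ftH a(A',B')$.

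The main obstacle I foresee is precisely the mismatch between the pointwise information supplied by continuity of $a$ and the supremum-based formula for $\ftH a$: from $u\not\le a(x',y')$ for each $x'$ one cannot in general deduce $u\not\le\bigvee_{x'}a(x',y')$. Approximating $u$ from below by some $u'\lll u$ and then exploiting the reflection property ``$u'\lll u$ together with $u'\not\le s$ for all $s\in S$ forces $u\not\le\bigvee S$'' is the key device that bridges this gap, and the complete distributivity of $\V$ embedded in Assumption~\ref{d:ass:4} is what guarantees its availability.
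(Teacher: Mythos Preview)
Your proof is correct and follows essentially the same route as the paper's: verify the criterion of Corollary~\ref{d:cor:1} by approximating $u$ from below via some $u'\lll u$, separating each $x\in A$ from $y_0$ by continuity of $a$, passing to a finite subcover by compactness of $A$, and then using the defining property of $\lll$ to conclude $u\not\le\ftH a(A',B')$. The only notable difference is that you invoke Corollary~\ref{d:cor:3} to obtain the separating neighbourhoods directly in $(X,\alpha_\le)^\op$ and $(X,\alpha_\le)$---which automatically makes $U$ co-decreasing and $V$ co-increasing---whereas the paper uses Corollary~\ref{d:cor:1} to get ordinary $\alpha$-open neighbourhoods and then appeals to Proposition~\ref{d:prop:1} (and its dual) to refine them to the required shape; your shortcut is a mild simplification but not a genuinely different argument.
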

\begin{proof}
  Consider a $\V$-categorical compact Hausdorff space $(X,a,\alpha)$. To
  establish the compatibility between the topology and the Hausdorff structure, we use
  Corollary~\ref{d:cor:1}. Let $A,B\in\ftH X$ and $u\in\V$. Assume
  $u\not\le \ftH a(A,B)$. Since $\V$ is (ccd), there is some $v\lll u$ with
  $v\not\le \ftH a(A,B)$. Hence, there is some $y\in B$ with
  $v\not\le \bigvee_{x\in A}a(x,y)$. Therefore $v\not\le a(x,y)$ for all
  $x\in A$. By Corollary~\ref{d:cor:1} and compactness of $A$, there exist open
  subsets $U,V\subseteq X$ with $A\subseteq U$, $y\in V$, and
  $v\not\le a(x',y')$ for all $x'\in U$ and $y'\in V$; by
  Proposition~\ref{d:prop:1}, we may assume that $U$ is co-decreasing and $V$ is
  co-increasing. We conclude that
  \[
    A\in U^\Box,\quad B\in V^\Diamond,\quad u\not\le \ftH a(A',B')
  \]
  for all $A'\in U^\Box$ and $B'\in V^\Diamond$.
\end{proof}

\begin{lemma}\label{d:lem:7}
  Let $f \colon X\to Y$ be in $\CatCHs{\V}$. Then the map
  \[
    \ftH f \colon \ftH X \longrightarrow\ftH Y,\,A\longmapsto\upc f(A)
  \]
  is continuous and a $\V$-functor.
\end{lemma}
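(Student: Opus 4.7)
The plan is to split the verification into three parts: (i) that $\ftH f$ is well-defined, (ii) that it is a $\V$-functor, and (iii) that it is continuous with respect to the hit-and-miss topologies.

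For (i), given $A\in\ftH X$, the set $A$ is closed in the compact Hausdorff space $(X,\alpha)$, hence compact; so $f(A)$ is compact in $(Y,\beta)$, and therefore closed. By Lemma~\ref{d:lem:2}, $\upc^b f(A)$ is closed in $(Y,\beta)$, hence an element of $\ftH Y$. Assertion (ii) is essentially what was already established for the Hausdorff functor on $\Cats{\V}$ in Section~\ref{sec:hausdorff-functor}: combining Lemma~\ref{d:lem:6} with monotonicity and functoriality of $f$ yields $\ftH a(A,B)\le \ftH b(\upc f(A),\upc f(B))$ for all $A,B\in\ftH X$.

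For (iii), I plan to show that $\ftH f$ pulls subbasic open sets back to subbasic open sets. Let $V\subseteq Y$ be open and co-increasing. Since $f$ is continuous and, being a $\V$-functor, monotone with respect to the underlying orders, Lemma~\ref{d:lem:4} ensures that $f^{-1}(V)$ is open and co-increasing in $X$. The key claim is that
\[
    (\ftH f)^{-1}(V^\Diamond) = (f^{-1}(V))^\Diamond,
\]
which reduces to the equivalence $\upc f(A)\cap V\neq\varnothing \iff f(A)\cap V\neq\varnothing$. The nontrivial direction uses Lemma~\ref{d:lem:9}: since $f(A)$ is closed, $\upc^b f(A)=\upc^\le f(A)$, so any $y\in \upc f(A)\cap V$ sits above some $f(x)$ with $x\in A$; the assumption that $V^\complement$ is increasing forces $f(x)\in V$. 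The case of a subbasic open $W^\Box$ with $W$ open and co-decreasing is dual: $f^{-1}(W)$ is again open and co-decreasing, and the equivalence $\upc f(A)\subseteq W \iff f(A)\subseteq W$ follows from Lemma~\ref{d:lem:9} together with the fact that $W^\complement$ is decreasing, giving $(\ftH f)^{-1}(W^\Box)=(f^{-1}(W))^\Box$.

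The main obstacle is the usual mismatch between the $\V$-categorical up-closure $\upc^b$ and the order-theoretic up-closure $\upc^\le$, since the subbasic opens of the hit-and-miss topology are phrased in terms of (co-)increasing/(co-)decreasing subsets with respect to the underlying order. Lemma~\ref{d:lem:9} is precisely the tool that bridges the two notions on closed sets, and once it is invoked, the remaining arguments are straightforward manipulations of set-theoretic intersections and inclusions.
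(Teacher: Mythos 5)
Your argument is correct. Note that the paper states Lemma~\ref{d:lem:7} without proof, so there is no ``official'' argument to compare against; yours is the natural one and it holds up: well-definedness via compactness of closed subsets and Lemma~\ref{d:lem:2}, $\V$-functoriality exactly as in Section~\ref{sec:hausdorff-functor}, and continuity via the two subbasic preimage identities. One small remark: for the $V^\Diamond$ case you do not actually need Lemma~\ref{d:lem:9} --- if $f(A)\subseteq V^\complement$ then $\upc f(A)\subseteq\upc(V^\complement)=V^\complement$ directly, since $V^\complement$ is increasing and $\upc$ is monotone; so the witness extraction is superfluous there. By contrast, for the $W^\Box$ case the bridge $\upc^b f(A)=\upc^\le f(A)$ from Lemma~\ref{d:lem:9} (hence Assumptions~\ref{d:ass:4} and~\ref{d:ass:1}, in force throughout the section) is genuinely needed to produce a single $x\in A$ with $f(x)\le y$, since $y\in\upc^b f(A)$ only asserts $k\le\bigvee_{x\in A}b(f(x),y)$; you identify this correctly, and your use of the closedness of $f(A)$ to invoke that lemma is exactly right.
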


Clearly, the construction of Lemma~\ref{d:lem:7} defines a functor
\[
  \ftH \colon\CatCHs{\V} \longrightarrow\CatCHs{\V}.
\]
Moreover:

\begin{proposition}
  \label{p:6}
  The diagrams
  \begin{center}
    \begin{tikzcd}[row sep=large, column sep=large]
      \ORDCH \ar[r,"\ftH"]\ar[d,""'] & \ORDCH \ar[d,""] \\
      \CatCHs{\V} \ar[r,"\ftH"']     & \CatCHs{\V}
    \end{tikzcd}
    \qquad
    \begin{tikzcd}[row sep=large, column sep=large,ampersand replacement=\&]
      \CatCHs{\V} \ar[r,"\ftH"]\ar[d,""'] \& \CatCHs{\V} \ar[d,""] \\
      \ORDCH \ar[r,"\ftH"']               \& \ORDCH
    \end{tikzcd}
  \end{center}
  of functors commutes.
\end{proposition}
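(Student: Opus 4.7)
The plan is to verify, for each diagram separately, that the two paths produce the same underlying set, the same topology, the same $\V$-category (respectively order) structure, and the same action on morphisms. All four checks reduce to consequences of Lemma~\ref{d:lem:9}, the observation preceding Proposition~\ref{d:prop:5}, and Corollary~\ref{d:cor:2}.

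For the right-hand diagram, fix $(X,a,\alpha)\in\CatCHs{\V}$ with underlying order $\le$. A subset $A\subseteq X$ is closed and increasing for $a$ if and only if it is closed and increasing for $\le$: indeed, any closed subset $A$ satisfies $\upc^a A = \upc^\le A$ by Lemma~\ref{d:lem:9}, so the two notions of ``increasing'' agree on closed subsets. Hence the underlying set of $\ftH X$ is the same whether computed in $\CatCHs{\V}$ or in $\ORDCH$. The hit-and-miss topology on $\ftH X$ is generated by $V^\Diamond$ and $W^\Box$ for $V,W$ open co-increasing, respectively co-decreasing; by the same lemma the notions of co-increasing/co-decreasing open set agree with the order-theoretic ones, so this topology coincides with the Vietoris topology of the underlying ordered compact Hausdorff space. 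The underlying order of $\ftH a$ is $\supseteq$ by Corollary~\ref{d:cor:2}, which is also the order placed on $\ftH X$ in $\ORDCH$. Finally, for $f\colon X\to Y$ in $\CatCHs{\V}$, the image $f(A)$ of a closed subset $A$ is compact and hence closed, so $\upc^a f(A)=\upc^\le f(A)$ by Lemma~\ref{d:lem:9}; this is precisely the value of $\ftH f$ computed in $\ORDCH$.

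For the left-hand diagram, the inclusion $\ORDCH\hookrightarrow\CatCHs{\V}$ equips an ordered compact Hausdorff space $(X,\le,\alpha)$ with the $\V$-category structure $a$ whose underlying order is again $\le$. The previous paragraph then applies verbatim to $(X,a,\alpha)$: the set $\ftH X$, the hit-and-miss topology, the order $\supseteq$ and the morphism action $A\mapsto \upc f(A)$ are all unchanged when we first embed and then apply $\ftH$ instead of first applying $\ftH$ in $\ORDCH$ and then embedding. Since a $\V$-categorical compact Hausdorff space coming from $\ORDCH$ is determined by its order, the two sides of the diagram agree.

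The main obstacle is purely bookkeeping: making sure that ``increasing'', ``co-increasing'', and ``co-decreasing'' are interpreted consistently in $\CatCHs{\V}$ and in $\ORDCH$ so that the two topologies actually coincide. This is exactly what Lemma~\ref{d:lem:9} is designed to handle, so once it is invoked the proof consists of recording the identifications above.
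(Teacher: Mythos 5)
Your proof is correct and follows essentially the same route the paper takes implicitly (the paper gives no explicit argument, relying on the remark before Proposition~\ref{d:prop:5} that, by Lemma~\ref{d:lem:9}, the closed increasing subsets, the generating opens of the hit-and-miss topology, the underlying order $\supseteq$, and the assignment $A\mapsto\upc f(A)$ all agree with their order-theoretic counterparts). The one imprecise step is the last sentence on the left-hand square: two $\V$-category structures with the same underlying order need not coincide, so to conclude that both composites give the \emph{same} object of $\CatCHs{\V}$ you should add the one-line observation that $i\colon\two\to\V$ preserves the suprema and infima occurring in the Hausdorff formula, whence $\ftH(i\cdot a)=i\cdot(\ftH a)$ on the nose --- which is exactly how the paper argues when it establishes $\ftI\cdot\ftH'=\ftH\cdot\ftI$ in the corollary following Proposition~\ref{p:23}.
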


\begin{remark}
  \label{p:7}
  Despite the commutative diagrams of Proposition~\ref{p:6} above, we cannot
  apply Theorem~\ref{p:3} because, in general, the functors are not topological.
  In fact, even the functor $\MET \to \ORD$ fails to be fibre-complete since a
  metric $d$ in the fibre of $\{0 \leq 1 \}$ is completely determined by the
  value $d(1,0) \in (0, \infty]$. On the other hand, the functor
  $\CatCHs{\V} \to \COMPHAUS$ is topological and it is easy to see that the
  Hausdorff $\V$-category structure is compatible with the classical Vietoris
  topology on $\COMPHAUS$. Therefore, Theorem~\ref{p:3} tell us that equipping
  the Vietoris space on $\COMPHAUS$ with the Hausdorff structure yields a
  ``powerset kind of'' functor on $\CatCHs{\V}$ that, in some sense, disregards
  the $\V$-category structure of the objects, but whose category of coalgebras
  is (co)complete.
\end{remark}

\begin{theorem}
  \label{d:thm:4}
  The functor $\ftH$ is part of a Kock--Z\"oberlein monad $\mH=\hmonad$ on
  $\CatCHs{\V}$; for every $X$ in $\CatCHs{\V}$, the components $\coyoneda_X$ and $\coyonmult_X$ are given by
  \begin{align*}
    \coyoneda_X \colon X & \longrightarrow \ftH X, & \coyonmult_X \colon\ftH\ftH X & \longrightarrow\ftH X.\\
    x & \longmapsto \upc x & \mathcal{A} &\longmapsto \bigcup \mathcal{A}
  \end{align*}
\end{theorem}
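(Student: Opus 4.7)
The plan is to lift the Kock--Z\"oberlein monad structure on $\Cats{\V}$ established after Remark \ref{d:rem:3} to $\CatCHs{\V}$, adjoining the topological content provided by the classical Vietoris Kock--Z\"oberlein monad on $\ORDCH$. The commutative diagrams of Proposition \ref{p:6} together with Lemma \ref{d:lem:9} (which, as noted before Proposition \ref{d:prop:5}, forces the hit-and-miss topology on $\ftH X$ to coincide with the Vietoris topology of the underlying ordered compact Hausdorff reduct) ensure that the two descriptions of $\ftH$ agree both on underlying $\V$-categories and on underlying ordered compact Hausdorff spaces, so the structures ought to glue into a single monad on $\CatCHs{\V}$ with the formulas stated in the theorem.

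The first and only substantial step is to verify that $\coyoneda_X$ and $\coyonmult_X$ are morphisms in $\CatCHs{\V}$. Their $\V$-functoriality is part of the statement following Remark \ref{d:rem:3}. For continuity, the coincidence of topologies recalled above reduces the problem to continuity of the unit and multiplication of the Vietoris monad on $\ORDCH$, which is classical: a direct inspection of preimages of the subbasic opens $V^\Diamond$ and $W^\Box$ yields
\[
  \coyoneda_X^{-1}(V^\Diamond) = V, \quad \coyoneda_X^{-1}(W^\Box) = W, \quad
  \coyonmult_X^{-1}(V^\Diamond) = (V^\Diamond)^\Diamond, \quad \coyonmult_X^{-1}(W^\Box) = (W^\Box)^\Box,
\]
all of which are open. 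Combining this with the identity $\bigcup \mathcal{A} = \coyoneda_X^{-1}(\mathcal{A})$ of Remark \ref{d:rem:3} also shows that $\bigcup \mathcal{A}$ is closed whenever $\mathcal{A} \in \ftH\ftH X$, so the formula for $\coyonmult_X$ indeed lands in $\ftH X$.

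Once $\coyoneda$ and $\coyonmult$ are known to be $\CatCHs{\V}$-morphisms, naturality and the three monad axioms transport from the $\Cats{\V}$-monad by faithfulness of the forgetful functor $\CatCHs{\V} \to \Cats{\V}$. Likewise, the Kock--Z\"oberlein adjunction $\coyonmult_X \dashv \ftH \coyoneda_X$, already available in $\Cats{\V}$ by Remark \ref{d:rem:3}, lifts automatically along this faithful $2$-functorial forgetful functor because both its sides are now $\CatCHs{\V}$-morphisms. The continuity verification is thus the only genuinely non-routine point; everything else is assembly of previously established data along the commuting square of forgetful functors in Proposition \ref{p:6}.
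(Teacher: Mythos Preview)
The paper states Theorem~\ref{d:thm:4} without proof, so there is no argument of its own to compare against. Your proposal supplies a correct and well-organised justification: you reduce the $\CatCHs{\V}$-monad to the already established $\Cats{\V}$-monad (Remark~\ref{d:rem:3}) together with the classical Vietoris monad on $\ORDCH$, using Proposition~\ref{p:6} and the observation preceding Proposition~\ref{d:prop:5} to glue the two. The continuity computations for $\coyoneda_X$ and $\coyonmult_X$ via subbasic preimages are accurate, the argument that $\bigcup\mathcal{A}$ is closed via $\coyoneda_X^{-1}(\mathcal{A})$ is exactly right, and transporting the monad equations and the KZ adjunction $\coyonmult_X\dashv\ftH\coyoneda_X$ along the faithful (indeed locally full) forgetful $2$-functor $\CatCHs{\V}\to\Cats{\V}$ is legitimate. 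One small point you leave implicit is that $\upc x$ is closed in $(X,\alpha)$, so that $\coyoneda_X$ really lands in $\ftH X$; this follows from Lemma~\ref{d:lem:2} (or directly from the $\ORDCH$ picture, since $\upc^a x=\upc^{\le}x$ for singletons), and is worth a half-line.
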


We recall from \cite{HT10} that to every $\V$-category one can associate a
canonical closure operator which generalises the classic topology associated to
a metric space.

\begin{proposition}
  For every $\V$-category $(X,a)$, $A\subseteq X$ and $x\in X$,
  \begin{equation*}
    x\in\overline{A}\iff k\le \bigvee_{z\in A}a(x,z)\otimes a(z,x).
  \end{equation*}
  Moreover, the closure operator $\overline{(-)}$ is topological for every
  $\V$-category and defines a functor
  \begin{equation*}
    \ftL_\V \colon \Cats{\V} \longrightarrow\TOP
  \end{equation*}
  which commutes with the forgetful functors to $\SET$. Moreover,
  $L_\V(X)=L_\V(X^\op)$ for every $\V$-category $X$.
\end{proposition}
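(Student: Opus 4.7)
The plan is to take the displayed equivalence as the definition of $\overline{A}$ and then verify that $\overline{(-)}$ satisfies the Kuratowski closure axioms; functoriality and self-duality will then follow from direct manipulation of the formula.

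First, I would verify extensivity, monotonicity, the empty-set axiom and finite additivity. Extensivity $A\subseteq\overline{A}$ is obtained by taking $z=x$ in the defining join and using reflexivity of $a$, which gives $k=k\otimes k\le a(x,x)\otimes a(x,x)$. Monotonicity in $A$ is immediate from the properties of joins, and $\overline{\varnothing}=\varnothing$ holds because the empty join is $\bot$ and, by Assumption~\ref{d:ass:1}, $k\nleq\bot$. For $\overline{A\cup B}=\overline{A}\cup\overline{B}$, the $\supseteq$-inclusion is monotonicity; for the reverse inclusion, split the defining join as $u\vee v$ with $u$ indexed over $A$ and $v$ over $B$, and apply the implication $k\le u\vee v\Rightarrow k\le u\text{ or }k\le v$ available under Assumption~\ref{d:ass:1}.

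The main obstacle is idempotence $\overline{\overline{A}}\subseteq\overline{A}$, which I would establish through a double application of transitivity. For $x\in\overline{\overline{A}}$, $y\in\overline{A}$ and $z\in A$, two uses of transitivity yield
\begin{equation*}
  a(x,y)\otimes a(y,z)\otimes a(z,y)\otimes a(y,x)\le a(x,z)\otimes a(z,x).
\end{equation*}
Joining over $z\in A$ and using $k\le\bigvee_{z\in A}a(y,z)\otimes a(z,y)$ gives $a(x,y)\otimes a(y,x)\le\bigvee_{z\in A}a(x,z)\otimes a(z,x)$; joining this inequality over $y\in\overline{A}$ and using $k\le\bigvee_{y\in\overline{A}}a(x,y)\otimes a(y,x)$ then produces $x\in\overline{A}$.

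For functoriality and self-duality, I would argue directly from the formula. A $\V$-functor $f\colon(X,a)\to(Y,b)$ satisfies $a(x,z)\otimes a(z,x)\le b(f(x),f(z))\otimes b(f(z),f(x))$, so $x\in\overline{A}$ implies $f(x)\in\overline{f(A)}$; this shows continuity between the induced topological spaces, and since $\ftL_\V$ does not alter the underlying set, commutativity with the forgetful functor to $\SET$ is automatic. Finally, $\ftL_\V(X)=\ftL_\V(X^\op)$ is immediate from commutativity of $\otimes$, which gives $a^\circ(x,z)\otimes a^\circ(z,x)=a(z,x)\otimes a(x,z)=a(x,z)\otimes a(z,x)$, leaving the defining join unchanged.
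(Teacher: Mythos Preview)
Your argument is correct. The paper itself gives no proof here; it simply writes ``See \cite{HT10}'', so there is no in-paper approach to compare against. What you have done is supply the verification that the citation stands in for: checking the Kuratowski axioms directly from the defining formula, then reading off functoriality and self-duality.

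A couple of remarks on presentation. Your uses of Assumption~\ref{d:ass:1} (for $k\nleq\bot$ and for $\vee$-primeness of $k$) are legitimate, since that assumption is in force throughout Section~\ref{sec:adding-topology}; but it is worth being explicit that without it the operator $\overline{(-)}$ is in general only a preclosure (extensive, monotone, idempotent) rather than a Kuratowski closure, since finite additivity and $\overline{\varnothing}=\varnothing$ can fail. This matches the treatment in \cite{HT10}. In the idempotence step you silently use that $\otimes$ preserves joins in each variable to pull $a(x,y)$ and $a(y,x)$ outside the join over $z$; it would not hurt to say so. Otherwise the proof is complete and is exactly the elementary argument one would expect the cited reference to contain.
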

\begin{proof}
  See \cite{HT10}.
\end{proof}

Recall that we assume $\tbdown k$ to be directed.

\begin{proposition}
  \begin{enumerate}
  \item For every $\V$-category $(X,a)$, the topology of $\ftL_\V(X,a)$ is
    generated by the \emph{left centered balls}
    \begin{equation*}
      \OLB(x,u)=\{y\in X\mid u\lll a(x,y)\}\qquad (x\in X,\,u\lll k)
    \end{equation*}
    end the \emph{right centered balls}
    \begin{equation*}
      \ORB(x,u)=\{y\in X\mid u\lll a(y,x)\}\qquad (x\in X,\,u\lll k).
    \end{equation*}
  \item For every separated $\V$-category $(X,a)$, the space $\ftL(X,a)$ is
    Hausdorff.
  \end{enumerate}
\end{proposition}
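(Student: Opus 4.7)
For part (1), the plan is to verify that the topology $\tau_B$ generated by the subbasis $\{\OLB(x,u),\, \ORB(x,u) \mid x \in X,\, u \lll k\}$ coincides with the topology $\tau_c$ of $\ftL_\V(X,a)$.

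For $\tau_B \subseteq \tau_c$, I would show each $\OLB(x,u)$ is $\tau_c$-open by proving that $F = \{y \mid u \not\lll a(x,y)\}$ is $\tau_c$-closed. Assuming for contradiction that $y \in \overline{F}$ with $u \lll a(x,y)$, I would interpolate $u \lll u_1 \lll a(x,y)$ and then apply complete distributivity to the identity $a(x,y) = a(x,y) \otimes k = \bigvee_{w \lll k} a(x,y) \otimes w$ (a directed sup, since $\tbdown k$ is directed) to produce $w \lll k$ with $u_1 \le a(x,y) \otimes w$. The hypothesis $y \in \overline{F}$ reads $k \le \bigvee_{z \in F} a(y,z) \otimes a(z,y)$, so the totally-below property applied to $w \lll k$ supplies $z_0 \in F$ with $w \le a(y,z_0) \otimes a(z_0,y)$. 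Transitivity of $a$ and the fact that $k$ is the top of the value quantale $\V$ then give
\[
  u_1 \le a(x,y) \otimes a(y,z_0) \otimes a(z_0,y) \le a(x,z_0) \otimes a(z_0,y) \le a(x,z_0),
\]
so $u \lll u_1 \le a(x,z_0)$ yields $u \lll a(x,z_0)$, contradicting $z_0 \in F$. The analogous statement for $\ORB(x,u)$ follows by applying the same argument in $X^\op$ and using $\ftL_\V(X) = \ftL_\V(X^\op)$.

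The converse inclusion $\tau_c \subseteq \tau_B$ is more direct. Given a $\tau_c$-open $U$ with $y \in U$, the condition $y \notin \overline{U^\complement}$ reads $k \not\le \bigvee_{z \in U^\complement} a(y,z) \otimes a(z,y)$. If no centered ball $\OLB(y,u) \cap \ORB(y,u)$ with $u \lll k$ were contained in $U$, then for each $u \lll k$ one could pick $z_u \in \OLB(y,u) \cap \ORB(y,u) \cap U^\complement$ satisfying $u \otimes u \le a(y,z_u) \otimes a(z_u,y)$. The identity $k \le \bigvee_{u \lll k} u \otimes u$ would then force $k \le \bigvee_{z \in U^\complement} a(y,z) \otimes a(z,y)$, a contradiction.

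For part (2), take $x \ne y$ in the separated space $(X,a)$. Separation forces $k \not\le a(x,y)$ or $k \not\le a(y,x)$; assume the former. Directedness of $\tbdown k$ with supremum $k$ yields $u_0 \lll k$ with $u_0 \not\le a(x,y)$, and the totally-below property applied to $u_0 \lll k \le \bigvee_{u \lll k} u \otimes u$ (a directed sup) then produces $u_1 \lll k$ with $u_0 \le u_1 \otimes u_1$. Transitivity forbids any $z \in \OLB(x, u_1) \cap \ORB(y, u_1)$, since such a $z$ would force $a(x,y) \ge a(x,z) \otimes a(z,y) \ge u_1 \otimes u_1 \ge u_0$. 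As $u_1 \lll k \le a(x,x)$ and $u_1 \lll k \le a(y,y)$, one has $x \in \OLB(x, u_1)$ and $y \in \ORB(y, u_1)$, providing disjoint open neighbourhoods. The main delicate point throughout is the bookkeeping of interpolation together with the interaction between $\lll$ and $\otimes$ afforded by the value-quantale hypothesis.
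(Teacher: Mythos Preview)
Your argument for part~(2) is essentially the paper's own: the paper also picks $u\lll k$ with $u\nleq a(x,y)$, then $v,w\lll k$ with $u\le v\otimes w$ (you simply take $v=w=u_1$), and separates $x,y$ by $\OLB(x,v)$ and $\ORB(y,w)$.

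For part~(1) the paper gives no self-contained argument, merely citing \cite{HR13} and \cite{Fla92}; so your two-inclusion proof goes beyond what the paper writes out. The direction $\tau_c\subseteq\tau_B$ is clean and uses exactly the lemma $k\le\bigvee_{u\lll k}u\otimes u$ that the paper records. For $\tau_B\subseteq\tau_c$, however, the step
\[
  a(x,z_0)\otimes a(z_0,y)\;\le\; a(x,z_0)
\]
rests entirely on your claim that $k$ is the top of $\V$. This hypothesis is \emph{genuinely} needed: without integrality the statement of part~(1) can fail. For instance, take $\V=P(\mathbb{Z}/2\mathbb{Z})$ with the convolution tensor (so $k=\{0\}\neq\top=\{0,1\}$; this $\V$ is ccd and $\tbdown k=\{\varnothing,\{0\}\}$ is directed), and the two-point $\V$-category $X=\{p,q\}$ with $a(p,p)=a(q,q)=\{0\}$, $a(p,q)=a(q,p)=\{1\}$. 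Then $a(p,q)\otimes a(q,p)=\{0\}\supseteq k$, so $\overline{\{q\}}=X$ and $\tau_c$ is indiscrete; yet $\OLB(p,\{0\})=\{p\}$, so $\tau_B$ is discrete.

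Integrality is indeed part of Flagg's definition of a value quantale, so your reading is defensible and matches the references the paper invokes. But note that the paper's explicit Assumptions~\ref{d:ass:4} and~\ref{d:ass:1} do not list $k=\top$, and the later Assumption~\ref{d:ass:2} would be automatic under integrality; so you should state clearly that you are adopting Flagg's convention $k=\top$ rather than treating it as a consequence of what the paper has written down.
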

\begin{proof}
  Regarding first statement, see \cite[Remark~4.21]{HR13} and \cite{Fla92}. The
  proof of the second statement is analogous to the one for classic metric
  spaces. In fact, assume that $(X,a)$ is separated and let $x,y\in X$ with
  $x\neq y$. Without loss of generality, we may assume that $k\nleq
  a(x,y)$. Hence, there is some $u\lll k$ with $u\nleq a(x,y)$. Take $v,w\lll k$
  with $u\le v\otimes w$. Then
  \begin{equation*}
    \OLB(x,v)\cap \ORB(y,w)=\varnothing
  \end{equation*}
  since, if $z\in\OLB(x,v)\cap \ORB(y,w)$, then
  \begin{equation*}
    u\le v\otimes w\le a(x,z)\otimes a(z,y)\le a(x,y),
  \end{equation*}
  a contradiction.
\end{proof}

Until the end of this section we require also the following condition.
\begin{assumption}\label{d:ass:2}
  For all $u,v\in\V$,
  \begin{equation*}
    (k \leq u \otimes v)
    \implies (k \leq u\quad\text{and}\quad k \leq v).
  \end{equation*}
\end{assumption}

\begin{remark}\label{d:rem:2}
  For every subset $A\subseteq X$ of a $\V$-category $(X,a)$,
  \begin{equation*}
    \overline{A}\subseteq \upc A\cap\downc A.
  \end{equation*}
  In fact, if $x\in\overline{A}$, then
  \begin{equation*}
    k\le \bigvee_{z\in A}(a(x,z)\otimes a(z,x))
    \le \left(\bigvee_{z\in A}a(x,z)\right)\otimes
    \left(\bigvee_{z\in A}a(z,x)\right)
  \end{equation*}
  and therefore $k\le \bigvee_{z\in A}a(x,z)$ and $k\le\bigvee_{z\in
    A}a(z,x))$. In particular, every increasing and every decreasing subset of
  $X$ are closed with respect to the closure operator of $(X,a)$.
\end{remark}

\begin{corollary}
  The identity map on $\V$ is continuous of type $\ftL\V\to(\V,{\xi_\le})$.
\end{corollary}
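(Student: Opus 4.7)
The plan is to verify continuity by checking preimages of a subbasis for $\xi_\le$. By Remark~\ref{d:rem:6}, the sets of the form $(\upc v)^\complement = \{u \in \V \mid v \not\le u\}$, for $v \in \V$, form a subbasis of open sets for $\xi_\le$. It therefore suffices to show that, for each fixed $v \in \V$, the set $(\upc v)^\complement$ is open in $\ftL\V$; that is, around every point $u$ with $v \not\le u$ I must exhibit a neighbourhood in the left/right centered-ball topology that avoids $\upc v$.

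First I would translate $v \not\le u$ into the language of the internal hom. Since $k \otimes v = v$, the inequality $v \le u$ is equivalent to $k \le \hom(v,u)$, so $v \not\le u$ is equivalent to $k \not\le \hom(v,u)$. Because $\V$ is completely distributive and $\tbdown k$ is directed (Assumption~\ref{d:ass:1}), we have $k = \bigvee \tbdown k$, so the failure $k \not\le \hom(v,u)$ produces some $w \lll k$ with $w \not\le \hom(v,u)$. This $w$ is the candidate radius.

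Next I would verify that the right centered ball $\ORB(u, w) = \{u' \in \V \mid w \lll \hom(u', u)\}$ is the desired neighbourhood. On the one hand, $k \le \hom(u,u)$, and $w \lll k$ together with the monotonicity property of $\lll$ yield $w \lll \hom(u, u)$, so $u \in \ORB(u, w)$. On the other hand, if $u' \in \ORB(u, w)$ were to satisfy $v \le u'$, antitonicity of $\hom(-, u)$ would give $\hom(u', u) \le \hom(v, u)$, hence $w \le \hom(u', u) \le \hom(v, u)$, contradicting the choice of $w$. Therefore $\ORB(u, w) \subseteq (\upc v)^\complement$.

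The only potentially delicate step is the use of complete distributivity to produce the totally-below element $w$; everything else is just an adjointness manipulation with the internal hom and the definition of the right centered ball. Since the $(\upc v)^\complement$ form a subbasis of $\xi_\le$ and each has been shown to be open in $\ftL\V$, the identity map $\ftL\V \to (\V, \xi_\le)$ is continuous.
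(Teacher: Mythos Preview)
Your argument is correct, but it takes a different route from the one the paper has in mind. In the paper the corollary is placed immediately after Remark~\ref{d:rem:2}, and the intended proof is simply to apply that remark to the $\V$-category $(\V,\hom)$: each $\upc v$ is an increasing subset of $(\V,\hom)$, hence closed in $\ftL\V$, so its complement is open; since the sets $(\upc v)^\complement$ form a subbasis of $\xi_\le$, continuity follows. You instead bypass Remark~\ref{d:rem:2} entirely and exhibit, for each $u$ with $v\not\le u$, a concrete right-centred ball $\ORB(u,w)$ inside $(\upc v)^\complement$. Your approach has the advantage of relying only on Assumption~\ref{d:ass:1} (directedness of $\tbdown k$) and not on Assumption~\ref{d:ass:2}, whereas the paper's one-line deduction leans on the general closure statement, which does use Assumption~\ref{d:ass:2}. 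The trade-off is brevity: the paper's route is a single sentence once Remark~\ref{d:rem:2} is in place, while yours reproves a special case of that remark by hand.
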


Recall from \cite[Proposition~3.29]{HR18} that the identity map on $X\times X$
is continuous of type
\begin{equation*}
  L_\V(X,a)\times L_\V(X,a) \longrightarrow L_\V((X,a)\otimes (X,a)),
\end{equation*}
for every $\V$-category $(X,a)$; hence, the composite map
\begin{equation*}
  L_\V(X,a)^\op\times L_\V(X,a) \longrightarrow L_\V((X,a)^\op\otimes (X,a))
  \xrightarrow{\quad a\quad}L_\V\V \longrightarrow (\V,{\xi_\le})
\end{equation*}
is continuous. Therefore, if $(X,a)$ is separated and $L_\V(X,a)$ is compact,
then these two structures define a $\V$-categorical compact Hausdorff space. In
fact, with $\Cats{\V}_\ch$ denoting the full subcategory of $\Cats{\V}_\sep$
defined by those $\V$-categories $X$ where $\ftL_\V$ is compact, the
construction above defines a functor $\Cats{\V}_\ch\to\CatCHs{\V}$.

For classical compact metric spaces, it is well-known that the Hausdorff
metric induces the hit-and-miss topology. Below we give an asymmetric version of
this result in the context of $\V$-categories.

\begin{lemma}\label{d:lem:10}
  For the $\V$-categorical compact Hausdorff space induced by a compact
  separated $\V$-category $X$, the hit-and-miss topology on $\ftH X$ coincides
  with the topology induced by the Hausdorff structure on $\ftH X$.
\end{lemma}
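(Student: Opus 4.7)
\emph{Proof sketch.}
My plan is to prove the two inclusions of topologies separately by comparing their subbases: write $\tau_m$ for the hit-and-miss topology on $\ftH X$ and $\tau_H=\ftL_\V(\ftH X,\ftH a)$ for the topology induced by the Hausdorff $\V$-category structure. For $\tau_m\subseteq\tau_H$ I would apply Lemma~\ref{d:lem:5} directly: given $V^\Diamond$ (with $V$ open and co-increasing) and $\mathcal{A}\in V^\Diamond$, pick $y_0\in \mathcal{A}\cap V$ and apply Lemma~\ref{d:lem:5} to the disjoint closed pair $V^\complement$ and $\{y_0\}$ to obtain $u\lll k$ with $u\not\le a(x,y_0)$ for every $x\in V^\complement$; then any $\mathcal{B}\in\ORB(\mathcal{A},u)$ satisfies $u\lll\bigvee_{y\in\mathcal{B}}a(y,y_0)$, so by complete distributivity of $\V$ some $y\in\mathcal{B}$ has $u\le a(y,y_0)$, forcing $y\in V$ and $\mathcal{B}\in V^\Diamond$. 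The $W^\Box$ case is the symmetric argument applied to $\mathcal{A}$ and $W^\complement$, producing $\OLB(\mathcal{A},u)\subseteq W^\Box$.

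For the reverse inclusion $\tau_H\subseteq\tau_m$ I would handle the two kinds of centered Hausdorff ball separately. The left-centered case is direct: for $\mathcal{B}_0\in\OLB(\mathcal{A}_0,u)$ interpolate $u\lll w\lll w'\lll \ftH a(\mathcal{A}_0,\mathcal{B}_0)$ and, using complete distributivity, pick for each $y'\in\mathcal{B}_0$ some $x_{y'}\in\mathcal{A}_0$ with $w\lll a(x_{y'},y')$. Then $W=\bigcup_{y'\in\mathcal{B}_0}\OLB(x_{y'},w)$ is open and, as a union of increasing sets, co-decreasing; $\mathcal{B}_0\subseteq W$, and for every $\mathcal{B}\in W^\Box$ each $y\in\mathcal{B}$ lies in some $\OLB(x_{y'},w)$, giving $w\le\bigvee_{x\in\mathcal{A}_0}a(x,y)$ and thus $u\lll w\le \ftH a(\mathcal{A}_0,\mathcal{B})$, i.e.\ $\mathcal{B}\in \OLB(\mathcal{A}_0,u)$.

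The right-centered case requires compactness together with a triangle estimate. Given $\mathcal{B}_0\in\ORB(\mathcal{A}_0,u)$, iterate interpolation to obtain $u\lll u^+\lll u'\lll u''\lll u'''\lll \ftH a(\mathcal{B}_0,\mathcal{A}_0)$; complete distributivity yields for each $x\in\mathcal{A}_0$ a point $y_x\in\mathcal{B}_0$ with $u''\lll a(y_x,x)$, i.e.\ $x\in\OLB(y_x,u'')$. Since $\mathcal{A}_0$ is closed in the compact space $\ftL_\V X$ it is compact, so there is a finite subcover $\mathcal{A}_0\subseteq\bigcup_{i=1}^{n}\OLB(z_i,u'')$ with each $z_i\in\mathcal{B}_0$. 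By Scott-continuity of $\otimes$ one has $u''=k\otimes u''=\bigvee_{s\lll k}s\otimes u''$, so applying complete distributivity to $u^+\lll u''$ produces $s\lll k$ with $u\lll u^+\le s\otimes u''$ and in particular $u\lll s\otimes u''$. Then $\bigcap_{i=1}^{n}\ORB(z_i,s)^\Diamond$ is a hit-and-miss basic open containing $\mathcal{B}_0$ (each $z_i\in\mathcal{B}_0\cap\ORB(z_i,s)$ because $s\lll k\le a(z_i,z_i)$); for any $\mathcal{B}$ in this neighbourhood and any $x\in\mathcal{A}_0$, pick $i$ with $x\in\OLB(z_i,u'')$ and $y\in\mathcal{B}\cap\ORB(z_i,s)$, so
\[
  s\otimes u''\le a(y,z_i)\otimes a(z_i,x)\le a(y,x),
\]
whence $s\otimes u''\le \ftH a(\mathcal{B},\mathcal{A}_0)$ and therefore $\mathcal{B}\in\ORB(\mathcal{A}_0,u)$.

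The main obstacle is precisely the $\ORB$ case: the infimum in $\ftH a(\mathcal{B},\mathcal{A}_0)=\bigwedge_x\bigvee_y a(y,x)$ blocks any pointwise test of $u\lll \ftH a(\mathcal{B},\mathcal{A}_0)$, so one must first pass to a finite subcover of $\mathcal{A}_0$ (hence the hypothesis $X\in\Cats{\V}_\ch$) and then control the genuinely multiplicative radius $s\otimes u''$ that the triangle inequality produces. The rescaling is done by combining Scott-continuity of $\otimes$ with iterated interpolation of $\lll$ in the completely distributive quantale $\V$, and this is precisely where Assumption~\ref{d:ass:1} on the directedness of $\tbdown k$ is used to guarantee that the required approximants in $\tbdown k$ exist in sufficient abundance.
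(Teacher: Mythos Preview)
Your argument is essentially correct, and the direction $\tau_H\subseteq\tau_m$ mirrors the paper's proof almost verbatim: the $\OLB$ case via the open set $\bigcup_{x\in\mathcal{A}_0}\OLB(x,w)^\Box$, and the $\ORB$ case via a finite cover of $\mathcal{A}_0$ by left balls followed by a triangle estimate against a tensor split $s\otimes u''$ with $s\lll k$. Where you differ is in also proving $\tau_m\subseteq\tau_H$ directly with Lemma~\ref{d:lem:5}. The paper does not do this: it observes that $\tau_m$ is compact (Proposition~\ref{d:prop:5}) and $\tau_H$ is Hausdorff (since $\ftH X$ is separated), so once $\tau_H\subseteq\tau_m$ is established the two topologies must coincide. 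Your route is more self-contained and shows explicitly how the hit-and-miss subbasic opens are $\V$-metric neighbourhoods; the paper's shortcut is shorter and explains why only one inclusion need ever be checked.

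One small point worth tightening: when you say that $W=\bigcup\OLB(x_{y'},w)$ is ``as a union of increasing sets, co-decreasing'', bear in mind that in the $\V$-categorical setting ``increasing'' and ``co-decreasing'' are not synonyms. What makes the step go through here is that $W^\complement$ is closed, so Lemma~\ref{d:lem:9} lets you pass from order-decreasing to $\V$-decreasing; the same remark applies to the sets $\ORB(z_i,s)$ in your $\ORB$ argument. The paper glosses over this as well, so it is not a gap so much as an implicit appeal you may want to make explicit.
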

\begin{proof}
 Let $(X,a)$ be a compact separated $\V$-category. We show that the topology
 induced by $\ftH a$ is contained in the hit-and-miss topology; then, since the former is
 Hausdorff and the latter is compact, both topologies coincide.

 Let $A\in\ftH X$ and $u\lll k$. For every $v\in\V$ with $u\lll v\lll k$, put
 \begin{equation*}
   U_v=\bigcup_{x\in A}\OLB(x,v).
 \end{equation*}
 We show that $\OLB(A,u)=\bigcup_{u\lll v\lll k}U_v^\Box$. To see this, let
 $B\in\OLB(A,u)$, hence, $u\lll \ftH a(A,B)$. Let $v\in\V$ with $u\lll v\lll
 \ftH a(A,B)$. Then, for every $y\in B$, exists $x\in A$ with $v\lll a(x,y)$, that
 is, $y\in\OLB(x,v)$. Therefore $B\subseteq U_v$, which is equivalent to $B\in
 U_V^\Box$. Let now $B\in U_v^\Box$, for some $u\lll v\lll k$. Then, for all $y\in
 B$, there is some $x\in A$ with $v\lll a(x,y)$; hence
 \begin{equation*}
   u\lll v\le \bigwedge_{y\in B}\bigvee_{x\in A}a(x,y)=\ftH a(A,B).
 \end{equation*}
 Let now $B\in\ORB(A,u)$, and take $u',v\in\V$ with $u\lll u'\lll v\lll
 \ftH a(B,A)$. For every $x\in A$, there exists $y\in B$ with $v\lll a(y,x)$, that
 is, $y\in B\cap \ORB(x,v)$. Take $w\lll k$ with $u'\lll v\otimes w$. By
 compactness, there exist $x_1,\dots,x_n\in A$ with
 \begin{equation*}
   A\subseteq \OLB(x_1,w)\cup\dots\cup \OLB(x_n,w).
 \end{equation*}
 Then $B\in \ORB(x_1,v)^\Diamond\cap\dots\cap\ORB(x_n,v)^\Diamond$; moreover,
 $\ORB(x_1,v)^\Diamond\cap\dots\cap\ORB(x_n,v)^\Diamond\subseteq \ORB(A,u)$. To
 see the latter, let
 $B'\in \ORB(x_1,v)^\Diamond\cap\dots\cap\ORB(x_n,v)^\Diamond$ and $x\in A$,
 then $x\in\OLB(x_i,w)$ for some $i\in\{1,\dots,n\}$. Let
 $y\in B'\cap\ORB(x_i,v)$, then
 \begin{equation*}
   u'\lll v\otimes w\le a(y,x_i)\otimes a(x_i,x)\le a(y,x),
 \end{equation*}
 which implies $u\lll u'\le \ftH a(B',A)$.
\end{proof}

\begin{theorem}
  The functor $\ftH \colon\Cats{\V}\to\Cats{\V}$ restricts to the category
  $\Cats{\V}_\ch$, moreover, the diagram
  \begin{center}
    \begin{tikzcd}[row sep=large, column sep=large,ampersand replacement=\&]
      \Cats{\V}_\ch \ar[r,"\ftH"]\ar[d,""'] \& \Cats{\V}_\ch \ar[d,""] \\
      \CatCHs{\V} \ar[r,"\ftH"']            \& \CatCHs{\V}
    \end{tikzcd}
  \end{center}
  commutes.
\end{theorem}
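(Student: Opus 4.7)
The plan is to verify two things: first, that for $(X,a)\in\Cats{\V}_\ch$ the $\V$-category $\ftH(X,a)$ again lies in $\Cats{\V}_\ch$; and second, that passing to the associated $\V$-categorical compact Hausdorff space before or after applying $\ftH$ yields the same object in $\CatCHs{\V}$. All the essential work has been done in the preceding results; what remains is to assemble it.

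For the restriction claim, let $(X,a)\in\Cats{\V}_\ch$, that is, $(X,a)$ is separated and $\ftL_\V(X,a)$ is compact. Separation of $\ftH(X,a)$ is immediate from Corollary~\ref{d:cor:2}. For compactness of $\ftL_\V\ftH(X,a)$, I would first pass through the induced $\V$-categorical compact Hausdorff space $(X,a,\alpha)$ using the functor $\Cats{\V}_\ch\to\CatCHs{\V}$ discussed just above Lemma~\ref{d:lem:10}. By Proposition~\ref{d:prop:6}, $\ftH(X,a,\alpha)$ is again a $\V$-categorical compact Hausdorff space, so $\ftH X$ carries the hit-and-miss topology which is compact Hausdorff by Proposition~\ref{d:prop:5}. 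Now apply Lemma~\ref{d:lem:10}: since $(X,a)$ is compact and separated, the hit-and-miss topology on $\ftH X$ coincides with the topology induced by $\ftH a$, namely $\ftL_\V\ftH(X,a)$. Hence $\ftL_\V\ftH(X,a)$ is compact, and $\ftH(X,a)\in\Cats{\V}_\ch$ as required.

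For the commutativity of the diagram on objects, an object $(X,a)\in\Cats{\V}_\ch$ is sent along the two paths as follows. Going right then down, we obtain $(\ftH X,\ftH a)$ equipped with the convergence of $\ftL_\V(\ftH X,\ftH a)$. Going down then right, we obtain $\ftH(X,a,\alpha)$, whose underlying $\V$-category is $(\ftH X,\ftH a)$ by construction of $\ftH$ on $\CatCHs{\V}$, and whose topology is the hit-and-miss topology. The two outputs share the same $\V$-category structure tautologically, and Lemma~\ref{d:lem:10} identifies the two topologies. For morphisms, a $\V$-functor $f\colon(X,a)\to(Y,b)$ in $\Cats{\V}_\ch$ is automatically continuous with respect to $\ftL_\V$ by functoriality of $\ftL_\V\colon\Cats{\V}\to\TOP$, while $\ftH f$ is continuous with respect to the hit-and-miss topology by Lemma~\ref{d:lem:7}; since both topologies on $\ftH X$ and $\ftH Y$ agree by Lemma~\ref{d:lem:10}, the two paths produce the same underlying function equipped with the same structure.

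The only potential obstacle is the topological identification on $\ftH X$, but this is precisely the content of Lemma~\ref{d:lem:10}, so in fact no further technical work is needed; the theorem is a clean corollary of the previous material.
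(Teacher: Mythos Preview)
Your approach is the intended one --- the paper states this theorem immediately after Lemma~\ref{d:lem:10} without proof, so the argument is meant to be the assembly you give. However, there is one genuine gap.

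You write that the underlying $\V$-category of $\ftH(X,a,\alpha)$ is $(\ftH X,\ftH a)$ ``by construction'', and that the two outputs ``share the same $\V$-category structure tautologically''. But this is not tautological: the functor $\ftH$ on $\Cats{\V}$ sends $(X,a)$ to the set of \emph{all} increasing subsets, whereas $\ftH$ on $\CatCHs{\V}$ sends $(X,a,\alpha)$ to the set of \emph{closed} increasing subsets (closed with respect to $\alpha$). For the diagram to commute even on underlying sets, you must verify that every increasing subset of $(X,a)$ is automatically closed in $\ftL_\V(X,a)$. This is exactly the content of Remark~\ref{d:rem:2}, which depends on Assumption~\ref{d:ass:2} (that $k\le u\otimes v$ implies $k\le u$ and $k\le v$); under this hypothesis, $\overline{A}\subseteq\upc A\cap\downc A$, so increasing sets are closed. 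Once you invoke this, the two carriers coincide, the Hausdorff structures agree trivially, and Lemma~\ref{d:lem:10} identifies the topologies as you say. Without this step the commutativity claim is unjustified, and indeed the theorem would fail in a quantale not satisfying Assumption~\ref{d:ass:2}.
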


\subsection{Coalgebras of Hausdorff polynomial functors on $\CatCHs{\V}$}

In this section we show that by ``adding topology'' we can improve the results
of Section~\ref{sec:coalg-hpf-V-Cat} about limits in categories of coalgebras
of Hausdorff polynomial functors. Throughout this section we still require
Assumptions~\ref{d:ass:4} and \ref{d:ass:1}.

We begin by showing that the category of coalgebras of the Hausdorff
functor on $\CatCHs{\V}$ is complete. The following result summarizes our
strategy.

\begin{theorem}
  \label{p:27}
  Let $\catX$ be a category that is complete, cocomplete and has an $(E,
  M)$-factorisation structure such that $\catX$ is $M$-wellpowered and $E$ is
  contained in the class of $\catX$-epimorphisms.  If a functor $\ftF \colon
  \catX \to \catX$ sends morphisms in $M$ to morphisms in $M$ and preserves
  codirected limits, then the category of coalgebras of $\ftF$ is complete.
\end{theorem}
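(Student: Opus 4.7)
My plan is to reduce completeness of $\CoAlg(\ftF)$ to the existence of equalisers and of small products. Equalisers are immediate from Theorem~\ref{p:19}: the hypotheses that $\catX$ has an $(E,M)$-factorisation with $E$ contained in the epimorphisms, that $\catX$ is $M$-wellpowered and cocomplete, and that $\ftF$ sends $M$-morphisms to $M$-morphisms, are precisely the assumptions made there. Hence only the construction of small products remains, and then small limits follow by combining the two.

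For products, let $((A_i,c_i))_{i\in I}$ be a small family in $\CoAlg(\ftF)$, let $P=\prod_i A_i$ in $\catX$ with projections $\pi_i$, and aim to realise the coalgebraic product as the ``largest'' $M$-subobject of $P$ carrying a compatible coalgebra structure. Concretely, I will build a transfinite chain of $M$-subobjects $m_\alpha\colon P_\alpha\to P$ by setting $P_0=P$ and, at each successor stage, forming the pullback
\[
  Q_\alpha=P_\alpha\times_{\prod_i \ftF A_i}\ftF P_\alpha,
\]
where the two maps into $\prod_i \ftF A_i$ are $\langle c_i\pi_i m_\alpha\rangle_i$ and $\langle \ftF(\pi_i m_\alpha)\rangle_i$; I then let $m_{\alpha+1}$ be the $M$-part of the $(E,M)$-factorisation of $Q_\alpha\to P_\alpha$, composed with $m_\alpha$. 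At a limit stage $\lambda$, I take $P_\lambda$ to be the codirected limit of $(P_\alpha)_{\alpha<\lambda}$ in $\catX$, with its evident inclusion into $P$ in $M$. Since $\catX$ is $M$-wellpowered, this chain must stabilise at some ordinal $\beta$: the successor step then becomes an isomorphism, which combined with the pullback square yields a coalgebra structure $c^*\colon P_\beta\to \ftF P_\beta$ making each $\pi_i m_\beta$ into a coalgebra morphism.

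The universal property is then verified by transfinite induction: given a cone $(f_i\colon (B,d)\to(A_i,c_i))_{i\in I}$ in $\CoAlg(\ftF)$, the induced morphism $B\to P$ in $\catX$ factors through every $P_\alpha$, because at each successor stage the coalgebra structure $d$ on $B$ supplies the lift into $Q_\alpha$ required by the pullback, which then pushes through the $E$-part of the factorisation; the resulting factorisation $B\to P_\beta$ is automatically a coalgebra morphism by construction of $c^*$. The main obstacle, and the sole place where the hypothesis that $\ftF$ preserves codirected limits is indispensable, is the limit step: one needs $\ftF P_\lambda$ to be the codirected limit of $(\ftF P_\alpha)_{\alpha<\lambda}$ so that the coherent system of candidate coalgebra maps on the $P_\alpha$ assembles into a single morphism $P_\lambda\to \ftF P_\lambda$, and so that the chain itself remains well-defined and contractive past each limit ordinal. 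With this ingredient in place, both stabilisation and the verification of the universal property go through without further difficulty.
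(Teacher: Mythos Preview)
Your reduction to equalisers and products is sound, and the use of Theorem~\ref{p:19} for equalisers matches the paper. However, the paper does \emph{not} construct products directly: it obtains a right adjoint to the forgetful functor $\CoAlg(\ftF)\to\catX$ (cofree coalgebras exist because $\ftF$ preserves codirected limits, by the results cited from \cite{BW85} and \cite{Ada05}), and then invokes the dual of Linton's theorem \cite{Lin69} to conclude that equalisers together with this right adjoint yield completeness.

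Your direct product construction has a genuine gap at the stabilisation step. When $P_{\beta+1}\to P_\beta$ becomes an isomorphism, the map $Q_\beta\to P_\beta$ (the first pullback projection followed by the factorisation) lies in $E$; but $E$ consists merely of epimorphisms, so you cannot descend the second projection $Q_\beta\to\ftF P_\beta$ along it to obtain $c^*\colon P_\beta\to\ftF P_\beta$. Concretely, nothing forces two elements of $Q_\beta$ with the same first coordinate to have the same second coordinate. Relatedly, your account of where preservation of codirected limits enters is confused: there are no ``candidate coalgebra maps $P_\alpha\to\ftF P_\alpha$'' along the way to assemble at a limit stage. A correct variant drops the $(E,M)$-factorisation entirely and sets $P_{\alpha+1}=Q_\alpha$; then at the first limit ordinal $\lambda$ one has $\ftF P_\lambda\cong\lim_{\alpha<\lambda}\ftF P_\alpha$, whence $P_{\lambda+1}\cong\lim_{\alpha<\lambda}P_{\alpha+1}\cong P_\lambda$ (pullbacks commute with codirected limits, and the shift is cofinal), and the second projection $P_{\lambda+1}\to\ftF P_\lambda$ now gives the coalgebra structure directly. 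In that version $M$-wellpoweredness plays no role in building products --- it is the codirected-limit hypothesis alone that forces stabilisation.
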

\begin{proof}
  The claim follows by combining Corollary~\ref{p:19}, \cite[Proposition 7 of
    Section 9.4]{BW85}, \cite[Remark~4.4]{Ada05} and \cite[Corollary~2]{Lin69}.
\end{proof}

Also, the theorem bellow will help us to replace  ``preserves codirected
limits'' with ``preserves codirected initial cones''.

\begin{theorem}[{\cite[Proposition 13.15]{AHS90}}]
  \label{p:31}
  Let $\ftII{-} \colon \catX \to \catA$ be a limit preserving faithful functor and
  $\ftD \colon \catI \to \catX$ a diagram. A cone $\mathcal{C}$ for $\ftD$ is a
  limit in $\catX$ if and only if the cone $\ftII{\mathcal{C}}$ is a limit of
  $\ftII{\ftD}$ in $\catA$ and $\mathcal{C}$ is initial with respect to
  $\ftII{-}$.
\end{theorem}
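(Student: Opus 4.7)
The statement has a clean ``two halves'' shape: one direction just combines limit-preservation of $\ftII{-}$ with the definition of initiality, and the other uses the initial lift of a mediating $\catA$-morphism to build the $\catX$-mediator. My plan is to prove both implications directly from the definitions, leaning on faithfulness of $\ftII{-}$ to transport equalities back and forth between $\catX$ and $\catA$.

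For the forward implication, assume $\mathcal{C}=(f_i \colon X\to \ftD i)_{i\in\catI}$ is a limit cone in $\catX$. Then $\ftII{\mathcal{C}}$ is a limit of $\ftII{\ftD}$ in $\catA$ simply because $\ftII{-}$ preserves limits. To see that $\mathcal{C}$ is $\ftII{-}$-initial, I would take an $\catX$-object $Y$ and an $\catA$-morphism $g \colon \ftII{Y}\to \ftII{X}$ such that each composite $\ftII{f_i}\cdot g$ is of the form $\ftII{h_i}$ for some $h_i \colon Y\to \ftD i$ in $\catX$. The family $(h_i)_{i\in\catI}$ is then a cone over $\ftD$ in $\catX$: compatibility under each $\alpha \colon i\to j$ follows by applying $\ftII{-}$, using that $(f_i)$ is already a cone, and then invoking faithfulness to descend the equality back to $\catX$. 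The universal property of $\mathcal{C}$ yields a unique $h \colon Y\to X$ with $f_i\cdot h=h_i$, and since $\ftII{\mathcal{C}}$ is a mono-source in $\catA$, the identity $\ftII{f_i}\cdot \ftII{h} = \ftII{f_i}\cdot g$ forces $\ftII{h}=g$, which is precisely what initiality requires.

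For the reverse implication, suppose $\ftII{\mathcal{C}}$ is a limit in $\catA$ and $\mathcal{C}$ is $\ftII{-}$-initial. Given any cone $(g_i \colon Y\to \ftD i)_{i\in\catI}$ in $\catX$, applying $\ftII{-}$ produces a cone in $\catA$, so the limit property in $\catA$ provides a unique mediating morphism $g \colon \ftII{Y}\to \ftII{X}$ with $\ftII{f_i}\cdot g = \ftII{g_i}$. Initiality of $\mathcal{C}$ now lifts $g$ to a morphism $h \colon Y\to X$ in $\catX$ with $\ftII{h}=g$; faithfulness then upgrades the $\catA$-equalities $\ftII{f_i}\cdot \ftII{h}=\ftII{g_i}$ to $f_i\cdot h = g_i$, so $h$ is a cone morphism. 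Uniqueness of $h$ in $\catX$ comes for free: any competitor $h'$ gives $\ftII{h'}=\ftII{h}$ by the mono-source property of $\ftII{\mathcal{C}}$, and then $h=h'$ by faithfulness.

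Most steps are mechanical; the only place one has to be careful is in checking that the family $(h_i)$ in the forward direction is indeed a cone, since the compatibility conditions are available a priori only after applying $\ftII{-}$. This is where faithfulness does genuine work, and it is the single point I would verify explicitly before packaging the rest as routine diagram-chasing.
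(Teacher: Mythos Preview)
Your argument is correct and is essentially the standard proof of this fact; note that the paper itself does not give a proof but simply cites \cite[Proposition~13.15]{AHS90}, where the same direct verification via the definitions of initial source and faithfulness is carried out. There is nothing to compare beyond observing that your write-up matches the textbook approach.
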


\begin{proposition}
  \label{p:33}
  The Hausdorff functor on $\CatCHs{\V}$ preserves codirected initial cones with
  respect to the forgetful functor $\CatCHs{\V} \to \COMPHAUS$.
\end{proposition}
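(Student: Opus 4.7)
The plan is to reduce the proposition to a concrete identity of Hausdorff structures and then prove that identity by a finite-intersection argument driven by compactness and complete distributivity. Let $(\pi_i \colon X \to X_i)_{i \in I}$ be a codirected initial cone in $\CatCHs{\V}$ with respect to the forgetful functor to $\COMPHAUS$. Since that functor is faithful, initiality of the cone encodes the identity $a(x,y) = \bigwedge_i a_i(\pi_i(x), \pi_i(y))$ on the $\V$-category structures, and the corresponding initiality of $(\ftH \pi_i)$ amounts to the formula
\[
  \ftH a(A,B) = \bigwedge_i \ftH a_i(\ftH\pi_i(A), \ftH\pi_i(B)) \qquad (A, B \in \ftH X).
\]
The inequality $\le$ is immediate, since each $\ftH\pi_i$ is a $\V$-functor by Lemma~\ref{d:lem:7}, so only the opposite inequality requires work.

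For that inequality, fix $y \in B$. Since $A$ is closed in the compact space $X$, $\pi_i(A)$ is compact, and Lemma~\ref{d:lem:9} gives $\upc \pi_i(A) = \upc^\le \pi_i(A)$; combining this with the triangle inequality $a_i(z, \pi_i(y)) \ge a_i(z, x') \otimes a_i(x', \pi_i(y))$ when $z \le x'$ shows
\[
  \bigvee_{x' \in \upc \pi_i(A)} a_i(x', \pi_i(y)) = \bigvee_{x \in A} a_i(\pi_i(x), \pi_i(y)).
\]
Consequently it suffices to prove the sup-inf exchange
\[
  \bigwedge_i \bigvee_{x \in A} a_i(\pi_i(x), \pi_i(y)) \le \bigvee_{x \in A} \bigwedge_i a_i(\pi_i(x), \pi_i(y)) = \bigvee_{x \in A} a(x,y),
\]
which is the main obstacle of the proof and where compactness, codirectedness and complete distributivity must all be combined.

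To close the argument, denote the left-hand side above by $u$ and fix $v \lll u$; by complete distributivity of $\V$ it suffices to find a single $x \in A$ with $v \le a_i(\pi_i(x), \pi_i(y))$ for every $i$ simultaneously. Consider the sets $D_i = \{x \in A \mid v \le a_i(\pi_i(x), \pi_i(y))\}$. Each $D_i$ is closed in $X$, since $\upc v$ is closed in $(\V, \xi_\le)$ by Remark~\ref{d:rem:6} and the map $x \mapsto a_i(\pi_i(x), \pi_i(y))$ is continuous of type $(X, \alpha) \to (\V, \xi_\le)$ by Proposition~\ref{d:prop:4}; each $D_i$ is non-empty because $v \lll u \le \bigvee_{x \in A} a_i(\pi_i(x), \pi_i(y))$, so complete distributivity supplies a witness. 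Since the connecting morphisms of the codirected diagram are $\V$-functors, $D_j \subseteq D_i$ whenever there is an arrow $j \to i$ in the index category, so the family $\{D_i\}_{i \in I}$ has the finite intersection property, and compactness of $A$ yields some $x \in \bigcap_i D_i$. Then $v \le \bigwedge_i a_i(\pi_i(x), \pi_i(y)) = a(x,y) \le \bigvee_{x' \in A} a(x',y)$; letting $v$ range over elements $\lll u$ and then $y$ over $B$ finishes the proof.
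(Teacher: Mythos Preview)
Your proof is correct and follows essentially the same route as the paper: reduce to the inequality $\bigwedge_i \ftH a_i(\ftH\pi_i(A),\ftH\pi_i(B)) \le \ftH a(A,B)$, fix $y\in B$ and $v\lll u$, form the closed non-empty sets $D_i$, and use codirectedness plus compactness to extract a common witness. One small simplification: your detour through Lemma~\ref{d:lem:9} and compactness of $\pi_i(A)$ to identify $\bigvee_{x'\in\upc\pi_i(A)}a_i(x',\pi_i(y))$ with $\bigvee_{x\in A}a_i(\pi_i(x),\pi_i(y))$ is unnecessary, since Lemma~\ref{d:lem:6} already gives $\ftH a_i(\upc\pi_i(A),\upc\pi_i(B))=\ftH a_i(\pi_i(A),\pi_i(B))$ directly, which is how the paper handles this step.
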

\begin{proof}
  Let $(f_i \colon (X,a,\alpha) \to (X_i,a_i,\alpha_i))_{i \in \catI}$ be a codirected initial cone with
  respect to the functor $\CatCHs{\V} \to \COMPHAUS$.
  We will show that for every $A,B \subseteq X$ the inequality
  \[
    u = \bigwedge_{i \in \catI} \ftH a_i(\ftH f_i(A), \ftH f_i(B)) \leq \ftH a(A,B)
  \]
  holds. Note that since $\V$ is (ccd) it is sufficient to prove that $v \leq
  \ftH a(A,B)$ for every $v \lll u$.

  Let $b \in B$ and fix $v \in \V$ such that $v \lll u$. Then, for every $i \in
  \catI$,
  \[
    u \leq \ftH a_i (f_i(A), f_i(B)) \leq \bigvee_{x \in A} a_i(f_i(x), f_i(b)),
  \]
  since $\ftH a_i(\ftH f_i(A), \ftH f_i(B)) = \ftH a_i (f_i(A), f_i(B))$ by
  lemma~\ref{d:lem:6}. Hence, for every $i \in \catI$, there exists an element
  $x_i \in A$ such that $v \leq a_i(f_i(x_i), f_i(b))$. Thus, for every $i \in
  \catI$, the set
  \[
    A_i = A \cap \{ x \in X \mid v \leq a_i(f_i(x), f_i(b)) \}
  \]
  is non-empty and closed because $\upc v \subseteq \V$ is closed
  (see Remark~\ref{d:rem:6}) and $a \colon (X,\alpha) \to (\V,
  \xi_\leq)$ is continuous (see Proposition~\ref{d:prop:4}). This way we obtain
  a codirected family of closed subsets of $X$ that has the finite intersection
  property since the cone $(f_i)_{i \in \catI}$ is codirected.
  Consequently, by compactness of $X$, there exists $x_b \in \bigcap_{i \in
  \catI} A_i$ such that for every $i \in \catI$, $v \leq a_i(f_i(x_b), f_i(b))$.
  Therefore, $v \leq a(x_b, b)$ since the cone $(f_i)_{i \in \catI}$ is initial,
  which implies $v \leq \ftH a(A,B)$.
\end{proof}

\begin{corollary}
  \label{p:26}
  The Hausdorff functor on $\CatCHs{\V}$ preserves initial monomorphisms with
  respect to the forgetful functor $\CatCHs{\V} \to \COMPHAUS$.
\end{corollary}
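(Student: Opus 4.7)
The plan is to decompose the statement into the two constituent conditions: $\ftH f$ should be $\ftU$-initial and a monomorphism whenever $f \colon X \to Y$ is, where $\ftU \colon \CatCHs{\V} \to \COMPHAUS$ denotes the forgetful functor. I would handle initiality by invoking Proposition~\ref{p:33} and handle injectivity directly.

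For initiality, observe that a single morphism is just a cone over the one-object indexing category, which is trivially codirected; hence Proposition~\ref{p:33} applies and yields that $\ftH f \colon \ftH X \to \ftH Y$ is $\ftU$-initial. In particular, initiality of $f$ combined with its injectivity means that the $\V$-structure on $X$ is the pointwise pullback, so that
\[
  a_X(x, x') = a_Y(f(x), f(x'))
\]
for all $x, x' \in X$.

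For monicity, since monomorphisms in $\CatCHs{\V}$ are precisely those morphisms whose underlying compact Hausdorff maps are injective, it suffices to show that $\ftH f$ is injective on underlying sets. The key is the identity
\[
  \upc f(A) \cap f(X) = f(A)
\]
for every $A \in \ftH X$, where $\upc$ is computed in $Y$. The inclusion $\supseteq$ is immediate; for $\subseteq$, if $f(x) \in \upc f(A)$ then some $a \in A$ satisfies $k \le a_Y(f(a), f(x)) = a_X(a, x)$, and since $A$ is increasing we conclude $x \in A$. Now, if $\ftH f(A) = \ftH f(B)$, i.e., $\upc f(A) = \upc f(B)$, intersecting with $f(X)$ gives $f(A) = f(B)$, and injectivity of $f$ yields $A = B$.

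The work is light: the only delicate point is the small calculation identifying $\upc f(A) \cap f(X)$ with $f(A)$, which requires both that $A$ is increasing and that initiality of $f$ in $\CatCHs{\V}$ with respect to $\COMPHAUS$ translates the inequality $k \le a_Y(f(a), f(x))$ back to $k \le a_X(a, x)$. No further obstacle arises.
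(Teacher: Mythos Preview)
Your argument is correct, with one small imprecision. When you write ``some $a \in A$ satisfies $k \le a_Y(f(a), f(x))$'', this does not follow from $f(x) \in \upc f(A)$ in a general quantale: by definition one only has $k \le \bigvee_{a \in A} a_Y(f(a), f(x))$. The fix is immediate, however: by initiality of $f$ this supremum equals $\bigvee_{a \in A} a_X(a, x)$, so $x \in \upc^{a_X} A = A$ since $A$ is increasing. (Alternatively, in the present setting Lemma~\ref{d:lem:9} does guarantee that $\upc f(A) = \upc^\le f(A)$, so a witnessing $a$ really exists; but you should cite it if you want to argue that way.)

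Your route to injectivity differs from the one the paper implicitly intends. Following the pattern of Proposition~\ref{p:20}, once Proposition~\ref{p:33} gives that $\ftH f$ is initial, one observes that $\ftH X$ is separated (Corollary~\ref{d:cor:2}); an initial morphism out of a separated $\V$-category is automatically injective, since $\ftH f(A) = \ftH f(B)$ forces $k \le \ftH a(A,B)$ and $k \le \ftH a(B,A)$, hence $A = B$. This is slightly slicker and avoids the explicit set-theoretic computation $\upc f(A) \cap f(X) = f(A)$. Your approach, on the other hand, is more self-contained and makes the mechanism transparent without appealing to separatedness.
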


\begin{theorem}\label{d:thm:5}
  The functor $\ftH \colon\CatCHs{\V}\to\CatCHs{\V}$ preserves codirected
  limits.
\end{theorem}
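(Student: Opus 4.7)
The plan is to apply Theorem~\ref{p:31} to the forgetful functor $\ftII{-} \colon \CatCHs{\V} \to \COMPHAUS$, which, as noted in Remark~\ref{p:7}, is topological and therefore preserves all limits. Given a codirected diagram $D \colon \catI \to \CatCHs{\V}$ with limit cone $(p_i \colon X \to X_i)_{i \in \catI}$, Theorem~\ref{p:31} reduces the problem to establishing two properties of the cone $(\ftH p_i \colon \ftH X \to \ftH X_i)_{i \in \catI}$: first, that it is initial with respect to $\ftII{-}$, and second, that its image in $\COMPHAUS$ is a limit cone.

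The first property is essentially handed to us by Proposition~\ref{p:33}. Since $\CatCHs{\V} \to \COMPHAUS$ is topological, the limit cone $(p_i)_{i \in \catI}$ is itself initial with respect to $\ftII{-}$, and Proposition~\ref{p:33} says precisely that $\ftH$ preserves codirected initial cones; hence $(\ftH p_i)_{i \in \catI}$ is initial.

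For the second property I would route the argument through $\ORDCH$ using the commutative squares of Proposition~\ref{p:6}. First I would verify that the canonical forgetful functor $\CatCHs{\V} \to \ORDCH$ preserves codirected limits: the $\V$-category structure of the projective limit is given by $a(x, y) = \bigwedge_i a_i(p_i(x), p_i(y))$, so $k \le a(x,y)$ is equivalent to $k \le a_i(p_i(x), p_i(y))$ holding for every $i$, which means the underlying order of the limit in $\CatCHs{\V}$ coincides with the projective order computed in $\ORDCH$. Combining this observation with Proposition~\ref{p:6} and the classical fact that Nachbin's Vietoris functor on $\ORDCH$ preserves codirected limits, one concludes that the image of $(\ftH p_i)_{i \in \catI}$ in $\ORDCH$ is a limit cone; since the forgetful functor $\ORDCH \to \COMPHAUS$ preserves limits, the same holds in $\COMPHAUS$, closing the argument via Theorem~\ref{p:31}.

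The main obstacle I anticipate is justifying the classical input on Nachbin's Vietoris functor preserving codirected limits in $\ORDCH$, which is the only step not already established in the excerpt. A more self-contained alternative would work directly in $\COMPHAUS$: one could combine the classical preservation of codirected limits by the Vietoris functor on $\COMPHAUS$ with Lemma~\ref{d:lem:9}, which identifies the $\V$-categorical and the order-theoretic $\upc$ on closed subsets, to argue that the subspace of closed increasing subsets behaves well under codirected projective limits.
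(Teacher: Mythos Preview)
Your proposal is correct and matches the paper's approach: both apply Theorem~\ref{p:31} with the forgetful functor to $\COMPHAUS$, obtain initiality from Proposition~\ref{p:33}, and reduce the $\COMPHAUS$-limit condition to the $\ORDCH$ case via the commutative square of Proposition~\ref{p:6}. The external input you flag---preservation of codirected limits by $\ftH$ on $\ORDCH$---is precisely what the paper's terse proof is relying on (implicitly from \cite{HNN19}), and your explicit verification that $\CatCHs{\V}\to\ORDCH$ preserves codirected limits fills a step the paper leaves to the reader.
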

\begin{proof}
  From Proposition~\ref{p:6}, the diagram below commutes.
  \begin{center}
    \begin{tikzcd}[row sep=large, column sep=large,ampersand replacement=\&]
      \CatCHs{\V} \ar[r,"\ftH"]\ar[d,""'] \& \CatCHs{\V} \ar[d,""] \ar[r,""] \& \COMPHAUS \\
      \ORDCH \ar[r,"\ftH"'] \& \ORDCH \ar[ur,""]
    \end{tikzcd}
  \end{center}
  Therefore, taking into account Theorem~\ref{p:31}, the claim follows
  from Proposition~\ref{p:33}.
\end{proof}

\begin{corollary}
  For $\ftH \colon\CatCHs{\V} \to\CatCHs{\V}$, the forgetful functor
  $\CoAlg(\ftH)\to\CatCHs{\V}$ is comonadic.
\end{corollary}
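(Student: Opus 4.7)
The plan is to apply Beck's comonadicity criterion to the forgetful functor $U \colon \CoAlg(\ftH) \to \CatCHs{\V}$. Two of the three required conditions come essentially for free in the endofunctor--coalgebra setting: $U$ reflects isomorphisms, since a coalgebra homomorphism is invertible as soon as its underlying morphism is; and every $U$-split coreflexive pair in $\CoAlg(\ftH)$ has an absolute equaliser in $\CatCHs{\V}$, which is preserved by $\ftH$ (being absolute) and therefore lifts uniquely to an equaliser in $\CoAlg(\ftH)$ that $U$ preserves. The only substantial task is to produce a right adjoint to $U$, that is, a cofree $\ftH$-coalgebra on each object of $\CatCHs{\V}$.

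This is where Theorem~\ref{d:thm:5} does the heavy lifting. Given $X$ in $\CatCHs{\V}$, I would form the $\omega^{\op}$-chain $(W_n, p_n \colon W_{n+1} \to W_n)_{n \in \N}$ with $W_0 = X$, $W_{n+1} = X \times \ftH W_n$, $p_0$ the projection onto the first factor, and $p_{n+1} = 1_X \times \ftH p_n$ for $n \ge 0$. Because $\CatCHs{\V}$ is complete (as the Eilenberg--Moore category of the ultrafilter-based monad on $\Cats{\V}$), this chain has a limit $W_\infty$. By Theorem~\ref{d:thm:5}, $\ftH$ preserves this codirected limit, so the limit cone $(W_\infty \to W_{n+1} \to \ftH W_n)_n$ factors through a canonical morphism $\gamma \colon W_\infty \to \ftH W_\infty$, making $W_\infty$ an $\ftH$-coalgebra, while the projection $\epsilon \colon W_\infty \to W_0 = X$ exhibits $(W_\infty, \gamma)$ together with $\epsilon$ as the cofree coalgebra on $X$ by the standard Adámek-type universality argument.

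With the right adjoint in hand, all of Beck's conditions are satisfied and $U$ is comonadic. The genuinely non-trivial step --- preservation of the relevant codirected limit by $\ftH$ --- has already been accomplished in Theorem~\ref{d:thm:5}, so the remaining verifications are routine; the main obstacle, if any, would have been establishing this preservation, but it is already in place.
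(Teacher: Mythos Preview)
Your proposal is correct and follows essentially the same route the paper has in mind: the corollary is stated without proof precisely because it is meant to follow from Theorem~\ref{d:thm:5} via the standard fact that, for an endofunctor on a complete category preserving codirected limits, the forgetful functor from its coalgebras is comonadic (Beck's criterion together with the Ad\'amek-type cofree construction, as you spell out). The only cosmetic difference is that you start the cofree chain at $W_0=X$ rather than at the terminal object; this still yields the terminal $X\times\ftH(-)$-coalgebra because every $G$-coalgebra $(D,\langle f,d\rangle)$ supplies the needed map $D\to W_0$, so the usual cone-compatibility and uniqueness arguments go through unchanged.
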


\begin{corollary}
  The category of coalgebras of the Hausdorff functor $\ftH \colon \CatCHs{\V}
    \to \CatCHs{\V}$ is complete. Moreover, the functor $\CoAlg{\ftH} \to
    \CatCHs{\V}$ preserves codirected limits.
\end{corollary}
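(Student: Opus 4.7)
The plan is to deduce this statement directly from Theorem~\ref{p:27}, with the two key ingredients already in place: Corollary~\ref{p:26} supplying the preservation of $M$-morphisms, and Theorem~\ref{d:thm:5} supplying the preservation of codirected limits.

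First I would fix the data required by Theorem~\ref{p:27}: take $\catX=\CatCHs{\V}$ with the $(E,M)$-factorisation system consisting of surjective morphisms and initial monomorphisms with respect to the forgetful functor $\CatCHs{\V}\to\COMPHAUS$. This category is complete and cocomplete, being the Eilenberg--Moore category of a lifted monad on $\Cats{\V}$, which is in turn topological over $\SET$. The forgetful functor $\CatCHs{\V}\to\COMPHAUS$ is topological (see Remark~\ref{p:7}), so the desired factorisation system on $\CatCHs{\V}$ is inherited from the (surjective, embedding)-factorisation on $\COMPHAUS$; this also gives $M$-wellpoweredness, and since a surjective continuous map between compact Hausdorff spaces is an epimorphism, faithfulness of the forgetful functor propagates this to $\catX$.

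Next I would invoke the two structural results just established: Corollary~\ref{p:26} states that $\ftH$ preserves initial monomorphisms, which is exactly $\ftH(M)\subseteq M$; and Theorem~\ref{d:thm:5} states that $\ftH$ preserves codirected limits. Plugging into Theorem~\ref{p:27}, we conclude that $\CoAlg(\ftH)$ is complete.

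For the second assertion, the preservation of codirected limits by the forgetful functor $\CoAlg(\ftH)\to\CatCHs{\V}$ is a standard consequence of the fact that $\ftH$ itself preserves such limits. Given a codirected diagram $\ftD\colon\catI\to\CoAlg(\ftH)$, its composition with the forgetful functor has a limit $X$ in $\CatCHs{\V}$; since $\ftH$ preserves this limit, the cocone of the coalgebra structure maps factors uniquely through $X\to\ftH X$, yielding the limit of $\ftD$ in $\CoAlg(\ftH)$ with underlying object $X$. The main obstacle is purely bookkeeping---verifying the factorisation-theoretic hypotheses on $\CatCHs{\V}$---since the substantive content (preservation of initial monomorphisms and of codirected limits by $\ftH$) has already been secured.
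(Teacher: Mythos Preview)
Your proposal is correct and follows essentially the same route as the paper: you verify that $\CatCHs{\V}$, being topological over $\COMPHAUS$, carries the (surjective, initial mono)-factorisation structure needed for Theorem~\ref{p:27}, and then feed in Corollary~\ref{p:26} and Theorem~\ref{d:thm:5} exactly as the paper does. Your explicit treatment of the second assertion (preservation of codirected limits by the forgetful functor) via the standard creation argument is a reasonable unpacking of what the paper leaves implicit in its appeal to Theorem~\ref{p:27}.
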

\begin{proof}
  Being a topological category over $\COMPHAUS$, the category $\CatCHs{\V}$ is
  (surjective, initial mono)-structured. Therefore, the category $\CatCHs{\V}$
  satisfies all conditions necessary to apply Theorem~\ref{p:27}. Furthermore,
  the previous results show that $\ftH$ also satisfies the necessary requirements
  to apply Theorem~\ref{p:27}.
\end{proof}

In the sequel we describe the terminal coalgebra of the Hausdorff functor on
$\CatCHs{\V}$; which is the limit of the codirected diagram
\begin{equation}
  \label{d:eq:2}
  1\longleftarrow \ftH 1\longleftarrow \ftH \ftH 1\longleftarrow \dots,
\end{equation}
where the morphisms are obtained by applying successively $\ftH$ to the unique
morphism $f_! \colon \ftH 1 \to 1$.

First, we analyse the case of $\V=2$. To do so, let $(X,\tau^d)$
denote the discrete space with underlying set $X$, and observe that for every
positive integer $n$,

\begin{equation*}
  \ftH(n, \geq, \tau^d) = (n+1, \geq, \tau^d)
  \qquad\text{and}\qquad
  \ftH^nf_!(k)          = \min(k,n).
\end{equation*}

\begin{lemma}
  \label{p:15}
  Consider the one-point compactification $(\N+\infty, \tau^*)$ of the space
  $(\N,\tau^d)$.  The cone
  \begin{equation}
    \label{d:eq:3}
    (\min(-,n) \colon (\N+\infty,\geq, \tau^*) \longrightarrow (n+1, \geq,
    \tau^d))_{n \in \N}
  \end{equation}
  is a limit in $\ORDCH$ of the diagram \eqref{d:eq:2}.
\end{lemma}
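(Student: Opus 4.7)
The plan is to verify in sequence that the cone $(\min(-,n))_{n\in\N}$ of the statement lives in $\ORDCH$ over the diagram \eqref{d:eq:2}, and then that any other cone factors through it uniquely. Cone validity breaks into three routine checks: monotonicity of $\min(-,n)\colon(\N+\infty,\geq)\to(n+1,\geq)$, which is immediate from $a\geq b\Rightarrow\min(a,n)\geq\min(b,n)$; continuity from $\tau^*$ to $\tau^d$, for which the only preimage that is not a singleton of $\N$ is $\min(-,n)^{-1}(\{n\})=\{n,n+1,\dots,\infty\}$, a basic open neighbourhood of $\infty$; and the cone equation, which is the trivial identity $\min(\min(m,k+1),k)=\min(m,k)$.

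For universality, I would start from an arbitrary cone $(g_k\colon Y\to(k+1,\geq,\tau^d))_{k\in\N}$ in $\ORDCH$ and exploit the cone relation $\min(g_{k+1}(y),k)=g_k(y)$ to conclude, for each fixed $y$, that $(g_k(y))_k$ is a non-decreasing sequence in $\N$ with $g_0(y)=0$, with successive increments in $\{0,1\}$, and constant from the first index $k$ at which $g_k(y)<k$ onwards. The candidate factorisation is then the pointwise supremum $h(y):=\sup_k g_k(y)\in\N+\infty$, and an elementary case split shows $\min(h(y),k)=g_k(y)$ for every $k$. This identity both forces uniqueness of $h$ as a set map and realises the required factorisation.

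It remains to show $h$ is a morphism in $\ORDCH$. Monotonicity with respect to $\geq$ is inherited pointwise from the $g_k$: if $y_1\leq y_2$ in $Y$, then $g_k(y_1)\geq g_k(y_2)$ in $\N$ for every $k$, hence $h(y_1)\geq h(y_2)$ in $\N+\infty$. For continuity, I would use that $\tau^*$ is generated by the isolated singletons $\{k\}$ $(k\in\N)$ together with the tail neighbourhoods $U_k=\{k,k+1,\dots,\infty\}$ of $\infty$, and then compute $h^{-1}(\{k\})=g_{k+1}^{-1}(\{k\})$ and $h^{-1}(U_k)=g_k^{-1}(\{k\})$; both are open in $Y$ by continuity of the corresponding $g_j$. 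The only genuine pitfall I anticipate is bookkeeping the order convention: under $\geq$ the point $\infty$ sits at the \emph{bottom} of $\N+\infty$, consistent with being the join of the growing natural numbers in this order, and one must match this against the behaviour of $h$ as a pointwise supremum in $\N$; once this is internalised, all verifications reduce to inspection of the subbasic opens of the one-point compactification.
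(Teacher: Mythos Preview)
Your proof is correct and complete, but it takes a different route from the paper's. The paper invokes the ``Bourbaki criterion'' from \cite[Theorem~3.29]{HNN19}: for a codirected diagram in $\ORDCH$, a cone of \emph{surjective} morphisms is a limit as soon as it is point-separating and initial with respect to the forgetful functor $\ORDCH\to\COMPHAUS$. The paper then simply records that each $\min(-,n)$ is surjective, monotone and continuous, and that the family separates points and induces the order on $\N+\infty$.

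Your approach, by contrast, verifies the universal property directly: you construct the mediating morphism $h(y)=\sup_k g_k(y)$ explicitly, exploit the increment-by-$0$-or-$1$ structure of the sequence $(g_k(y))_k$ to check the factorisation identity $\min(h(y),k)=g_k(y)$, and then read off continuity from the preimage formulas $h^{-1}(\{k\})=g_{k+1}^{-1}(\{k\})$ and $h^{-1}(U_k)=g_k^{-1}(\{k\})$. This is more elementary and self-contained---no external limit criterion is needed---at the cost of a slightly longer computation. The paper's route is shorter but presupposes the Bourbaki-type result; yours would be preferable in an exposition that does not want to import that machinery.
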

\begin{proof}
  The assertion follows immediately from the ``Bourbaki'' criterion described in
  \cite[Theorem~3.29]{HNN19}: firstly, for every $n\in\N$, the map
  $\min(-,n) \colon (\N+\infty,\geq, \tau^*) \to (n+1, \geq, \tau^d)$ is
  surjective, monotone and continuous; secondly, the cone \eqref{d:eq:3} is
  point-separating and initial with respect to the canonical forgetful functor
  $\ORDCH\to\COMPHAUS$.
\end{proof}

\begin{theorem}
  \label{p:16}
  The map $f \colon (\N+\infty, \geq, \tau^*) \to \ftH(\N+\infty, \geq, \tau^*)$
  defined by
  \begin{equation*}
  f(n) = \begin{cases}
           \varnothing, & n = 0      \\
           \N+\infty,   & n = \infty \\
           \upc (n-1),  & \text{otherwise,}
         \end{cases}
  \end{equation*}
  is a terminal coalgebra for $\ftH \colon \ORDCH \to \ORDCH$.
\end{theorem}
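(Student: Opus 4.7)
The plan combines Theorem~\ref{d:thm:5} with a classical Adámek-style argument. First I would specialise Theorem~\ref{d:thm:5} to the quantale $\V = 2$ (for which Assumptions~\ref{d:ass:4} and~\ref{d:ass:1} both hold trivially, since $\tbdown 1 = \{0,1\}$) to conclude that $\ftH \colon \ORDCH \to \ORDCH$ preserves codirected limits. Applying $\ftH$ to the limit cone $\pi_n = \min(-, n) \colon (\N+\infty, \geq, \tau^*) \to \ftH^n 1$ provided by Lemma~\ref{p:15} then yields another limit cone $(\ftH \pi_n)_{n \in \N}$, this time for the shifted chain $(\ftH^{n+1} 1)_{n \in \N}$. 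Since the shifted chain has the same limit as the original one via the cone $(\pi_{n+1})_{n \in \N}$, there is a unique morphism $\xi \colon (\N+\infty, \geq, \tau^*) \to \ftH(\N+\infty, \geq, \tau^*)$ in $\ORDCH$ satisfying $\ftH \pi_n \cdot \xi = \pi_{n+1}$ for every $n \in \N$. This $\xi$ is an isomorphism, and Lambek's lemma identifies it as the structure map of a terminal coalgebra on $(\N+\infty, \geq, \tau^*)$.

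It remains to identify $\xi$ with the map $f$ given in the statement. Since the forgetful functor $\ORDCH \to \SET$ preserves limits and is faithful, the cone $(\ftH \pi_n)_{n \in \N}$ is jointly injective on underlying sets; hence it suffices to verify $\ftH \pi_n \cdot f = \pi_{n+1}$ at the level of set functions, which will simultaneously show $f = \xi$ and thus that $f$ lifts to the morphism $\xi$ in $\ORDCH$. Under the identification $\ftH^{n+1} 1 \cong \{0,1,\dots,n+1\}$ sending $\{0,\dots,k-1\} \leftrightarrow k$ and $\varnothing \leftrightarrow 0$, and using $\ftH \pi_n(A) = \upc \min(A, n)$, the required equality reduces to a short case analysis on $m \in \N+\infty$: for $m = 0$, $\upc \min(\varnothing, n) = \varnothing$ matches $\pi_{n+1}(0) = 0$; for $m = \infty$, $\upc \{0,\dots,n\} = \{0,\dots,n\}$ matches $\pi_{n+1}(\infty) = n+1$; and for $1 \leq m < \infty$, $\upc \{0,\dots,\min(m-1,n)\} = \{0,\dots,\min(m,n+1)-1\}$ matches $\pi_{n+1}(m) = \min(m,n+1)$.

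The key mathematical input is Theorem~\ref{d:thm:5}, which does all the heavy lifting via the standard Adámek machinery; everything else is essentially bookkeeping. The only real obstacle I anticipate is keeping the three different order conventions straight — the reversed order $\geq$ on $\N+\infty$, the containment order $\supseteq$ on $\ftH(\N+\infty, \geq, \tau^*)$, and the identification of $\ftH^n 1$ with $\{0, 1, \dots, n\}$ — but no conceptual difficulty arises once the preservation of codirected limits is in hand.
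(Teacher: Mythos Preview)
Your proposal is correct and follows the same approach as the paper: invoke preservation of codirected limits by $\ftH$ (Theorem~\ref{d:thm:5} specialised to $\V=\two$), apply the Ad\'amek construction to the limit of Lemma~\ref{p:15}, and then identify the resulting structure map with $f$ by the case analysis you outline. The paper's proof simply calls this last step ``routine calculation'', so your write-up is a more detailed version of exactly what the authors intend.
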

\begin{proof}
  Since $\ftH \colon \ORDCH \to \ORDCH$ preserves codirected limits we can
  compute its terminal coalgebra from the limit of the diagram of
  Lemma~\ref{p:15}.  Therefore, the assertion holds by routine calculation.
\end{proof}

\begin{remark}
  The set $\N$ is an upset in $(\N+\infty, \geq, \tau^*)$ but it is not compact.
\end{remark}

As a consequence of the theorem above we can describe the terminal coalgebra of
the lower Vietoris functor on $\TOP$.

\begin{corollary}
  Consider the lower Vietoris functor $\ftV \colon \TOP \to \TOP$ and the space
  $(\N+\infty, \tau)$ whose topology is generated by the sets $[n, \infty]$, for
  $n \in \N$. The map $f \colon (\N+\infty, \tau) \to \ftV(\N+\infty,\tau)$
  defined by
  \begin{equation*}
  f(n) = \begin{cases}
           \varnothing, & n = 0      \\
           \N+\infty,   & n = \infty \\
           \upc (n-1),  & \text{otherwise.}
         \end{cases}
  \end{equation*}
  is a terminal coalgebra for $\ftV \colon \TOP \to \TOP$.
\end{corollary}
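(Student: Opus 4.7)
My strategy is to transport the terminal coalgebra from Theorem~\ref{p:16} along a functor $\ftI \colon \ORDCH \to \TOP$ that sends $(X, \leq, \tau)$ to the topological space on $X$ whose open sets are the $\tau$-open downsets of $\leq$. A direct inspection of the one-point compactification shows that $\ftI(\N+\infty, \geq, \tau^*)$ has open sets exactly $\varnothing$ together with the sets $[n, \infty]$ for $n \in \N$, recovering the space $(\N+\infty, \tau)$ of the corollary.

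Next, I would establish an intertwining $\ftV \circ \ftI \cong \ftI \circ \ftH$. The underlying set of $\ftV \ftI(X, \leq, \tau)$ consists of the $\tau$-closed upsets in $\leq$, matching the underlying set of $\ftH(X, \leq, \tau)$ by Corollary~\ref{d:cor:2}. For the topologies, one checks that the ``hit'' sub-basic sets $V^\Diamond$ of the hit-and-miss topology on $\ftH X$ are $\supseteq$-downsets in the underlying order of $\ftH X$, while the ``miss'' sub-basic sets $W^\Box$ are $\supseteq$-upsets. Applying $\ftI$ to $\ftH X$ selects the open $\supseteq$-downsets and therefore retains only the hit-generated opens, matching the sub-basis for the lower Vietoris topology on $\ftV \ftI X$. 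Combined with $\ftI 1 = 1$, this intertwining sends the cone of Lemma~\ref{p:15} in $\ORDCH$ to the cone $(\min(-, n) \colon (\N+\infty, \tau) \to \ftV^n 1)_{n \in \N}$ in $\TOP$.

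Finally, since the lower Vietoris functor preserves codirected limits in $\TOP$ (a key ingredient of the terminal-coalgebra result for $\ftV$ from \cite{HNN19}), its terminal coalgebra is the limit of the tower $1 \leftarrow \ftV 1 \leftarrow \ftV^2 1 \leftarrow \cdots$. I would verify that the above cone is indeed this limit via the Bourbaki criterion \cite[Theorem~3.29]{HNN19}: each $\min(-, n)$ is surjective and continuous (preimages of basic opens $[k, n] \subseteq \ftV^n 1$ equal $[k, \infty] \in \tau$), the cone is point-separating, and it is initial with respect to $\TOP \to \SET$ because the generated initial topology has sub-basis exactly $\{[k, \infty] : k \in \N\}$. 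The explicit formula for $f$ then follows immediately from Theorem~\ref{p:16}.

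The main obstacle I foresee is the rigorous verification of the intertwining $\ftV \circ \ftI \cong \ftI \circ \ftH$: specifically, showing that every open $\supseteq$-downset of the full hit-and-miss topology is already generated by hit sub-basic sets alone. This amounts to checking that any finite intersection of $V^\Diamond$'s and $W^\Box$'s that happens to be a $\supseteq$-downset can be locally replaced by a finite intersection of $V^\Diamond$'s — a combinatorial unwinding which, while straightforward in small examples, requires some care in general.
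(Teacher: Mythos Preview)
Your approach is correct and follows the same conceptual route as the paper: transport the terminal coalgebra from $\ORDCH$ to $\TOP$ via the functor that extracts the associated stably compact topology. The paper, however, packages this more economically. Rather than defining $\ftI$ by hand and verifying the intertwining $\ftV\circ\ftI\cong\ftI\circ\ftH$ directly, it invokes the classical equivalence $\ORDCH_\sep\simeq\STCOMP$ (due to \cite{GHK+80}) together with the known fact that $\ftV$ restricts to $\STCOMP$ (see \cite{Sch93}); under the equivalence this restriction \emph{is} $\ftH$ on $\ORDCH_\sep$, so your ``main obstacle'' dissolves into a citation. The paper then appeals to \cite[Theorem~3.36]{HNN19} for the fact that the terminal $\ftV$-coalgebra on $\TOP$ already lives in $\STCOMP$, and simply applies the composite $\ORDCH_\sep\xrightarrow{\simeq}\STCOMP\hookrightarrow\TOP$ to the coalgebra of Theorem~\ref{p:16}. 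Your direct verification of the limit in $\TOP$ via the Bourbaki criterion and the combinatorial analysis of open $\supseteq$-downsets would work, but it re-proves established material; the paper's argument is shorter precisely because it defers these points to the literature.
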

\begin{proof}
  The lower Vietoris functor $\ftV \colon \TOP \to \TOP$ restricts to the
  category $\STCOMP$ of stably compact spaces and spectral maps
  (see~\cite{Sch93}) which is isomorphic to the category $\ORDCH_\sep$
  (see~\cite{GHK+80}).  As observed in~\cite[Theorem~3.36]{HNN19}, the terminal
  coalgebra of the lower Vietoris functor on $\TOP$ can be obtained from the
  terminal coalgebra of the lower Vietoris on $\STCOMP$.  Since
  $\ftH \colon \ORDCH_\sep \to \ORDCH_\sep$ preserves codirected limits (see
  \cite[Corollary~3.33]{HNN19} or Theorem~\ref{d:thm:5} and
  Proposition~\ref{p:24}) and the limit and diagram of Lemma~\ref{p:15} actually
  live in $\ORDCH_\sep$, the claim follows by applying the functor
  \begin{equation*}
    \ORDCH_\sep
    \xrightarrow{\quad\simeq\quad} \STCOMP
    \longrightarrow \TOP
  \end{equation*}
  to the map of Theorem~\ref{p:16}.
\end{proof}

In the following we will see that the terminal coalgebra of
$\ftH \colon \CatCHs{\V} \to \CatCHs{\V}$ ``coincides'' with the terminal coalgebra
of $\ftH \colon \ORDCH \to \ORDCH$.

\begin{proposition}
  \label{p:23}
  Consider the lattice homomorphism $i \colon 2 \to \V$; that is $i(0) = \bot$
  and $i(1) = \top$.  The map $i$ induces a limit-preserving functor
  $\ftI \colon \ORDCH \to \CatCHs{\V}$ that keeps morphisms unchanged and sends
  an object $(X,a,\tau)$ of $\Cats{2}$ to $(X, i \cdot a, \tau)$.
\end{proposition}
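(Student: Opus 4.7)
The plan is to verify three things separately: that the object assignment lands in $\CatCHs{\V}$, that morphisms transport without modification, and that the resulting functor preserves limits.

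For well-definedness on objects, take $(X,a,\tau)\in\ORDCH$ (viewed as $\CatCHs{2}$). First I would check that $(X, i\cdot a)$ is a $\V$-category: reflexivity amounts to $k\le i(a(x,x))=i(1)=\top$, and transitivity is immediate because the composite $(i\cdot a)(y,z)\otimes (i\cdot a)(x,y)$ is either $\bot$ (done) or $\top\otimes\top$, in which case transitivity in $2$ gives $a(x,z)=1$ hence $(i\cdot a)(x,z)=\top\ge \top\otimes\top$. For the topological compatibility, I would apply Proposition~\ref{d:prop:4} in both directions: since $(X,a,\tau)$ is an ordered compact Hausdorff space, $a\colon (X,\tau)\times(X,\tau)\to(2,\xi_\le)$ is continuous, and it suffices to show that $i\colon(2,\xi_\le)\to(\V,\xi_\le)$ is continuous, so that $i\cdot a$ is continuous by composition and hence defines an object of $\CatCHs{\V}$ by another appeal to Proposition~\ref{d:prop:4}. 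Continuity of $i$ follows from the subbasis description of $\xi_\le$ in Remark~\ref{d:rem:6}: $i^{-1}((\upc v)^\complement)=\{t\in 2\mid v\nleq i(t)\}$ is either empty (when $v=\bot$) or equals $\{0\}$, and both are open in $(2,\xi_\le)$.

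For morphisms, if $f\colon (X,a,\tau)\to(Y,b,\sigma)$ is in $\ORDCH$, then $a(x,y)\le b(f(x),f(y))$ in $2$; since $i$ is monotone, $(i\cdot a)(x,y)\le (i\cdot b)(f(x),f(y))$, so $f$ is a $\V$-functor. Continuity is inherited verbatim, and functoriality is trivial as $f$ is unchanged.

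For limit preservation, I would use that both forgetful functors
\[
\ORDCH\longrightarrow\COMPHAUS \qquad\text{and}\qquad \CatCHs{\V}\longrightarrow\COMPHAUS
\]
are topological (the latter being noted in Remark~\ref{p:7}), hence faithful and limit-preserving, and that they clearly factor the obvious forgetful functor of $\ftI$. Given a limit cone $(f_i\colon L\to X_i)_{i\in I}$ in $\ORDCH$, the underlying cone in $\COMPHAUS$ is a limit, and by Theorem~\ref{p:31} the $\ORDCH$-structure on $L$ is the initial lift, that is $a(x,y)=\bigwedge_{i\in I}a_i(f_i(x),f_i(y))$. Applying $\ftI$ produces the same underlying compact Hausdorff limit cone in $\CatCHs{\V}$, and by Theorem~\ref{p:31} again it suffices to verify that $(f_i)$ is initial with respect to $\CatCHs{\V}\to\COMPHAUS$, that is,
\[
(i\cdot a)(x,y)=\bigwedge_{i\in I}(i\cdot a_i)(f_i(x),f_i(y)).
\]
This reduces to the commutation $i(\bigwedge_{i\in I}a_i(f_i(x),f_i(y)))=\bigwedge_{i\in I}i(a_i(f_i(x),f_i(y)))$, which holds because the values lie in $\{0,1\}$ and $i(0)=\bot$, $i(1)=\top$: either some term is $0$, in which case both sides equal $\bot$, or all terms are $1$, in which case both sides equal $\top$.

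I expect the only subtle point to be continuity of $i\colon(2,\xi_\le)\to(\V,\xi_\le)$, which needs the explicit subbasis from Remark~\ref{d:rem:6}; everything else reduces to bookkeeping about the two topological forgetful functors and the fact that $i$ preserves the particular infima that arise in the initial-lift construction.
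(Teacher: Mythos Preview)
Your argument is correct and follows essentially the same route as the paper: you use continuity of $i\colon(2,\xi_\le)\to(\V,\xi_\le)$ together with Proposition~\ref{d:prop:4} to establish the object assignment, and the preservation of infima by $i$ combined with the characterisation of limits via initial lifts (your Theorem~\ref{p:31}, which the paper invokes implicitly) for limit preservation. The only differences are cosmetic --- you verify the $\V$-category axioms and continuity of $i$ by hand, whereas the paper dispatches these by noting that $i$ is a lax quantale morphism and that its continuity is ``clear''.
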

\begin{proof}
  Let $(X, a, \tau)$ be an object of $\CatCHs{\V}$.  First, observe that $i$ is
  a lax homomorphism of quantales, hence $(X, a)$ is a $\V$-category;
  furthermore, it is clear that $i$ is a continuous function from
  $(2, \xi_\leq) \to (\V, \xi_\leq)$, hence by Proposition~\ref{d:prop:4},
  $(X, i \cdot a, \tau)$ defines an object of $\CatCHs{\V}$.  Now, a limit in
  $\CatCHs{\V}$ is a limit in $\COMPHAUS$ equipped with the initial structure
  with respect to the functor $\Cats{\V} \to \SET$.  Therefore, since $i$
  preserves infima, it follows that $\ftI \colon \ORDCH \to \CatCHs{\V}$
  preserves limits.
\end{proof}

\begin{corollary}
  The map
  $f \colon (\N+\infty, i \cdot \geq, \tau^*) \to \ftH(\N+\infty, i \cdot \geq, \tau^*)$
  defined by
  \begin{equation*}
    f(n) = \begin{cases}
      \varnothing, & n = 0      \\
      \N+\infty,   & n = \infty \\
      \upc (n-1),  & \text{otherwise,}
    \end{cases}
  \end{equation*}
  is a terminal coalgebra for $\ftH \colon \CatCHs{\V} \to \CatCHs{\V}$.
\end{corollary}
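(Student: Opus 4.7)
The plan is to transport Theorem~\ref{p:16} along the limit-preserving functor $\ftI \colon \ORDCH \to \CatCHs{\V}$ of Proposition~\ref{p:23}. Three ingredients are already available: (a)~by Theorem~\ref{d:thm:5}, $\ftH \colon \CatCHs{\V} \to \CatCHs{\V}$ preserves codirected limits, so standard Ad\'amek-style reasoning makes its terminal coalgebra exist and be realised as the limit of the terminal sequence
\[
  1 \longleftarrow \ftH 1 \longleftarrow \ftH\ftH 1 \longleftarrow \cdots
\]
in $\CatCHs{\V}$, with coalgebra structure induced by the universal property; (b)~by the first square of Proposition~\ref{p:6}, the functor $\ftI$ commutes with $\ftH$; and (c)~by Proposition~\ref{p:23}, $\ftI$ preserves all limits (in particular the terminal object).

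First I would combine (b) with the preservation of the terminal object under $\ftI$ to identify the terminal sequence for $\ftH$ in $\CatCHs{\V}$ with the $\ftI$-image of the terminal sequence for $\ftH$ in $\ORDCH$. Then, using (c), applying $\ftI$ to the limit cone of Lemma~\ref{p:15} --- whose apex is $(\N+\infty, \geq, \tau^*)$ and whose legs are the maps $\min(-,n)$ --- produces a limit cone for the terminal sequence in $\CatCHs{\V}$ with apex $\ftI(\N+\infty, \geq, \tau^*) = (\N+\infty, i\cdot\geq, \tau^*)$ and identical legs.

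Finally, by (a), this apex carries the terminal $\ftH$-coalgebra structure on $\CatCHs{\V}$, and this structure is the unique map to $\ftH$ of the apex making the cone a coalgebra morphism of terminal sequences. Because $\ftI$ keeps underlying sets and morphisms unchanged, applying $\ftI$ to the coalgebra map of Theorem~\ref{p:16} yields a coalgebra on $(\N+\infty, i\cdot\geq, \tau^*)$ that satisfies the same universal property in $\CatCHs{\V}$; by uniqueness it must coincide with the structure produced in the previous step, and it is literally the function $f$ stated in the corollary. No substantial obstacle is anticipated: once the preservation and intertwining facts are in hand, the argument is just a direct transport of the already-computed terminal coalgebra from $\ORDCH$ into $\CatCHs{\V}$.
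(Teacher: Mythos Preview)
Your proposal is correct and follows essentially the same route as the paper: both transport Theorem~\ref{p:16} along the limit-preserving functor $\ftI$ of Proposition~\ref{p:23}, using that $\ftI$ intertwines the two Hausdorff functors and hence carries the terminal sequence in $\ORDCH$ to the terminal sequence in $\CatCHs{\V}$. The only cosmetic difference is that the paper establishes the intertwining $\ftI\cdot\ftH'=\ftH\cdot\ftI$ directly from the fact that $i\colon 2\to\V$ preserves infima and suprema, whereas you invoke the first square of Proposition~\ref{p:6}; since that square's vertical arrow is not explicitly named there, it would be cleaner to justify the intertwining as the paper does.
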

\begin{proof}
  Let $\ftH'$ denote the Hausdorff functor on $\CatCHs{2}$.  Since
  $\ftI \colon \CatCHs{2} \to \CatCHs{\V}$ preserves limits then $\ftI(1)$ is
  the terminal object in $\CatCHs{\V}$.  Moreover, the lattice homomorphism
  $\ftI \colon 2 \to \V$ preserves infima and suprema, thus we obtain
  $\ftI \cdot \ftH' = \ftH \cdot \ftI$.  Consequently,
  \[
    \ftI(1\longleftarrow \ftH'1\longleftarrow \ftH'\ftH'1\longleftarrow \dots) =
    1\longleftarrow \ftH1\longleftarrow \ftH\ftH1\longleftarrow \dots.
  \]
  Therefore, the claim follows from Theorem~\ref{p:16} and
  Proposition~\ref{p:23}.
\end{proof}

The corollary above affirms implicitly that, in general, the terminal coalgebra
of the Hausdorff functor on $\CatCHs{\V}$ is rather simple. After all,
independently of the quantale $\V$, we end up with a terminal coalgebra whose
carrier is an ordered set. Hausdorff polynomial functors seem far more
interesting in this regard.

\begin{definition}
  Let $\catX$ be a subcategory of $\CatCHs{\V}$ closed under finite limits and
  colimits such that the Hausdorff functor
  $\ftH \colon \CatCHs{\V} \to \CatCHs{\V}$ restricts to $\catX$.  We call a
  functor \df{Hausdorff polynomial} on $\catX$ if it belongs to the smallest
  class of endofunctors on $\catX$ that contains the identity functor, all
  constant functors and is closed under composition with $\ftH$, products and
  sums of functors.
\end{definition}

\begin{proposition}
  Every Hausdorff polynomial functor on $\CatCHs{\V}$ preserves initial
  monomorphisms with respect to the functor $\CatCHs{\V} \to \COMPHAUS$.
\end{proposition}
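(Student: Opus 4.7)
The plan is to proceed by structural induction on the class of Hausdorff polynomial functors, which is generated from the identity functor and constant functors by composition with $\ftH$, finite products and finite sums. For the identity functor, the claim is trivial. For a constant functor at an object $C$, every morphism is sent to $1_C$, and $1_C$ is always an initial monomorphism with respect to the forgetful functor $\CatCHs{\V} \to \COMPHAUS$. The inductive step for composition with $\ftH$ reduces immediately to Corollary~\ref{p:26}, which asserts precisely that $\ftH$ preserves initial monomorphisms with respect to $\CatCHs{\V} \to \COMPHAUS$; hence if $\ftF$ preserves initial monomorphisms, so does $\ftH \cdot \ftF$.

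The remaining inductive steps concern binary products and sums of functors. Here the key general fact is that $\CatCHs{\V}$ is topological over $\COMPHAUS$ via the forgetful functor, so products in $\CatCHs{\V}$ are computed as products in $\COMPHAUS$ equipped with the initial $\V$-category structure, and sums as coproducts in $\COMPHAUS$ with the final structure (which is also initial over the discrete underlying compact Hausdorff coproduct). From this, a routine diagram chase shows that if $f \colon X \to Y$ and $g \colon X' \to Y'$ are both initial monomorphisms with respect to $\CatCHs{\V} \to \COMPHAUS$, then so are $f \times g \colon X \times X' \to Y \times Y'$ and $f + g \colon X + X' \to Y + Y'$: monic-ness transfers from $\COMPHAUS$ (where products and coproducts of monomorphisms between compact Hausdorff spaces are again monic), while initiality follows from the standard fact that initial structures are preserved under products and coproducts in a topological functor. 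Combining these, if $\ftF$ and $\ftG$ preserve initial monomorphisms, then so do $\ftF \times \ftG$ and $\ftF + \ftG$.

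The main conceptual obstacle is the product/sum step, since it relies on the behaviour of initial monomorphisms under limits and colimits in topological categories rather than on any property specific to $\ftH$. Once this standard property of the topological functor $\CatCHs{\V} \to \COMPHAUS$ is invoked, the structural induction closes uniformly: identity, constants, $\ftH$-composition (via Corollary~\ref{p:26}), products and sums all preserve initial monomorphisms, and the class of functors with this property is therefore closed under all the generating operations, yielding the conclusion for every Hausdorff polynomial functor on $\CatCHs{\V}$.
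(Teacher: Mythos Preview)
Your proposal is correct and follows essentially the same approach as the paper: the paper's proof is the one-line remark that the claim is ``an immediate consequence of Corollary~\ref{p:26} since the remaining cases trivially preserve initial monomorphisms'', and your structural induction simply unpacks what ``trivially'' means for the identity, constant, product and sum cases. Your explicit verification that products and coproducts of initial monomorphisms remain initial monomorphisms (using that $\CatCHs{\V}\to\COMPHAUS$ is topological) is exactly the content the paper leaves implicit.
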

\begin{proof}
  Immediate consequence of Corollary~\ref{p:26} since the remaining cases
  trivially preserve initial monomorphisms.
\end{proof}

\begin{proposition}
  Every Hausdorff polynomial functor on $\CatCHs{\V}$ preserves codirected
  limits.
\end{proposition}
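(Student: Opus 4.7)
The plan is to proceed by structural induction on the formation rules defining Hausdorff polynomial functors. For the base cases: the identity functor trivially preserves all limits, and a constant functor preserves every connected limit, so in particular every codirected limit (which is non-empty and connected).

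For the inductive step, three constructions need to be handled. Composition with $\ftH$ is immediate: if $\ftF$ preserves codirected limits, then so does $\ftH\circ\ftF$, since $\ftH \colon \CatCHs{\V}\to\CatCHs{\V}$ preserves codirected limits by Theorem~\ref{d:thm:5}. Finite products $\ftF\times \ftG$ are also immediate: products commute with all limits, so if $\ftF$ and $\ftG$ preserve codirected limits then their pointwise product does too.

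The main obstacle is the case of finite coproducts $\ftF+\ftG$. What is required is that binary coproducts in $\CatCHs{\V}$ themselves commute with codirected limits; granted this, if $(X_i)_i$ is codirected with limit $X$ and both $\ftF$, $\ftG$ preserve codirected limits, then $(\ftF+\ftG)(X) = \ftF X + \ftG X = (\lim_i \ftF X_i)+(\lim_i \ftG X_i) \cong \lim_i(\ftF X_i + \ftG X_i) = \lim_i (\ftF+\ftG)(X_i)$. To justify the commutativity of binary coproducts with codirected limits in $\CatCHs{\V}$, I use that the forgetful functor $\CatCHs{\V}\to\COMPHAUS$ is topological, hence preserves both limits and colimits. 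The underlying compact Hausdorff space of $\lim_i(X_i+Y_i)$ is the corresponding limit in $\COMPHAUS$, where a standard argument shows that every thread in a codirected diagram of binary coproducts $X_i+Y_i$ with structure maps of the form $f_i+g_i$ eventually lies entirely in the $X$-side or entirely in the $Y$-side (using codirectedness to find common predecessors), so the underlying space agrees with $(\lim X_i)+(\lim Y_i)$. On this underlying space, the $\V$-structure on the coproduct restricts to the original structures on each summand and is $\bot$ across the two components, and this coincides pointwise with the initial $\V$-structure inherited from $\lim_i(X_i+Y_i)$; so the $\V$-categorical structures also match.

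Combining these cases, structural induction yields the conclusion for every Hausdorff polynomial functor on $\CatCHs{\V}$.
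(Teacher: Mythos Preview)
Your proof is correct and follows essentially the same approach as the paper. Both proceed by structural induction, with the only nontrivial case being finite coproducts, and both reduce this to the two ingredients: that binary coproducts in $\COMPHAUS$ commute with codirected limits, and that the $\V$-structures then agree. The paper packages the argument slightly differently---it invokes Theorem~\ref{p:31} to split ``preserves codirected limits'' into ``underlying $\COMPHAUS$-cone is a limit'' plus ``cone is initial'', cites \cite{HNN19} for the $\COMPHAUS$ part, and calls the initiality check a routine calculation---whereas you sketch the $\COMPHAUS$ argument directly (threads in $\lim_i(X_i+Y_i)$ split according to component) and verify the matching of the initial $\V$-structures by hand. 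These are the same computations in a different order; neither gains or loses anything substantive.
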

\begin{proof}
  We already know from Theorem~\ref{d:thm:5} that
  $\ftH \colon \CatCHs{\V} \to \CatCHs{\V}$ preserves codirected
  limits. Moreover, a routine calculation reveals that the sum of functors that
  preserve codirected initial cones with respect to the forgetful functor
  $\CatCHs{\V} \to \COMPHAUS$ also does so.  Consequently, the sum preserves
  codirected limits by Theorem~\ref{p:31} since the sum on $\COMPHAUS$
  preserves codirected limits (for instance, see~\cite{HNN19}).  The remaining
  cases are trivial.
\end{proof}

In light of the previous results, now we can apply Theorem~\ref{p:27} to obtain:

\begin{theorem}
  \label{d:thm:6}
  The category of coalgebras of a Hausdorff polynomial functor on $\CatCHs{\V}$
  is (co)complete.
\end{theorem}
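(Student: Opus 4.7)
The plan is to deduce completeness by a direct application of Theorem~\ref{p:27}, and to obtain cocompleteness from the well-known fact that for any endofunctor $\ftF$ on a cocomplete category $\catX$ the forgetful functor $\CoAlg(\ftF)\to\catX$ creates colimits. Thus the argument splits cleanly into two halves, and essentially all the nontrivial content has already been assembled in the preceding results.

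For completeness, I would first check that $\CatCHs{\V}$ satisfies the hypotheses of Theorem~\ref{p:27}. Being topological over $\COMPHAUS$, $\CatCHs{\V}$ is both complete and cocomplete, and comes equipped with the (surjective, initial mono)-factorisation structure inherited from $\COMPHAUS$; in particular it is initial-mono-wellpowered and surjections are epimorphisms. Then I would verify the two requirements on the functor. First, that a Hausdorff polynomial functor $\ftF$ on $\CatCHs{\V}$ sends initial monomorphisms to initial monomorphisms, which is exactly the penultimate proposition of the section. Second, that $\ftF$ preserves codirected limits, which is exactly the previous proposition (itself built on Theorem~\ref{d:thm:5} together with the standard behaviour of constant functors, identity, products, and sums with respect to codirected limits). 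Theorem~\ref{p:27} then delivers completeness of $\CoAlg(\ftF)$.

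For cocompleteness I would invoke the standard categorical fact: if $\catX$ is cocomplete then so is $\CoAlg(\ftF)$ for any endofunctor $\ftF$, because the forgetful functor $\CoAlg(\ftF)\to\catX$ creates colimits. Concretely, given a diagram $\ftD\colon\catI\to\CoAlg(\ftF)$, one forms the colimit $(\ell_i\colon \ftD_i\to L)_{i\in\catI}$ of its underlying diagram in $\catX$, and the coalgebra structure on $L$ is induced by the universal property from the cocone $(\ftF\ell_i\cdot c_i)_{i\in\catI}$. Since $\CatCHs{\V}$ is cocomplete, this gives cocompleteness of $\CoAlg(\ftF)$ for every Hausdorff polynomial $\ftF$.

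I do not expect any real obstacle here: the heavy lifting (preservation of codirected limits by $\ftH$, preservation of initial monomorphisms, and the abstract completeness criterion) is already done, and it only remains to observe that these properties are stable under the constructors appearing in Hausdorff polynomial functors (identity, constants, finite products, finite sums, and postcomposition with $\ftH$), which is a routine induction on the structure of the functor and mirrors the argument already given for codirected limits.
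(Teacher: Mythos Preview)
Your proposal is correct and follows the same route as the paper: completeness via Theorem~\ref{p:27} using the (surjective, initial mono)-factorisation structure of $\CatCHs{\V}$ together with the two preceding propositions on preservation of initial monomorphisms and codirected limits, and cocompleteness via the standard creation of colimits by the forgetful functor $\CoAlg(\ftF)\to\CatCHs{\V}$. The paper's own proof is a single sentence (``In light of the previous results, now we can apply Theorem~\ref{p:27}''), so your explicit treatment of the cocompleteness half simply spells out what the paper leaves implicit.
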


Note that for Hausdorff polynomial functors, in general, we cannot apply the
same reasoning that led us to conclude that the terminal coalgebra of the
Hausdorff functor on $\CatCHs{\V}$ ``coincides'' with the terminal coalgebra of
the Hausdorff functor on $\ORDCH$.  For example, if $A$ ia a $\V$-categorical
compact Hausdorff space that does not come from an ordered set, then
applying the Hausdorff polynomial functor $\ftH
  \cdot (A \times \ftId)$ to the terminal object of $\CatCHs{\V}$ does not
necessarily yields a $\V$-category strucutre that comes from an
ordered set.

Now, by taking advantage of the results of
Appendix~\ref{sec:quant-enrich-categ}, we can deduce similar results for
Hausdorff polynomial functors on $\CatCHs{\V}_\sep$. However, to avoid
repetion, we conclude this paper by generalising the more interesting case of
Hausdorff polynomial functors on $\PRIEST$ discussed in \cite{HNN19}.

\begin{assumption}
  Until the end of the section we assume that $\V$ is a commutative and unital
  quantale such that for every $u \in \V$ the map $\hom(u,-) \colon (\V, \xi) \to
    (\V, \xi)$ is continuous.
\end{assumption}

\begin{definition}
  We call a $\V$-categorical compact Hausdorff space $X$ \df{Priestley} if the
  cone $\CatCHs{\V}(X,\V^\op)$ is initial and point-separating. We denote the
  full subcategory of $\CatCHs{\V}$ defined by all Priestley spaces by
  $\Priests{\V}$.
\end{definition}

\begin{example}
  For $\V=\two$, our notion of Priestley space coincides with the usual
  nomenclature for ordered compact Hausdorff spaces (see \cite{Pri70,Pri72}).
\end{example}

\begin{proposition}
  \label{p:35}
  The category $\Priests{\V}$ is closed under finite coproducts in
  $\CatCHs{\V}$.
\end{proposition}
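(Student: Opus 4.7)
The plan is to induct on the size of the indexing set, so it suffices to handle the empty coproduct and binary coproducts. The empty coproduct is the initial object $\varnothing$ of $\CatCHs{\V}$, for which the cone $\CatCHs{\V}(\varnothing,\V^\op)$ consists of the single empty map; this cone is vacuously point-separating and vacuously initial, so $\varnothing$ lies in $\Priests{\V}$.

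For the binary case, let $X,Y\in\Priests{\V}$ and form their coproduct $X+Y$ in $\CatCHs{\V}$. Since $\CatCHs{\V}$ is topological over $\COMPHAUS$ and finite coproducts there are disjoint unions, the underlying compact Hausdorff space of $X+Y$ is $X\sqcup Y$; a direct analysis of the final lift shows the $\V$-category structure agrees with $a_X$ on $X\times X$, with $a_Y$ on $Y\times Y$, and is constantly $\bot$ on the two mixed blocks. Each block is clopen in $(X+Y)\times(X+Y)$ and the restriction to each is continuous into $(\V,\xi_\le)$, so Proposition~\ref{d:prop:4} confirms $X+Y\in\CatCHs{\V}$.

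The key observation is that morphisms $h\colon X+Y\to\V^\op$ in $\CatCHs{\V}$ are exactly copairs $h=[f,g]$ with $f\in\CatCHs{\V}(X,\V^\op)$ and $g\in\CatCHs{\V}(Y,\V^\op)$. Point-separation then splits into cases: for distinct $p,q$ in the same summand, use the Priestley property of that summand to find a separating morphism and extend by an arbitrary constant on the other summand; for $p\in X$, $q\in Y$, any two distinct constant morphisms suffice. For initiality, when $p,q$ both lie in (say) $X$, letting $f$ range over $\CatCHs{\V}(X,\V^\op)$ while freezing $g$ recovers the Priestley cone on $X$, so the initial value at $(p,q)$ computes to $a_X(p,q)$; for $p\in X$, $q\in Y$, choosing $f\equiv\bot$ and $g\equiv\top$ gives $\hom(\top,\bot)=\bot$ (since $w\otimes\top\ge w\otimes k=w$ forces $w\otimes\top=\bot\Rightarrow w=\bot$), matching the structure $\bot$ at $(p,q)$; the mirror choice handles $(q,p)$.

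The main obstacle is essentially bookkeeping: keeping track of the variance introduced by $\V^\op$ when expressing the initial lift, and confirming that constants are morphisms into $\V^\op$ in $\CatCHs{\V}$ (which they are, trivially). Once these points are sorted, the two computations above deliver both initiality and point-separation for $X+Y$, completing the induction.
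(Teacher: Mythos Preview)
Your proof is correct and follows essentially the same approach as the paper's. The paper's argument is terser: it simply observes that for $f\in\CatCHs{\V}(A,\V^\op)$ and $g\in\CatCHs{\V}(B,\V^\op)$ the maps $f+\bot$ and $\bot+g$ are morphisms $A+B\to\V^\op$, and asserts that this sub-cone is already initial and point-separating; your version unpacks the same idea by working with general copairs $[f,g]$ and selecting the constants $\bot,\top$ explicitly for the cross terms, and you additionally treat the empty coproduct, which the paper omits.
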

\begin{proof}
  Let $A$ and $B$ be Priestley spaces. Note that for every morphism
  $f \colon A \to \V^\op$ and $g \colon B \to \V^\op$ in $\CatCHs{\V}$, the maps
  $f + \bot$ and $ \bot + g$, where $\bot$ represents the constant function
  $\bot$, are morphisms of type $A + B \to \V^\op$ in $\CatCHs{\V}$. Since $A$
  and $B$ are Priestley spaces, it follows that the cone of all these morphisms
  is initial and point-separating with respect to the functor
  $\CatCHs{\V} \to \COMPHAUS$.
\end{proof}

\begin{remark}
  \label{p:34}
  The inclusion functor $\Priests{\V}\hookrightarrow\CatCHs{\V}$ is right
  adjoint (see \cite[Theorem~16.8]{AHS90}); in particular $\Priests{\V}$ is
  complete and cocomplete and $\Priests{\V}\hookrightarrow\CatCHs{\V}$ preserves
  and reflects limits. Moreover, a mono-cone $(f_i \colon X\to X_i)_{i\in I}$ in
  $\Priests{\V}$ is initial with respect to $\Priests{\V}\to\COMPHAUS$ if and
  only if it is initial with respect to $\CatCHs{\V}\to\COMPHAUS$.
\end{remark}

\begin{proposition}
  \label{d:ex:1}
  The $\V$-categorical compact Hausdorff space $\V^\op$ is an algebra for $\mH$
  with algebra structure $\inf \colon\ftH \V^\op\to \V^\op$ that sends an
  element $A \in \ftH \V^\op$ to $\bigvee A$ (taken in $\V$).
\end{proposition}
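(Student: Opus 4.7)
The plan is to verify the two defining properties of an Eilenberg--Moore algebra for $\mH$: that $\inf$ is a morphism of $\CatCHs{\V}$ (i.e., both a $\V$-functor and continuous between the underlying compact Hausdorff spaces), and that the unit and multiplication laws hold. The main obstacle is establishing continuity; the remaining pieces are direct computations.

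For the $\V$-functor condition, $\V^\op$ has $\V$-category structure $\hom^\circ(u,v)=\hom(v,u)$, so the required inequality is
\[
  \bigwedge_{v\in B}\bigvee_{u\in A}\hom(v,u)\;\le\;\hom\!\bigl(\bigvee B,\bigvee A\bigr)
\]
for all $A,B\in\ftH\V^\op$. This follows from $\hom(\bigvee B,\bigvee A)=\bigwedge_{v\in B}\hom(v,\bigvee A)$ (since $-\otimes v$ is a left adjoint, its right adjoint $\hom(-,w)$ sends joins to meets) combined with $\hom(v,\bigvee A)\ge\bigvee_{u\in A}\hom(v,u)$ (monotonicity of $\hom(v,-)$).

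For continuity I test preimages of the Lawson subbasis of $(\V,\xi)=(\V^\op,\xi)$ from Remark~\ref{d:rem:6}. Interpolation of $\lll$, valid since $\V$ is (ccd), gives $v\lll\bigvee A\iff\exists a\in A\,.\,v\lll a$, whence
\[
  \inf^{-1}\{u\mid v\lll u\}=\{u\mid v\lll u\}^\Diamond,
\]
and $\{u\mid v\lll u\}$ is Scott-open and upward closed in $\V$, that is, open and co-increasing in $\V^\op$. Similarly, the identity $v=\bigvee\tbdown v$ unpacks $v\le\bigvee A$ as $\forall v'\lll v\,\exists a\in A\,.\,v'\le a$, so
\[
  \inf^{-1}\{u\mid v\nleq u\}=\bigcup_{v'\lll v}\{u\mid v'\nleq u\}^\Box,
\]
and each $\{u\mid v'\nleq u\}=(\upc v')^\complement$ is subbasic Lawson-open and downward closed in $\V$, that is, open and co-decreasing in $\V^\op$. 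Both preimages are thus open in the hit-and-miss topology, and $\inf$ is continuous.

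Finally, the algebra laws are routine using $\coyoneda_{\V^\op}(u)=\upc^{\V^\op}u=\downc^\V u$ and $\coyonmult_{\V^\op}(\mathcal A)=\bigcup\mathcal A$ from Theorem~\ref{d:thm:4}, together with Lemma~\ref{d:lem:9} applied to the (now closed) set $\inf(\mathcal A)$ to identify $\ftH\inf(\mathcal A)=\upc^{\V^\op}\inf(\mathcal A)=\downc^\V\{\bigvee A\mid A\in\mathcal A\}$. The unit law is $\inf\downc^\V u=\bigvee\downc^\V u=u$, and the multiplication law follows from
\[
  \inf\ftH\inf(\mathcal A)=\bigvee\bigl\{\bigvee A\bigm|A\in\mathcal A\bigr\}=\bigvee\bigcup\mathcal A=\inf\coyonmult_{\V^\op}(\mathcal A).
\]
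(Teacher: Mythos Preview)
Your proof is correct and follows essentially the same strategy as the paper: verify that $\inf$ is a $\V$-functor (the paper simply asserts this as clear; you spell it out), then establish continuity by computing preimages of a Lawson subbasis. The only substantive variation is the choice of subbasis: for the Scott-open half you use the sets $\{u\mid v\lll u\}$ and appeal to interpolation of $\lll$, whereas the paper uses $(\downc v)^\complement=\{u\mid u\nleq v\}$, for which the preimage computation $\bigvee A\nleq v\iff\exists a\in A\,.\,a\nleq v$ is the bare definition of supremum and needs no interpolation. Both routes are valid; the paper's is marginally more elementary. You also explicitly verify the unit and multiplication laws, which the paper omits entirely---this is a genuine addition on your part, and your use of Lemma~\ref{d:lem:9} to identify $\ftH\inf(\mathcal A)$ with the order-theoretic down-closure is appropriate.
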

\begin{proof}
  Clearly, $\inf \colon\ftH\V^\op\to\V^\op$ is a $\V$-functor. To see that
  $\inf$ is also continuous, recall from Remark~\ref{d:rem:6} that a subbasis
  for the Lawson topology of $\V$ is given by the sets
  \begin{equation*}
    (\downc v)^\complement=\{u\in\V\mid u\nleq v\}
    \quad\text{and}\quad
    (\upc v)^\complement=\{u\in\V\mid v\nleq u\}\qquad (v\in\V).
  \end{equation*}
  Note that $(\downc v)^\complement$ is decreasing in $\V^\op$ and $v^-$ is
  increasing in $\V^\op$. Let $v\in\V$. Then, for every $A\in\ftH(\V^\op)$,
  \begin{equation*}
    A\in{\inf}^{-1}((\downc v)^\complement)\iff \bigvee A\nleq v
    \iff A\cap (\downc v)^\complement\neq\varnothing;
  \end{equation*}
  that is,
  ${\inf}^{-1}((\downc v)^\complement)=((\downc v)^\complement)^\Diamond$. On
  the other hand,
  \begin{equation*}
    A\in{\inf}^{-1}((\upc v)^\complement)\iff
    v\nleq\bigvee A \iff
    \exists w\lll v\,\forall x\in A\,.\, w\nleq x \iff
    \exists w\lll v\,.\,A\subseteq (\upc w)^\complement;
  \end{equation*}
  that is,
  ${\inf}^{-1}((\upc v)^\complement)=\bigcup\{((\upc w)^\complement)^\Box\mid
  w\lll v\}$.
\end{proof}

In the sequel, given a morphism $\psi \colon X \to \V^\op$ of $\CatCHs{\V}$, we
denote by $\psi^\Diamond$ the composite

\begin{equation*}
  \ftH X \xrightarrow{\quad \ftH\psi \quad} \ftH(\V^\op) \xrightarrow{\quad
    \inf \quad} \V^\op
\end{equation*}

in $\CatCHs{\V}$.  With respect to the algebra structure of
Proposition~\ref{d:ex:1} above, we have:

\begin{proposition}
  \label{d:prop:3}
  Let $X$ be a $\V$-categorical compact Hausdorff space. Consider a
  $\V$-subcategory $\mathcal{R}\subseteq \V^X$ that is closed under finite
  weighted limits and such that $(\psi \colon X\to \V^\op)_{\psi\in\mathcal{R}}$
  is initial with respect to $\CatCHs{\V}\to\COMPHAUS$. Then the cone
  $(\psi^\Diamond \colon\ftH X\to \V^\op)_{\psi\in\mathcal{R}}$ is initial with
  respect to $\CatCHs{\V}\to\COMPHAUS$.
\end{proposition}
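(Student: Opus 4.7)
The statement asks for a verification that $(\psi^\Diamond)_{\psi\in\mathcal{R}}$ is initial with respect to the topological forgetful $\CatCHs{\V}\to\COMPHAUS$. Since the hit-and-miss topology on $\ftH X$ is given and is compatible with the Hausdorff $\V$-category structure by Proposition~\ref{d:prop:6}, the proof reduces to checking that $\ftH a$ coincides with the initial lift of the cone, i.e., that
\[
\ftH a(A,B)\;=\;\bigwedge_{\psi\in\mathcal{R}}\hom\!\Bigl(\bigvee_{y\in B}\psi(y),\,\bigvee_{x\in A}\psi(x)\Bigr)
\]
for all $A,B\in\ftH X$, where I have used $\psi^\Diamond(C)=\bigvee_{z\in C}\psi(z)$ (Proposition~\ref{d:ex:1}) and $\hom_{\V^\op}(u,v)=\hom(v,u)$.

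The inequality $\leq$ is automatic: Proposition~\ref{d:ex:1} identifies $\V^\op$ as an $\mH$-algebra, so Theorem~\ref{d:thm:2}(2) tells us that each $\psi^\Diamond$ is a morphism in $\CatCHs{\V}$; hence $\ftH a(A,B)\leq\hom_{\V^\op}(\psi^\Diamond A,\psi^\Diamond B)$ for every $\psi\in\mathcal{R}$.

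For the reverse inequality I would first decompose $\ftH a(A,B)=\bigwedge_{y_0\in B}\bigvee_{x\in A}a(x,y_0)$ and fix $y_0\in B$; complete distributivity of $\V$ (Assumption~\ref{d:ass:4}) then allows me to pick an arbitrary $v$ way-below the right-hand side of the displayed identity and reduce the task to producing $x\in A$ with $v\leq a(x,y_0)$. By the assumed initiality of $(\psi)_{\psi\in\mathcal{R}}$ on $X$, we have $a(x,y_0)=\bigwedge_{\psi}\hom(\psi(y_0),\psi(x))$, so I must find $x\in A$ such that $v\otimes\psi(y_0)\leq\psi(x)$ for every $\psi\in\mathcal{R}$. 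The sets
\[
F_\psi\;=\;A\cap\psi^{-1}\bigl(\upc(v\otimes\psi(y_0))\bigr)\qquad(\psi\in\mathcal{R})
\]
are closed in the compact Hausdorff space $X$: $\psi$ is continuous into $(\V,\xi)$, $\upc(v\otimes\psi(y_0))$ is Lawson-closed by Remark~\ref{d:rem:6}, and $A$ is closed by hypothesis. Compactness of $X$ then yields the desired $x$ as soon as $\{F_\psi\}_{\psi\in\mathcal{R}}$ enjoys the finite intersection property.

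The main obstacle is precisely this finite intersection property. The naive candidate witness functor $x\mapsto\hom(\psi_i(y_0),\psi_i(x))$ is the tensor in $\V^\op$ of $\psi_i$ by $\psi_i(y_0)$ --- that is, a weighted \emph{co}limit rather than a weighted limit --- so it need not lie in $\mathcal{R}$. Instead, the plan is to combine the closure of $\mathcal{R}$ under finite weighted limits (pointwise finite joins, which are conical products in $\V^\op$, and cotensors by $\V$-elements, which amount pointwise to tensors $u\otimes(-)$ in $\V$) with the standing continuity assumption on $\hom(u,-)\colon(\V,\xi)\to(\V,\xi)$, in order to build, for each finite subfamily $\psi_1,\dots,\psi_n\in\mathcal{R}$, an auxiliary $\psi^{\ast}\in\mathcal{R}$ whose bound $v\otimes\psi^{\ast}(y_0)\leq\bigvee_{A}\psi^{\ast}$, together with compactness of $A$ and the separating effect of initiality of $(\psi)_{\psi\in\mathcal{R}}$ at the level of $X$, forces the common witness $x$ to exist. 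This step is in spirit parallel to the explicit construction of a separating $\V$-functor used in the proof of Proposition~\ref{p:30}, but must be executed inside $\mathcal{R}$ and controlled topologically, which is where Assumption~\ref{d:ass:1} and the continuity hypothesis on $\hom(u,-)$ do the real work.
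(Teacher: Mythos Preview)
Your setup is correct and the easy inequality $\le$ is fine, but the hard direction is genuinely incomplete, with two concrete problems.

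First, you misidentify the operations available in $\mathcal{R}$. The hypothesis is that $\mathcal{R}\subseteq\V^X$ (not $(\V^\op)^X$) is closed under finite weighted limits; in $\V^X$ these are pointwise finite \emph{meets} and pointwise cotensors $\hom(u,-)$, not joins and tensors $u\otimes(-)$. Your parenthetical gets this backwards, so the auxiliary $\psi^\ast$ you hope to build from joins and tensors need not lie in $\mathcal{R}$ at all.

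Second, the finite-intersection step is only a plan, and even nonemptiness of the individual $F_\psi$ is unproved. From $v\le\hom\bigl(\bigvee_B\psi,\bigvee_A\psi\bigr)$ you only obtain $v\otimes\psi(y_0)\le\bigvee_{x\in A}\psi(x)$; there is no general reason why some $x\in A$ realises this bound (Lawson compactness of $\psi(A)$ does not force the supremum to be attained, and Lemma~\ref{d:lem:8} is specific to $k$ under Assumption~\ref{d:ass:1}). The FIP itself is never established.

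The paper's argument runs dually and sidesteps both issues. It begins with $u\ta\ftH a(A,B)$ (the totally-above relation), fixes a witness $y\in B$, and for each $x\in A$ uses initiality of $(\psi)_{\psi\in\mathcal{R}}$ to find $\psi$ with $u\ta\hom(\psi(y),\psi(x))$. The decisive move is to replace $\psi$ by $\widehat\psi=\hom(\psi(y),\psi(-))$, which is a \emph{cotensor} in $\V^X$ and hence still in $\mathcal{R}$; after this normalisation one has $k\le\widehat\psi(y)$ and $u\ta\widehat\psi(x)$. Thus $A$ is covered by the \emph{open} sets $\psi^{-1}(\tadown u)$ with $k\le\psi(y)$; compactness yields finitely many $\psi_1,\dots,\psi_n$, and one takes $\widehat\psi=\psi_1\wedge\dots\wedge\psi_n\in\mathcal{R}$ (a finite product in $\V^X$). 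Then $k\le\widehat\psi(y)\le\widehat\psi^\Diamond(B)$ and $u\ge\widehat\psi(x)$ for all $x\in A$, whence $\hom(\widehat\psi^\Diamond B,\widehat\psi^\Diamond A)\le\hom(k,u)=u$. So compactness enters via an open cover rather than a closed-set intersection, and the two operations actually guaranteed by the hypothesis---pointwise $\wedge$ and $\hom(v,-)$---are exactly what is used.
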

\begin{proof}
  We denote by $\ta$ the totally above relation of $\V$. Recall from
  Remark~\ref{d:rem:6} that, for every $u\in\V$, the set
  \begin{equation*}
    \tadown u=\{w\in\V\mid u\ta w\}
  \end{equation*}
  is open with respect to $\xi_\le$.  Let $A,B\in\ftH X$ and
  \begin{equation*}
    u\ta \ftH a(A,B)=\bigwedge_{y\in B}\bigvee_{x\in A}a(x,y).
  \end{equation*}
  Hence, there is some $y\in B$ so that, for all $x\in A$,
  \begin{equation*}
    u\ta a(x,y)=\bigwedge_{\psi\in\mathcal{R}}\hom(\psi(y),\psi(x)).
  \end{equation*}
  Let $x\in A$. There is some $\psi\in\mathcal{R}$ with
  $u\ta\hom(\psi(y),\psi(x))$; hence, with $v=\psi(y)$ and
  $\widehat{\psi}=\hom(v,\psi(-))\in\mathcal{R}$,
  \begin{equation*}
    u\ta\widehat{\psi}(x)
    \qquad\text{and}\qquad
    k\le \widehat{\psi}(y).
  \end{equation*}
  Therefore
  \begin{equation*}
    A\subseteq\bigcup\{\psi^{-1}(\tadown u)\mid \psi\in\mathcal{R},k\le\psi(y)\};
  \end{equation*}
  by compactness, there exist finitely many $\psi_1,\dots,\psi_n\in\mathcal{R}$
  so that $k\le\psi_i(y)$ and
  \begin{equation*}
    A\subseteq  \psi_1^{-1}(\tadown u)\cup \dots\cup \psi_n^{-1}(\tadown u).
  \end{equation*}
  Put $\widehat{\psi}=\psi_1\wedge\dots\wedge\psi_n\in\mathcal{R}$. Then
  $k\le \widehat{\psi}(y)$ and $u\ta \widehat{\psi}(x)$, for all $x\in
  A$. Therefore
  \begin{equation*}
    \hom(\widehat{\psi}^\Diamond(B),\widehat{\psi}^\Diamond(A))\le\hom(k,u)=u.\qedhere
  \end{equation*}
\end{proof}

\begin{proposition}
  \label{p:32}
  Let $(f \colon X\to X_i)_{i\in I}$ be a codirected cone in $\CatCHs{\V}$.
  Then
  \begin{equation*}
    \{ \varphi f_i \mid i \in I,\,\varphi \colon X_i \to \V^\op \in
    \CatCHs{\V} \} \subseteq \V^X
  \end{equation*}
  is closed under finite weighted limits.
\end{proposition}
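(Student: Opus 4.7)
The plan is to reduce an arbitrary finite collection in $S$ to a single index of the diagram via codirectedness, and then invoke closure properties of $\CatCHs{\V}(X_j,\V^\op)\subseteq\V^{X_j}$ under the operations that generate finite weighted limits, namely pointwise finite meets and cotensors $\psi\mapsto\hom(v,\psi(-))$ with $v\in\V$.

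First I would take finitely many elements $\varphi_1\cdot f_{i_1},\ldots,\varphi_n\cdot f_{i_n}$ of $S$ and, by codirectedness of the diagram, choose an index $j\in I$ together with morphisms $g_k\colon X_j\to X_{i_k}$ in $\CatCHs{\V}$ satisfying $f_{i_k}=g_k\cdot f_j$. Setting $\psi_k=\varphi_k\cdot g_k\in\CatCHs{\V}(X_j,\V^\op)$, the original elements become $\psi_k\cdot f_j$ and share the common right factor $f_j$. Since pointwise finite meets and cotensoring by $v\in\V$ both commute with precomposition by $f_j$, the task reduces to showing that $\CatCHs{\V}(X_j,\V^\op)$ is closed under these two operations.

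To finish I would verify this closure. For cotensoring, the standing hypothesis yields continuity of $\hom(v,-)\colon(\V,\xi)\to(\V,\xi)$, and the hom-tensor adjunction $-\otimes v\dashv\hom(v,-)$ gives $\V$-functoriality $\V^\op\to\V^\op$, so $\hom(v,-)\cdot\psi$ remains a $\CatCHs{\V}$-morphism. For finite meets, it suffices to show $\wedge\colon\V^\op\times\V^\op\to\V^\op$ lies in $\CatCHs{\V}$, after which the pointwise meet of morphisms factors through the diagonal. The $\V$-functoriality of $\wedge$ boils down to $(u_2\wedge v_2)\otimes\hom(u_2,u_1)\le u_1$ and its symmetric counterpart, which is immediate from the adjunction; joint continuity of $\wedge$ in the Lawson topology on $\V$ follows from complete distributivity.

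The main obstacle is precisely this joint Lawson-continuity of $\wedge$: all the enriched-category bookkeeping is formal, but continuity of meet is the essential use of complete distributivity of $\V$ and would require either a direct argument from the subbasis description of the Lawson topology in Remark~\ref{d:rem:6} or a citation to the continuous-lattices literature.
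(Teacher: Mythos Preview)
The paper states Proposition~\ref{p:32} without proof, so there is nothing to compare against directly. Your argument is the right one and fills the gap cleanly: reduce to a single index via codirectedness, then verify that $\CatCHs{\V}(X_j,\V^\op)$ is closed under the two operations that generate finite weighted limits over a quantale base, namely finite pointwise meets and cotensors $\psi\mapsto\hom(v,\psi(-))$. This is precisely the closure the proof of Proposition~\ref{d:prop:3} actually uses.

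Two remarks on the details. First, your verification of $\V$-functoriality of $\wedge\colon\V^\op\times\V^\op\to\V^\op$ is correct but slightly under-stated; the inequality you need is $\hom(u_2,u_1)\wedge\hom(v_2,v_1)\le\hom(u_2\wedge v_2,u_1\wedge v_1)$, and your line $(u_2\wedge v_2)\otimes\hom(u_2,u_1)\le u_1$ together with its symmetric twin does give it after one more step. Second, the Lawson-continuity of $\wedge$ that you flag as the main obstacle is in fact a standard consequence of Assumption~\ref{d:ass:4}: a completely distributive lattice is a continuous lattice, and continuous lattices are compact topological semilattices in their Lawson topology (see \cite[Theorem~VI-3.4]{GHK+03}), so $\wedge$ is jointly continuous. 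With that citation in hand there is no genuine obstacle left.
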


By Proposition~\ref{d:prop:3}, the Hausdorff functor restricts to a functor
$\ftH\colon \Priests{\V}\to\Priests{\V}$, hence the Hausdorff monad $\mH$
restricts to $\Priests{\V}$.

\begin{theorem}
  Every Hausdorff polynomial functor on $\Priests{\V}$ preserves codirected
  limits.
\end{theorem}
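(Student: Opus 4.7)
The plan is to prove the statement by structural induction on Hausdorff polynomial functors, using as the primary workhorse that the inclusion $\Priests{\V}\hookrightarrow\CatCHs{\V}$ preserves and reflects limits (Remark~\ref{p:34}) so that every result already established for $\CatCHs{\V}$ can be transported to $\Priests{\V}$, provided we check that the constructions at hand stay inside $\Priests{\V}$.

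For the base cases, the identity functor is trivial, and a constant functor sends a codirected diagram to the constant diagram, whose limit is the given value because codirected (hence connected, nonempty) diagrams have constant cones as their limits. For the inductive steps, suppose $\ftF$ and $\ftG$ preserve codirected limits on $\Priests{\V}$. Composition with $\ftH\colon\Priests{\V}\to\Priests{\V}$ is handled by combining Theorem~\ref{d:thm:5} with Remark~\ref{p:34}: given a codirected limit cone in $\Priests{\V}$, it is also the limit in $\CatCHs{\V}$; applying $\ftH$ yields, by Theorem~\ref{d:thm:5}, the limit in $\CatCHs{\V}$; since the image lies in $\Priests{\V}$ (by the remark preceding the statement, using Propositions~\ref{d:prop:3} and~\ref{p:32}) and the inclusion reflects limits, this is a limit in $\Priests{\V}$. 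The product case is immediate because products are computed levelwise in both $\Priests{\V}$ and $\CatCHs{\V}$ and commute with all limits.

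The main obstacle is the sum case. Here I follow the template already used in the proof that Hausdorff polynomial functors on $\CatCHs{\V}$ preserve codirected limits: reduce preservation of codirected limits to preservation of codirected initial cones with respect to a limit-preserving faithful forgetful functor to $\COMPHAUS$, by Theorem~\ref{p:31}. The composite $\Priests{\V}\hookrightarrow\CatCHs{\V}\to\COMPHAUS$ is faithful and preserves limits (Remark~\ref{p:34} together with the fact that $\CatCHs{\V}\to\COMPHAUS$ is topological hence limit-preserving). Moreover, by Proposition~\ref{p:35}, finite coproducts in $\Priests{\V}$ agree with those in $\CatCHs{\V}$, which in turn coincide (at the level of underlying sets/spaces) with finite coproducts in $\COMPHAUS$. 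Since finite coproducts in $\COMPHAUS$ preserve codirected limits, it only remains to verify that the sum $\ftF+\ftG$ of two functors that preserve codirected initial cones (with respect to $\Priests{\V}\to\COMPHAUS$) again does so; this is the same bookkeeping calculation noted in the proof of the analogous fact on $\CatCHs{\V}$, using that initiality with respect to $\Priests{\V}\to\COMPHAUS$ is the same as initiality with respect to $\CatCHs{\V}\to\COMPHAUS$ for mono-cones (Remark~\ref{p:34}), and that the initial structure on a disjoint union is the disjoint union of the initial structures.

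Putting the pieces together, Theorem~\ref{p:31} then upgrades preservation of codirected initial cones to preservation of codirected limits, closing the induction. The subtle point to watch during the write-up is that at each inductive step one must verify both halves required by Theorem~\ref{p:31} (preservation of the underlying limit in $\COMPHAUS$ and preservation of initiality), and that the diagrams involved can genuinely be taken in $\Priests{\V}$; the former is automatic for identity, products, and composition with $\ftH$, while for sums it is precisely the verification sketched above.
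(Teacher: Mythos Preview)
Your proof is correct, but the paper's argument is considerably shorter. The paper observes in one line that every Hausdorff polynomial functor on $\Priests{\V}$ is the restriction to $\Priests{\V}$ of a Hausdorff polynomial functor on $\CatCHs{\V}$ (this uses Proposition~\ref{p:35} for finite coproducts, Remark~\ref{p:34} for limits, and the already established fact that $\ftH$ restricts to $\Priests{\V}$), and then simply invokes that the inclusion $\Priests{\V}\hookrightarrow\CatCHs{\V}$ preserves and reflects limits (Remark~\ref{p:34}) together with the previously proven fact that every Hausdorff polynomial functor on $\CatCHs{\V}$ preserves codirected limits. Thus the whole transport happens once, at the level of the composite polynomial functor, rather than case-by-case.

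Your approach applies essentially the same ``preserves and reflects limits'' transfer, but only to the $\ftH$-composition step, and then redoes the remaining cases (identity, constant, product, sum) by hand inside $\Priests{\V}$, with the sum case requiring a detour through Theorem~\ref{p:31} and initiality bookkeeping. This is sound, and the sum argument you sketch is exactly the one used for $\CatCHs{\V}$, so nothing goes wrong; it just duplicates work. The advantage of your route is that it makes each inductive step self-contained; the advantage of the paper's route is that it reduces the entire theorem to a single application of the ``restriction along a full reflective subcategory that preserves and reflects limits'' principle, avoiding any re-examination of the polynomial grammar.
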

\begin{proof}
  Every Hausdorff polynomial functor on $\Priests{\V}$ corresponds to the
  restriction to $\Priests{\V}$ of a Hausdorff polynomial functor on
  $\CatCHs{\V}$ and the inclusion functor $\Priests{\V} \to \CatCHs{\V}$
  preserves and reflects limits (see Proposition~\ref{p:35} and
  Remark~\ref{p:34}).
\end{proof}

\begin{corollary}
  For every Hausdorff polynomial functor $\ftF$ on $\Priests{\V}$, the forgetful functor
  $\CoAlg(\ftF)\to\Priests{\V}$ is comonadic.
\end{corollary}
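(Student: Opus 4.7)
The strategy is to apply Beck's (dual) comonadicity theorem to the forgetful functor $\ftU\colon\CoAlg(\ftF)\to\Priests{\V}$. This reduces to three conditions: (a) $\ftU$ admits a right adjoint, i.e., cofree coalgebras exist; (b) $\ftU$ is conservative; (c) $\CoAlg(\ftF)$ has equalisers of $\ftU$-split pairs and $\ftU$ preserves them. Conditions (b) and (c) are automatic: conservativity holds because, for a coalgebra morphism $f\colon (A,\alpha)\to (B,\beta)$ that is invertible in $\Priests{\V}$, a direct diagram chase shows $f^{-1}$ is again a coalgebra morphism; and (c) holds because $\ftU$ creates any limit preserved by $\ftF$, and since every functor preserves split equalisers, $\ftU$ creates in particular the $\ftU$-split equalisers required by Beck.

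The only substantive step is (a). I would construct the cofree $\ftF$-coalgebra on $X\in\Priests{\V}$ via the standard terminal-sequence recipe. Set $X_0=X$ and $X_{n+1}=X\times\ftF X_n$, with connecting morphisms $X_{n+1}\to X_n$ defined inductively starting from the first projection $X\times\ftF X\to X$. Since $\Priests{\V}$ is complete by Remark~\ref{p:34}, the codirected limit $\widetilde{X}=\lim_{n<\omega} X_n$ exists in $\Priests{\V}$. By the preceding theorem, $\ftF$ preserves codirected limits, so $\ftF\widetilde{X}\cong\lim_{n<\omega}\ftF X_n$, and the composite projections $\widetilde{X}\to X_{n+1}\to\ftF X_n$ assemble into a canonical coalgebra structure $\gamma\colon\widetilde{X}\to\ftF\widetilde{X}$. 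A routine induction using the first-projection components $\widetilde{X}\to X_{n+1}\to X$ and the universal property of the limit verifies that $(\widetilde{X},\gamma)$ represents the cofree $\ftF$-coalgebra on $X$.

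The main obstacle is ensuring that this terminal sequence converges at stage $\omega$, which is precisely what preservation of codirected limits by $\ftF$ provides. As a cross-check, exactly the same template yields the already-recorded comonadicity of $\CoAlg(\ftH)\to\CatCHs{\V}$ via Theorem~\ref{d:thm:5}; in the Priestley setting, no additional input is required beyond completeness of $\Priests{\V}$ and preservation of codirected limits by $\ftF$, both of which are in place.
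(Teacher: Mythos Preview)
Your proof is correct. The paper states this corollary without proof, treating it as an immediate consequence of the preceding theorem (preservation of codirected limits by Hausdorff polynomial functors on $\Priests{\V}$) together with completeness of $\Priests{\V}$ from Remark~\ref{p:34}; your argument via Beck's comonadicity theorem, with cofree coalgebras built from the terminal sequence, is precisely the standard justification the paper is tacitly invoking (and parallels the earlier unproved corollary for $\ftH$ on $\CatCHs{\V}$ after Theorem~\ref{d:thm:5}).
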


\begin{theorem}
  The category of coalgebras of a Hausdorff polynomial functor $\ftF$ on
  $\Priests{\V}$ is complete. Moreover, the functor
  $\CoAlg(\ftF) \to \Priests{\V}$ preserves codirected limits.
\end{theorem}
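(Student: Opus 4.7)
The plan is to reduce the statement to an application of Theorem~\ref{p:27} together with the general creation-of-limits principle for forgetful functors $\CoAlg(\ftF)\to\catX$. Concretely, I would verify the hypotheses of Theorem~\ref{p:27} for the pair $(\Priests{\V},\ftF)$ and then deduce the ``moreover'' part from the preservation of codirected limits by $\ftF$ itself.

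First I would check the hypotheses on the base category. By Remark~\ref{p:34} the inclusion $\Priests{\V}\hookrightarrow\CatCHs{\V}$ is a right adjoint, so $\Priests{\V}$ is complete with limits created from $\CatCHs{\V}$; Proposition~\ref{p:35} (extended by a routine induction from finite to arbitrary coproducts) together with the reflector yields cocompleteness. An $(E,M)$-factorisation structure with $E$ surjective and $M$ the initial monomorphisms is inherited from $\CatCHs{\V}$: by the last sentence of Remark~\ref{p:34}, mono-cones in $\Priests{\V}$ that are initial with respect to $\Priests{\V}\to\COMPHAUS$ agree with those that are initial with respect to $\CatCHs{\V}\to\COMPHAUS$, so $M$-wellpoweredness transfers from $\CatCHs{\V}$ (which is $M$-wellpowered since it is topological over $\COMPHAUS$), and $E$ clearly consists of epimorphisms.

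Next I would verify the hypotheses on the functor $\ftF$. Preservation of codirected limits for every Hausdorff polynomial functor on $\Priests{\V}$ is precisely the theorem just proved above this statement. Preservation of initial monomorphisms reduces to the analogous fact for $\CatCHs{\V}$: the identity functor and constant functors do so trivially, sums and products are handled by the standard argument used in the corresponding proposition for $\CatCHs{\V}$, and composition with $\ftH$ uses Corollary~\ref{p:26} combined with the fact that the inclusion $\Priests{\V}\hookrightarrow\CatCHs{\V}$ both preserves and reflects initial monomorphisms (Remark~\ref{p:34}). At this point Theorem~\ref{p:27} delivers completeness of $\CoAlg(\ftF)$.

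For the ``moreover'' clause, I would invoke the standard observation that if $\ftF\colon\catX\to\catX$ preserves limits of shape $\catI$, then the forgetful functor $\CoAlg(\ftF)\to\catX$ creates, and in particular preserves, limits of shape $\catI$: given a diagram $((X_i,c_i))_{i\in\catI}$ with limit $X=\lim X_i$ in $\catX$, the cone $(\ftF X\to \ftF X_i)$ is a limit, so the $c_i$ induce a unique coalgebra structure $c\colon X\to \ftF X$ making $(X,c)$ the limit in $\CoAlg(\ftF)$. Applying this with $\catI$ codirected and invoking the previous theorem yields the second assertion. The only place where genuine work is required is the transfer of ``preservation of initial monomorphisms'' from $\CatCHs{\V}$ to $\Priests{\V}$ for composite functors; but this is exactly what Remark~\ref{p:34} is tailored to provide, so no real obstacle remains.
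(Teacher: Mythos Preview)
Your proposal is correct and follows essentially the same route as the paper: verify the hypotheses of Theorem~\ref{p:27} for $\Priests{\V}$ and $\ftF$ by transferring the factorisation structure and the preservation of initial monomorphisms from $\CatCHs{\V}$ via Remark~\ref{p:34}, then invoke the previous theorem for preservation of codirected limits. The paper's proof is a one-liner that leaves implicit exactly the points you spell out. One minor remark: your detour through extending Proposition~\ref{p:35} to arbitrary coproducts is unnecessary, since Remark~\ref{p:34} already gives cocompleteness of $\Priests{\V}$ directly (a reflective subcategory of a cocomplete category is cocomplete).
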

\begin{proof}
  The category $\Priests{\V}$ inherits the (surjective, initial
  mono-cone)-factorisation structure from $\CatCHs{\V}$.  Therefore, the
  previous discussion shows that we can apply Theorem~\ref{p:27}.
\end{proof}

\appendix
\section{Appendix}
\label{sec:quant-enrich-categ}

In this section we collect some facts about $\V$-categories and $\V$-functors,
for more information we refer to \cite{Law73,Stu14}. Furthermore, we present
some useful properties of the reflector into the category of separated
$\V$-categories that follow from standard arguments, but seem to be absent from
the literature.

\begin{definition}
  Let $\V$ be a commutative and unital quantale. A \df{$\V$-category} is a pair $(X,a)$ consisting of a
  set $X$ and a map $a \colon X\times X\to\V$ satisfying
  \begin{align*}
    k\le a(x,x) &&\text{and}&& a(x,y)\otimes a(y,z)\le a(x,z),
  \end{align*}
  for all $x,y,z\in X$. Given $\V$-categories $(X,a)$ and $(Y,b)$, a
  \df{$\V$-functor} $f \colon (X,a)\to (Y,b)$ is a map $f \colon X\to Y$ such
  that
  \[
    a(x,y) \le b(f(x),f(y)),
  \]
  for all $x,y \in X$.
\end{definition}

In particular, the quantale $\V$ becomes a $\V$-category with structure
$\hom \colon \V\times\V\to\V$.

For every $\V$-category $(X,a)$, $a^\circ(x,y)=a(y,x)$ defines another
$\V$-category structure on $X$, and the $\V$-category $(X,a)^\op:=(X,a^\circ)$
is called the \df{dual} of $(X,a)$.  A $\V$-category $(X,a)$ is called
\df{symmetric} whenever $(X,a)=(X,a)^\op$.

Clearly, $\V$-categories and $\V$-functors define a category, denoted as
$\Cats{\V}$. The full subcategory of $\Cats{\V}$ defined by all symmetric
$\V$-categories is denoted as $\Cats{\V}_\sym$.

\begin{remark}
  \label{p:12}
  Given $\V$-categories $(X,a)$ and $(Y,b)$, we define the tensor product of
  $(X,a)$ and $(Y,b)$ to be the $\V$-category
  $(X,a) \otimes (Y,b) = (X \times Y, a \otimes b)$, with
  \begin{equation*}
    a \otimes b ((x,y),(x',y')) = a(x,x') \otimes b(y,y').
  \end{equation*}
  This operation makes $\Cats{\V}$ a symmetric monoidal closed category, where
  the internal $\hom$ of $(X,a)$ and $(Y,b)$ is the $\V$-category
  $[(X,a), (Y,b)] = (\Cats{\V}((X,a),(X,b)), [-,-])$, with
  \begin{equation*}
    [f,g] = \bigwedge_{x \in X} b(f(x), g(x)).
  \end{equation*}
  We note that $[(X,a), (Y,b)]$ is a $\V$-subcategory of the $X$-fold product
  $(Y,b)^X$ of $(Y,b)$.
\end{remark}

The following propositions are particularly useful to construct $\V$-functors
when combined with the fact that $\Cats{\V}$ is symmetrical monoidal closed.

\begin{proposition}
  \label{p:9}
  For every set $I$, the assignments $f \mapsto \bigvee_{i \in I} f(i)$ and
  $f \mapsto \bigwedge_{i \in I} f(i)$ define $\V$-functors of type $\V^I\to\V$.
\end{proposition}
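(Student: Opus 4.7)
The plan is a direct computation using the adjunction $u \otimes (-) \dashv \hom(u,-)$ that holds in every quantale, combined with the fact that $\otimes$ preserves arbitrary suprema in each argument.

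First recall that the product $\V^I$ carries the $\V$-category structure
\[
  [f,g] = \bigwedge_{i \in I} \hom(f(i), g(i)),
\]
so the task reduces to verifying the two inequalities
\[
  \bigwedge_{i \in I}\hom(f(i),g(i)) \le \hom\!\left(\bigvee_{i \in I} f(i),\bigvee_{i \in I} g(i)\right)
  \quad\text{and}\quad
  \bigwedge_{i \in I}\hom(f(i),g(i)) \le \hom\!\left(\bigwedge_{i \in I} f(i),\bigwedge_{i \in I} g(i)\right)
\]
for all $f,g \in \V^I$.

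Setting $u = \bigwedge_{i \in I} \hom(f(i),g(i))$, by the quantale adjunction one has $u \otimes f(i) \le g(i)$ for every $i \in I$. For the supremum case, I would then use the fact that $\otimes$ preserves suprema in the right variable to compute
\[
  u \otimes \bigvee_{i \in I} f(i) = \bigvee_{i \in I} \bigl(u \otimes f(i)\bigr) \le \bigvee_{i \in I} g(i),
\]
and transpose back via the adjunction to obtain $u \le \hom(\bigvee_i f(i),\bigvee_i g(i))$. For the infimum case, from $\bigwedge_{i} f(i) \le f(j)$ and the monotonicity of $\otimes$ we get $u \otimes \bigwedge_i f(i) \le u \otimes f(j) \le g(j)$ for every $j$, hence $u \otimes \bigwedge_i f(i) \le \bigwedge_j g(j)$, and again the adjunction yields the desired inequality.

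There is no real obstacle here; the only subtle point is to use \emph{suprema-preservation} of $\otimes$ for the $\bigvee$ case (which is the defining property of a quantale) and plain monotonicity for the $\bigwedge$ case (since $\otimes$ need not preserve infima). Both assignments are manifestly functions $\V^I \to \V$, so verifying the $\V$-functoriality inequality is all that is required.
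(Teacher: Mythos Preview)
Your argument is correct. The paper actually states this proposition without proof in its appendix, treating it as a standard fact from the theory of $\V$-categories (with a general reference to \cite{Law73,Stu14}); your direct verification via the tensor--hom adjunction is exactly the expected elementary argument, and your care in distinguishing the use of suprema-preservation of $\otimes$ for the $\bigvee$ case from mere monotonicity for the $\bigwedge$ case is the right point of emphasis.
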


\begin{proposition}
  \label{p:10}
  For every $\V$-category $(X,a)$, the map
  $a \colon (X,a)^\op \otimes (X,a) \to (\V, \hom)$ is a $\V$-functor.
\end{proposition}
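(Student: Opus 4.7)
The plan is to unfold the definitions and reduce the claim to two applications of the transitivity axiom of $\V$-categories, using the $\otimes\dashv\hom$ adjunction.

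First I would spell out what needs to be shown. By definition of the tensor product of $\V$-categories (see Remark~\ref{p:12}) and of the dual, for $(x,y),(x',y')\in X\times X$ the inequality to establish is
\[
  a(x',x)\otimes a(y,y') \;\le\; \hom(a(x,y),\,a(x',y')).
\]
By the defining adjunction of the internal hom of $\V$, namely $u\le\hom(v,w)\iff u\otimes v\le w$, together with commutativity of $\otimes$, this is equivalent to
\[
  a(x',x)\otimes a(x,y)\otimes a(y,y') \;\le\; a(x',y').
\]

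Next I would chain two instances of transitivity. The axiom $a(x',x)\otimes a(x,y)\le a(x',y)$ gives, after tensoring on the right with $a(y,y')$ and using monotonicity of $\otimes$,
\[
  a(x',x)\otimes a(x,y)\otimes a(y,y') \;\le\; a(x',y)\otimes a(y,y') \;\le\; a(x',y'),
\]
where the last step is again transitivity. This completes the verification.

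There is no real obstacle here; the only point requiring care is that the hom-tensor adjunction $u\le\hom(v,w)\iff u\otimes v\le w$ is used with the correct pairing (so that after swapping, the three tensor factors line up as a composable chain $x'\to x\to y\to y'$), and that commutativity of $\otimes$ is invoked to rearrange them. Everything else is immediate from the two $\V$-category axioms recalled in the appendix.
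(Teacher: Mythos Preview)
Your argument is correct: unfolding the tensor and dual structures and using the $\otimes\dashv\hom$ adjunction reduces the claim to two applications of transitivity, exactly as you describe. The paper states this proposition without proof (it is a standard fact about $\V$-categories), so your direct verification is entirely appropriate and there is nothing to compare.
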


The category $\Cats{\V}$ is well behaved regarding (co)limits.

\begin{theorem}
  \label{p:22}
  The canonical forgetful functor $\Cats{\V}\to\SET$ is topological. For a
  structured cone $(f_i \colon X\to (X_i,a_i))$, the initial lift $(X,a)$ is
  given by
  \begin{equation*}
    a(x,y) = \bigwedge_{i\in I}a_i(f_i(x),f_i(y)),
  \end{equation*}
  for all $x,y\in X$. Moreover, $\Cats{\V}_\sym$ is closed in $\Cats{\V}$ under
  initial cones; therefore the canonical forgetful functor
  $\Cats{\V}_\sym\to\SET$ is topological as well, and the inclusion functor
  $\Cats{\V}_\sym\hookrightarrow\Cats{\V}$ has a left adjoint.
\end{theorem}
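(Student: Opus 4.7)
The plan is to prove the three claims in sequence, using only the definition of a $\V$-category and of a topological functor.

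For the first claim, given a structured cone $(f_i \colon X \to (X_i, a_i))_{i \in I}$, I would first verify that the formula $a(x,y) = \bigwedge_{i \in I} a_i(f_i(x), f_i(y))$ defines a $\V$-category structure on $X$. Reflexivity $k \le a(x,x)$ is immediate from $k \le a_i(f_i(x), f_i(x))$ for every $i$, and transitivity follows from monotonicity of $\otimes$: for each $i$, $a(x,y) \otimes a(y,z) \le a_i(f_i(x), f_i(y)) \otimes a_i(f_i(y), f_i(z)) \le a_i(f_i(x), f_i(z))$, so taking the infimum over $i$ gives $a(x,y) \otimes a(y,z) \le a(x,z)$. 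By construction each $f_i \colon (X,a) \to (X_i, a_i)$ is a $\V$-functor. For initiality, if $(Y,b)$ is any $\V$-category and $g \colon Y \to X$ is a function such that every $f_i \cdot g$ is a $\V$-functor, then $b(y,y') \le a_i(f_i g(y), f_i g(y'))$ for each $i$, so $b(y,y') \le a(g(y), g(y'))$, showing $g$ is a $\V$-functor. Fibre-smallness is trivial since a $\V$-category structure on $X$ belongs to the small set $\V^{X \times X}$, so $\Cats{\V} \to \SET$ is topological with the stated initial lifts.

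For the second claim, the initial lift formula visibly preserves symmetry: if each $a_i = a_i^\circ$, then $a(x,y) = \bigwedge_i a_i(f_i(x), f_i(y)) = \bigwedge_i a_i(f_i(y), f_i(x)) = a(y,x)$. Hence $\Cats{\V}_\sym$ is closed under initial cones in $\Cats{\V}$, and rerunning the previous argument inside $\Cats{\V}_\sym$ immediately shows that $\Cats{\V}_\sym \to \SET$ is topological.

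For the left adjoint, I would define $\ftR(X,a) = (X, a^\sym)$, where $a^\sym$ is the pointwise infimum of all symmetric $\V$-category structures on $X$ lying above $a$. This class is non-empty (the constantly $\top$ structure works) and is closed under pointwise infima by the same calculation as in the transitivity check above, so $a^\sym$ is itself a symmetric $\V$-category structure above $a$. The unit $\eta_{(X,a)} = 1_X \colon (X,a) \to (X, a^\sym)$ is a $\V$-functor by construction, and the universal property is immediate: any $\V$-functor $f \colon (X,a) \to (Y,b)$ with $(Y,b)$ symmetric induces the symmetric $\V$-category structure $c(x,y) = b(f(x), f(y))$ on $X$, which lies above $a$ and hence dominates $a^\sym$, so that $f \colon (X, a^\sym) \to (Y,b)$ remains a $\V$-functor; uniqueness is trivial because the unit is an identity on underlying sets. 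No step presents a serious obstacle; the main thing to keep straight is the direction of the fibre order, namely $\alpha \le \beta$ iff $\alpha(x,y) \le \beta(x,y)$ pointwise.
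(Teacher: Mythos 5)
Your proposal is correct and follows exactly the standard argument that the paper relies on for this appendix statement (which it records without proof): verify that the pointwise infimum formula yields a $\V$-category structure making the cone initial, observe that the formula preserves symmetry, and obtain the reflector as the least symmetric structure above $a$, which exists because symmetric structures above $a$ form a non-empty class closed under infima. The only point worth keeping in mind is that the left adjoint here is genuinely different from the symmetrisation $a\wedge a^\circ$ described after the theorem, which is the \emph{right} adjoint -- your construction correctly goes upward in the fibre order.
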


We also recall that $\Cats{\V}_\sym\hookrightarrow\Cats{\V}$ has a concrete
right adjoint which sends the $\V$-category $(X,a)$ to its \df{symmetrisation}
$(X,a_s)$ given by
\begin{equation*}
  a_s(x,y) = a(x,y) \wedge a(y,x),
\end{equation*}
for all $x,y\in X$.

Every $\V$-category $(X,a)$ carries a natural order defined by
\[
  x\le y \text{ whenever } k\le a(x,y),
\]
which can be extended pointwise to $\V$-functors making $\Cats{\V}$ a
\emph{2-category}. The natural order of $\V$-categories defines a faithful
functor $\Cats{\V}\to\ORD$. A $\V$-category is called \df{separated} whenever
its underlying ordered set is anti-symmetric, and we denote by $\Cats{\V}_\sep$
the full subcategory of $\Cats{\V}$ defined by all separated
$\V$-categories. Tautologically, an ordered set is separated if and only if it
is anti-symmetric.

\begin{theorem}
  $\Cats{\V}_\sep$ is closed in $\Cats{\V}$ under monocones. Hence, the
  forgetful functor $\Cats{\V}_\sep\to\SET$ is mono-topological and the
  inclusion functor $\Cats{\V}_\sep\hookrightarrow\Cats{\V}$ has a left adjoint.
\end{theorem}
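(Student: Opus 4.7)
The plan is to verify the closure-under-monocones assertion by a direct element-chase, then derive mono-topologicality as an immediate corollary using Theorem~\ref{p:22}, and finally construct the left adjoint by the explicit antisymmetric quotient of the underlying preorder.

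First I would prove the closure claim. Take a monocone $(f_i \colon (X,a) \to (X_i, a_i))_{i \in I}$ in $\Cats{\V}$ with each codomain separated, and suppose $x, y \in X$ satisfy $k \le a(x,y)$ and $k \le a(y,x)$. Since each $f_i$ is a $\V$-functor, we get $k \le a_i(f_i(x), f_i(y))$ and $k \le a_i(f_i(y), f_i(x))$, i.e.\ $f_i(x) \le f_i(y)$ and $f_i(y) \le f_i(x)$ in the underlying order of $(X_i, a_i)$; since $(X_i, a_i)$ is separated, $f_i(x) = f_i(y)$ for every $i \in I$. As the family $(f_i)_{i \in I}$ is jointly injective (equivalent to being a monocone in a concrete category such as $\Cats{\V}$), this forces $x = y$, so $(X, a)$ is separated.

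Next I would deduce mono-topologicality. Given a mono-structured cone $(f_i \colon Z \to \ftII{(X_i, a_i)})_{i \in I}$ in $\SET$ with each $(X_i, a_i)$ separated, Theorem~\ref{p:22} furnishes the initial lift $(Z, a_Z)$ in $\Cats{\V}$, whose structure is $a_Z(x,y) = \bigwedge_{i} a_i(f_i(x), f_i(y))$. The lifted cone $(f_i \colon (Z, a_Z) \to (X_i, a_i))_{i \in I}$ is still jointly injective, hence a monocone in $\Cats{\V}$, so by the closure property proved above $(Z, a_Z)$ lies in $\Cats{\V}_\sep$. Since initiality with respect to $\Cats{\V} \to \SET$ implies initiality with respect to the full subcategory $\Cats{\V}_\sep \to \SET$, this provides the required initial lift in $\Cats{\V}_\sep$.

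Finally, for the left adjoint, I would build the separated reflection concretely. For $(X,a) \in \Cats{\V}$, define $x \sim y$ iff $k \le a(x,y)$ and $k \le a(y,x)$; reflexivity, symmetry, and transitivity follow from the two $\V$-category axioms. The key observation is that $a$ is invariant under $\sim$ in each variable: if $x \sim x'$ then
\[
  a(x', z) \ge a(x',x) \otimes a(x,z) \ge k \otimes a(x,z) = a(x,z),
\]
and symmetrically $a(x, z) \ge a(x', z)$, so $a(x, z) = a(x', z)$, with the analogous identity on the right. Hence $a$ descends to a well-defined $\bar{a} \colon (X/{\sim}) \times (X/{\sim}) \to \V$ which inherits the $\V$-category axioms from $a$ and is antisymmetric by construction. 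The projection $p \colon (X,a) \to (X/{\sim}, \bar{a})$ is a $\V$-functor and is universal among $\V$-functors from $(X,a)$ into separated $\V$-categories, since any such $f \colon (X,a) \to (Y,b)$ sends $\sim$-equivalent elements to equal elements (by the closure argument applied to the monocone $\{f\}$ when $f$ is taken modulo its image, or more directly by separation of $Y$), yielding the unique factorisation. The only mildly delicate point is the bi-variable invariance of $a$ under $\sim$, which is the calculation displayed above.
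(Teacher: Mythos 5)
Your proposal is correct and matches the paper's treatment: the paper states the closure and mono-topologicality claims as standard (without spelling out the element-chase you supply) and then constructs the left adjoint by exactly the same quotient $(X/{\sim},\widetilde{a})$ with $\widetilde{a}([x],[y])=a(x,y)$, whose well-definedness is the same bi-variable invariance calculation you highlight. The only cosmetic caveat is that your parenthetical ``monocone $=$ jointly injective'' should be justified by $\Cats{\V}$ being topological over $\SET$ (so one can test with discrete one-point objects), not by concreteness alone.
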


Let us describe the left adjoint $S \colon \Cats{\V}\to \Cats{\V}_\sep$ of
$\Cats{\V}_\sep\hookrightarrow\Cats{\V}$. To do so, consider a $\V$-category
$(X,a)$. Then
\begin{equation*}
  x\sim y \qquad\text{whenever}\qquad x\le y\quad\text{and}\quad y\le x
\end{equation*}
defines an equivalence relation on $X$, and the quotient set $X/{\sim}$ becomes
a $\V$-category $(X/{\sim},\widetilde{a})$ by putting
\begin{equation}\label{d:eq:4}
  \widetilde{a}([x],[y])=a(x,y);
\end{equation}
this is indeed independent of the choice of representants of the equivalence
classes. Then the projection map
\begin{equation*}
  q_{(X,a)} \colon X \longrightarrow X/{\sim},\,x\longmapsto[x]
\end{equation*}
is a $\V$-functor $q_{(X,a)} \colon (X,a)\to(X/{\sim},\widetilde{a})$, it is
indeed the unit of this adjunction at $(X,a)$. Furthermore, by \eqref{d:eq:4},
$q_{(X,a)} \colon (X,a)\to(X/{\sim},\widetilde{a})$ is a universal quotient and
initial with respect to $\Cats{\V}\to\SET$.

\begin{lemma}
  \label{p:2}
  A cone $(f_i \colon (X,a)\to (X_i,a_i))_{i\in I}$ in $\Cats{\V}_\sep$ is
  initial with respect to $\Cats{\V}_\sep\to\SET$ if and only if
  \begin{equation}
    \label{p:5}
    a(x,y)=\bigwedge_{i\in I}a_i(f_i(x),f_i(y)),
  \end{equation}
  for all $x,y\in X$.
\end{lemma}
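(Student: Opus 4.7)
The lemma has two directions. The reverse implication is straightforward: if the formula holds, then $(X,a)$ is in fact the $\Cats{\V}$-initial lift of the $\SET$-cone (by Theorem~\ref{p:22}), and for any separated $(Y,b)$ with a function $g\colon Y\to X$ such that each $f_i\cdot g$ is a $\V$-functor, we have $b(y,z)\le a_i(f_ig(y),f_ig(z))$ for every $i$. Taking the infimum over $i$ and using the formula yields $b(y,z)\le a(g(y),g(z))$, so $g$ is a $\V$-functor.

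For the forward direction, set $a'(x,y):=\bigwedge_{i\in I} a_i(f_i(x),f_i(y))$. Since each $f_i\colon(X,a)\to(X_i,a_i)$ is a $\V$-functor, $a\le a'$ pointwise. The reverse inequality must be extracted from initiality, and the plan is to probe $a$ at a prescribed pair $(x,y)$ using a carefully chosen test object in $\Cats{\V}_\sep$. Fix $x,y\in X$ and consider $Z=\{0,1\}$ with
\[
c(0,0)=c(1,1)=k,\qquad c(0,1)=a'(x,y),\qquad c(1,0)=\bot.
\]
A routine check shows $(Z,c)$ is a $\V$-category: the transitivity triangles involving $c(1,0)=\bot$ collapse via $\bot\otimes(-)=\bot\le(\,\cdot\,)$, while the remaining ones reduce to $a'(x,y)\otimes k=a'(x,y)\le a'(x,y)$. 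Assuming $\V$ is non-trivial so that $\bot\not\ge k$, the underlying order satisfies $1\not\le 0$, hence $(Z,c)$ is separated; the trivial-quantale case makes the lemma vacuous.

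Define $g\colon Z\to X$ by $g(0)=x$, $g(1)=y$. For each $i\in I$, the composite $f_i\cdot g\colon(Z,c)\to(X_i,a_i)$ is a $\V$-functor because $c(0,1)=a'(x,y)\le a_i(f_i(x),f_i(y))$ by the defining infimum, while the remaining entries are either $k$ or $\bot$ and impose no constraint. The hypothesised initiality of the cone in $\Cats{\V}_\sep$ then forces $g\colon(Z,c)\to(X,a)$ to be a $\V$-functor, giving $a'(x,y)=c(0,1)\le a(g(0),g(1))=a(x,y)$.

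The main obstacle is keeping the test $\V$-category separated: a symmetric two-point structure carrying the value $a'(x,y)$ on both off-diagonal entries would in general fail to be anti-symmetric as soon as $k\le a'(x,y)$. The trick is to break the symmetry by setting $c(1,0)=\bot$, which for any non-trivial $\V$ preserves separatedness while still encoding $a'(x,y)$ as the single off-diagonal value $c(0,1)$, so that initiality extracts exactly the inequality $a'(x,y)\le a(x,y)$.
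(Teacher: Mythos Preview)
Your proof is correct and follows essentially the same approach as the paper: both use the two-point test $\V$-category with $c(0,1)=u$ (your $a'(x,y)$), $c(1,0)=\bot$, and diagonal $k$, then invoke initiality to extract $u\le a(x,y)$. The only cosmetic difference is the case split ensuring separatedness: the paper isolates the case $u=\bot$ (trivial) and then assumes $\bot<u$, which implicitly forces $\V$ non-trivial; you instead isolate the trivial-quantale case directly and observe that for non-trivial $\V$ the test object is separated regardless of the value $a'(x,y)$.
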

\begin{proof}
  Clearly, if \eqref{p:5} is satisfied then
  $(f_i \colon (X,a)\to (X_i,a_i))_{i\in I}$ is initial with respect to
  $\Cats{\V}_\sep \to \SET$ since it is initial with respect to
  $\Cats{\V} \to \SET$.  Suppose now that
  $(f_i \colon (X,a)\to (X_i,a_i))_{i\in I}$ is initial with respect to
  $\Cats{\V}_\sep \to \SET$.  Fix $x,y\in X$. Then
  \[
    a(x,y) \leq \bigwedge_{i\in I}a_i(f_i(x),f_i(y))=u
  \]
  because $f_i \colon (X,a) \to (X_i, a_i)$ is a $\V$-functor for every
  $i \in I$. It is left to show that $u\le a(x,y)$. This is certainly true if
  $u=\bot$; assume now that $\bot<u$. Let $2_u$ be the separated $\V$-category
  with underlying set $\{0,1\}$ and structure $a_u$ defined by
  \begin{equation*}
    a_u(0,1)=u,\quad a_u(0,0)=a_u(1,1)=k,\quad \text{and}\quad a_u(1,0)=\bot.
  \end{equation*}
  Consider $h \colon \{0,1\}\to X$ with $h(0)=x$ and $h(1)=y$. Then $f_i\cdot h$
  is a $\V$-functor, for every $i\in I$. Hence, since
  $(f_i \colon (X,a)\to (X_i,a_i))_{i\in I}$ is initial, $h \colon 2_u\to X$ is
  a $\V$-functor, which implies $u\le a(x,y)$.
\end{proof}

\begin{corollary}\label{d:cor:4}
  The functor $\ftS \colon \Cats{\V}\to \Cats{\V}_\sep$ preserves initial cones
  with respect to the canonical forgetful functors.
\end{corollary}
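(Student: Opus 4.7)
The plan is to reduce the claim, via Lemma~\ref{p:2}, to a direct computation of infima and apply the initial-lift formula from Theorem~\ref{p:22}. Suppose $(f_i \colon (X,a) \to (X_i, a_i))_{i \in I}$ is an initial cone in $\Cats{\V}$ with respect to the forgetful functor to $\SET$. By Theorem~\ref{p:22}, this means
\[
  a(x,y) = \bigwedge_{i \in I} a_i(f_i(x), f_i(y)) \quad \text{for all } x,y \in X.
\]
Applying $\ftS$ yields a cone $(\ftS f_i \colon (X/{\sim}, \widetilde{a}) \to (X_i/{\sim_i}, \widetilde{a_i}))_{i \in I}$ in $\Cats{\V}_\sep$. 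Here I first need to check that $\ftS f_i$ is well defined on equivalence classes, which follows from the fact that every $\V$-functor preserves the underlying order, so $x \sim y$ implies $f_i(x) \sim_i f_i(y)$, and hence $\ftS f_i([x]) = [f_i(x)]$.

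Next, by Lemma~\ref{p:2}, to show the cone $(\ftS f_i)_{i \in I}$ is initial with respect to $\Cats{\V}_\sep \to \SET$, it suffices to verify that
\[
  \widetilde{a}([x], [y]) = \bigwedge_{i \in I} \widetilde{a_i}\bigl(\ftS f_i([x]), \ftS f_i([y])\bigr)
\]
for all $x,y \in X$. Unfolding the definition~\eqref{d:eq:4} of $\widetilde{a}$ and $\widetilde{a_i}$ gives $\widetilde{a}([x],[y]) = a(x,y)$ and $\widetilde{a_i}([f_i(x)], [f_i(y)]) = a_i(f_i(x), f_i(y))$, so the required identity is precisely the initial-lift formula above.

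There is essentially no hard step here; the only thing to be attentive to is that the characterisation in Lemma~\ref{p:2} is phrased for $\Cats{\V}_\sep \to \SET$ rather than $\Cats{\V} \to \SET$, which is exactly what we need, since we are reasoning entirely inside $\Cats{\V}_\sep$. The corollary therefore drops out from putting Theorem~\ref{p:22}, the definition of $\ftS$, and Lemma~\ref{p:2} side by side.
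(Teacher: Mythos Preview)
Your proof is correct and follows essentially the same approach as the paper: both arguments unfold the definition of $\widetilde{a}$ via~\eqref{d:eq:4}, invoke the initial-lift formula of Theorem~\ref{p:22}, and conclude using the characterisation of initial cones in Lemma~\ref{p:2}. Your version is slightly more explicit (e.g.\ checking well-definedness of $\ftS f_i$ on equivalence classes), but the structure and the key ingredients are identical.
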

\begin{proof}
  Let $(f_i \colon (X,a)\to (X_i,a_i))_{i\in I}$ be an initial cone with respect
  to $\Cats{\V} \to \SET$.  Then, for every
  $[x],[y] \in \ftS (X,a)=(X/{\sim},\widetilde{a})$, and with
  $\ftS (X_i,a_i)=(X/{\sim},\widetilde{a}_i)$ for all $i\in I$,
  \[
    \widetilde{a}([x],[y]) = a(x,y)
                           = \bigwedge_{i\in I} a_i(f_i(x),f_i(y))
                           = \bigwedge_{i\in I} \widetilde{a_i}([f_i(x)],[f_i(y)])
                           = \bigwedge_{i\in I} \widetilde{a_i}(\ftS f_i([x]),\ftS f_i([y])).
  \]
  Therefore, the claim follows by Lemma~\ref{p:2}.
\end{proof}

\begin{remark}\label{d:rem:5}
  In \cite{CHR20} it is shown that $\ftS \colon \Cats{\V}\to \Cats{\V}_\sep$
  preserves finite products. However, $\ftS$ does not preserve limits in
  general, in particular, $\ftS$ does not preserve codirected limits. For
  instance, consider the ``empty limit'' of \cite{Wat72} and equip every $X_i$
  ($i\in I$) with the indiscrete $\V$-category structure $a_i$ where
  $a_i(x,y)=\top$ for all $x,y\in X_i$. Then $S(X_i,a_i)$ has exactly one
  element, for each $i\in I$; hence the limit of the corresponding diagram in
  $\Cats{\V}_\sep$ has one element.
\end{remark}

\newcommand{\etalchar}[1]{$^{#1}$}

\end{document}